\newtheorem{lemma}{Lemma}[section]
\newtheorem{thm}{Theorem}[section]
\newcommand{\GooG}[1]{}
\newcommand{\bx}{\boldsymbol{x}}
\newcommand{\bX}{\boldsymbol{X}}
\newcommand{\brho}{{\rho}}
\newcommand{\bmu}{\boldsymbol{\mu}}
\def\P{\mathbb{P}}
\def\E{E}
\title{Bootstrap inference for the finite population total under complex sampling designs}
\author{Zhonglei Wang\thanks{Wang Yanan Institute for Studies in Economics and School of Economics,
		Xiamen University, Xiamen, Fujian 361005, P.R.C.}  \and Jae Kwang Kim\thanks{Department of Statistics, Iowa State University, Ames, IA 50011, U.S.A.; Email: jkim@iastate.edu} \and Liuhua Peng\thanks{School of Mathematics and Statistics, the University of Melbourne, Victoria 3010, Australia}}
\date{}
\begin{document}
\doublespacing
\maketitle
\begin{abstract}
	Bootstrap is a useful tool for making statistical inference, but it may provide erroneous results  under complex survey sampling. Most studies about bootstrap-based inference are developed under  simple random sampling and stratified random sampling. In this paper, we propose a unified bootstrap method applicable to some complex sampling designs, including Poisson sampling  and  probability-proportional-to-size sampling. Two main features of the proposed bootstrap method  are that studentization is used to make inference, and the finite  population is bootstrapped based on a multinomial distribution by incorporating the sampling information. We show that the proposed bootstrap method is second-order accurate using the Edgeworth expansion. Two simulation studies are conducted to compare the proposed bootstrap method with the  Wald-type method, which is widely used in survey sampling. Results show that the proposed bootstrap method is better in terms of coverage rate especially when sample size is limited.
	
	Keywords: Confidence interval, Edgeworth expansion, Multinomial distribution, Second-order accurate.
\end{abstract}
%
%\begin{keyword}[class=MSC]
%\kwd[Primary ]{62D05}
%\kwd[; secondary ]{62F40}
%\end{keyword}
%
%\begin{keyword}
%\kwd{Confidence interval}
%\kwd{Edgeworth expansion}
%\kwd{Multinomial distribution}
%\kwd{Second-order accurate}
%\end{keyword}
%
%\end{frontmatter}

\section{Introduction}
%Survey sampling has become popular inside and outside government agencies \citep{fuller2009sampling}, and one of its major tasks  is to make statistical inference for  population quantities based on the sample.
%Asymptotic results are investigated for various sampling designs, and are often used for inference. \citet{binder1983variances} studied the asymptotic design-based sampling distribution of the  population quantities of interest based on a Taylor expansion, and \citet{rao1998quasi}  used these theoretical results for a hypothesis test problem. \citet{shao1994statistics} discussed the asymptotic properties for the L-statistics under complex survey. \citet{fuller2009sampling} showed a variety of central limit theorems for different estimators and sampling designs.  \citet{shu2017} discussed the multiple imputation under Bayesian settings by using the asymptotic normal as the sampling distribution for the parameter of interest.
%Except for the asymptotic result, the empirical likelihood is also a widely used tool for making inference in the survey sampling settings \citep{wang2002empirical1,wang2002empirical,wu2006pseudo}.
%Although the asymptotic results are well studied, the asymptotic distributions are cumbersome to obtain especially for complex sampling designs.

Bootstrap, first proposed by  \citet{efron1979},  is a simulation-based approach for  accessing uncertainty of estimates and for constructing confidence intervals. Bootstrap is widely used in that    it is easy to implement and is second-order accurate under mild conditions \citep[\S3.3]{hall2013bootstrap}.
However,   classical bootstrap methods are not applicable under most sampling designs since the independent or identical distributed assumption may fail.

Under complex sampling, bootstrap methods   have been proposed to handle variance estimation. In survey sampling, one of the most  popular bootstrap approaches  is the rescaling bootstrap method proposed by \citet{rao1988resampling} under stratified random sampling, and they demonstrated that their bootstrap-$t$ intervals are second-order accurate if the variance component is \emph{known}. Such a variance, however, is seldom known in practice. \citet{rao1992} generalized the rescaling bootstrap method to cover the non-smooth statistics, but they did not discuss the second-order accuracy. \citet{sitter1992resampling} considered a mirror-match bootstrap method for sampling designs without replacement and discussed the second-order accuracy based on the \emph{known} population variance as \citet{rao1988resampling}.  \citet{sitter1992comparing} extended the without-replacement bootstrap method \citep{gross1980median} to complex sampling designs and compared the proposed method with the rescaling bootstrap method \citep{rao1988resampling} and the mirror-match bootstrap method \citep{sitter1992resampling}.  \citet{shao1996bootstrap} proposed a bootstrap method for the case when survey data are subject to missingness.  
\citet{soton38498} proposed to use a multinomial distribution to reconstruct the finite population to estimate the mean square error.
\citet{beaumont2012} proposed a generalized bootstrap method   for variance estimation under Poisson sampling. 
\citet{antal2011direct} proposed one-one resampling methods to estimate the variance for some complex sampling designs. 
 \citet{mashreghi2016} gave a comprehensive overview of the bootstrap methods in survey sampling for variance estimation.

%\textcolor{green}{ 
%Asymptotic distributions are widely used to make inference in survey sampling.  
%\citet{fuller2009sampling} summarized   central limit theorems for some sampling designs.  \citet{Wang2018Approximate} proposed  a unified Bayesian approach to make inference under complex sampling designs, and their work is based on the asymptotic normality.  \citet{rao1984chi} and \citet{rao1998quasi} proposed to use scaled asymptotic chi-square distributions to test statistical hypotheses.  However, inference using asymptotic distributions under some sampling designs may be questionable when the sample size is limited.} 

 In survey sampling, the literature on bootstrap-based approaches for interval estimation is very limited.  \citet{BickelandFreedman1984}  first considered interval estimation under  stratified random sampling. \citet{booth1994bootstrap} generalized the method of \citet{BickelandFreedman1984} to show  that the constructed confidence interval for a smooth function of the  finite population mean is second-order accurate.    {However, all of the theoretical results, including that of \citet{rao1988resampling} are restricted to stratified random sampling. Although \citet{beaumont2012} discussed a generalized bootstrap method for survey sampling with special attention to Poisson sampling, they did not provide rigorous results for the second-order accuracy of their methods. }

%We propose  new bootstrap methods applicable to some commonly used sampling designs, including Poisson sampling, simple random sampling (SRS) and probability proportional to size (PPS) sampling.  For each proposed bootstrap method,
%a multinomial distribution is used to generate the bootstrap  finite population, from which the same sampling design  is conducted to  obtain a bootstrap  sample. 

In this paper, we focus on  interval estimation under complex sampling. The goal of this study is to develop a unified  bootstrap method to approximate the sampling distribution of the design-based estimator  under some popular sampling designs, including Poisson sampling, simple random sampling (SRS) and probability-proportional-to-size (PPS) sampling.  The proposed bootstrap methods apply multinomial distributions to generate the bootstrap finite populations by incorporating the sampling information, and the same sampling design  is conducted to  obtain a bootstrap  sample from each bootstrap finite population. A similar idea has been successfully applied to SRS by \citet{gross1980median} and \citet{chao1985bootstrap}.  Our bootstrap methods differ from that proposed by \citet{soton38498} in the sense that the  finite population is iteratively bootstrapped, and an asymptotically pivotal statistic is used to make statistical inference for the finite population total. We also study the theoretical properties of the proposed bootstrap methods for different sampling designs using the Edgeworth expansion. We summarize our contributions in this paper below:
\begin{enumerate}
\item We have proposed a unified bootstrap method for interval estimation under some popular complex sampling designs, including Poisson sampling, SRS and PPS sampling. A simulation study also confirms that the proposed method works even under two-stage cluster sampling.
\item For three commonly used sampling designs, we have provided a rigorous proof for the second-order accuracy of the proposed bootstrap methods and shown that the estimation error is $o_p(n^{-1/2})$ \citep{diciccioromano1995}  under mild conditions.  Wald-type method is widely used in survey sampling, so the proposed bootstrap method is an important contribution  since it provides more accurate inference compared with the Wald-type method under mild conditions. Besides, to our knowledge, we are the first to provide the Edgeworth expansion of a \emph{studentized} estimator under Poisson sampling.%The proof involves the Edgeworth expansion for two , which was seldom used in the literature of survey sampling.
\end{enumerate}

The remaining part of the paper is organized as follows. Sampling designs and design-based estimators under consideration are briefly reviewed in Section 2. In the following three sections, we propose  bootstrap methods for Poisson sampling, SRS and PPS sampling, respectively, and theoretical properties are also investigated. Two simulation studies are conducted to compare the proposed bootstrap method with the Wald-type method  in Section 6. Some concluding remarks are made in Section 7.

\section{Sampling designs and estimates}

In survey sampling, the  finite population is often assumed to be fixed, and the randomness is due to the sampling design.
Let $ \mathcal{F}_N =\{ y_1, \ldots, y_N\}$ be  the finite  population of size $N$, and we are interested in making inference for the finite population total $Y=\sum_{i=1}^Ny_i$. For simplicity, we  assume that  the elements in $\mathcal{F}_N$  are scalers. To avoid unnecessary details, we also assume that the    population size $N$ is known, so it is equivalent to make statistical inference for the finite population mean  $\bar{Y}=N^{-1}Y$.

We  consider three commonly used sampling designs, including Poisson sampling, SRS and PPS sampling. For without-replacement sampling designs, such as Poisson sampling and SRS, $I_i$ is the sampling indicator with $I_i=1$ indicating that the $i$-th element is in the sample and 0 otherwise, and $\pi_i = E(I_i)$ is  the first-order inclusion probability of the $i$-th element for $i=1,\ldots,N$, where   the expectation is taken with respect to the sampling design. Let $\Pi_N = \{\pi_1,\ldots,\pi_N\}$ be the set of first-order inclusion probabilities, and it is assumed to be known.  Poisson sampling generates a sample based on $N$ independent Bernoulli experiments, one for each element in the finite  population. That is, $I_i\sim \mathrm{Ber}(\pi_i)$ for $i=1,\ldots,N$, where $\mathrm{Ber}(\pi_i)$ is a Bernoulli distribution with  success probability $\pi_i\in(0,1)$, and a  sample is $\{y_i:I_i=1,i=1,\ldots,N\}$. Let $n=\sum_{i=1}^NI_i$ be a realized sample size and  $n_0= {E}(n) = \sum_{i=1}^N\pi_i$ be the expected sample size under Poisson sampling.    For SRS, a  without-replacement sample of size $n$ is selected with equal probabilities, and we can show  $\pi_i=nN^{-1}$ for $i=1,\ldots,N$ under SRS.  Denote  $\hat{Y}_{Poi}=\sum_{i=1}^N  y_i\pi_i^{-1}I_i $ to be the Horvitz-Thompson estimator \citep{horvitz1952generalization} of $Y$ under Poisson sampling, and the corresponding one is $\hat{Y}_{SRS} = \sum_{i=1}^N  y_i\pi_i^{-1}I_i =Nn^{-1}\sum_{i=1}^NI_iy_i$ under SRS. The sample size $n$ is  random under Poisson sampling, but it is fixed under SRS. Without loss of generality, assume that the first
$n$ elements are sampled under Poisson sampling or SRS, and the design-unbiased variance estimators are  $\hat{V}_{Poi} = \sum_{i=1}^ny_i^2(1-\pi_i)\pi_i^{-2}$ and $\hat{V}_{SRS} = N(N-n)n^{-1}s_{SRS}^2$, respectively, where $s_{SRS}^2=n^{-1}\sum_{i=1}^n(y_i-\bar{y})^2$  is the sample variance of $\{y_1,\ldots,y_n\}$, and $\bar{y} = n^{-1}\sum_{i=1}^ny_i$.

PPS sampling generates a sample of size $n$ by  independently and identically selecting an element from $\mathcal{F}_N$  $n$ times with selection probabilities $\{p_i: i=1,\ldots,N\}$, where   $p_i\in(0,1)$ is  the \emph{known} selection probability of  $y_i$ for $i=1,\ldots,N$ and $\sum_{i=1}^Np_i =1$.  Replicates may occur in the sample under PPS sampling, and the population total $Y$ is estimated by the Hansen--Hurwitz estimator \citep{HansenHurvitzs194}, which is denoted as $\hat{Y}_{PPS} = n^{-1} \sum_{i=1}^n Z_i$, where $Z_i = p_{a,i}^{-1}y_{a,i}$, $p_{a,i}=p_k$ and $y_{a,i} = y_k$ if $a_i=k$, and $a_i$ is the index of the selected element for the $i$-th draw. A  design-unbiased variance estimator is $\hat{V}_{PPS} = n^{-2}\sum_{i=1}^n(Z_i-\hat{Y}_{PPS})^2$.   %For PPS sampling, we assume that there exist at least two distinguished values in the set $\{p_i^{-1}y_i:i=1,\ldots,N\}$.

Throughout the paper, assume that  the (expected) sample size is less than the population size. Since we study a sequence of finite populations and inclusion probabilities in the following three sections,  assume that $y_i$ and $\pi_i$ are indexed by $N$ implicitly, and samples are generated independently for different finite populations. We use the notation ``$a_n\asymp b_n$'' to indicate that $a_n$ and $b_n$ have the same asymptotic order. That is,  $a_n\asymp b_n$ is equivalent to $a_n=O(b_n)$ and $b_n=O(a_n)$.
%\emph{Remark 1}. It is trivial when the design-based variance estimator equals to 0 for a sample, and we would not consider such a  case in this paper.
\section{Bootstrap method for Poisson sampling} \label{sec: basic setup}
We propose the following bootstrap method to approximate the sampling distribution of $T_{Poi} = \hat{V}_{Poi}^{-1/2}( \hat{Y}_{Poi} - Y)$ under Poisson sampling.

\begin{enumerate}
	\item[Step 1.] Based on the sample $\{y_1,\ldots,y_n\}$, generate $(N_1^*,  \ldots, N_n^*)$ from a multinomial distribution $\mathrm{MN}(N; {\brho})$ with $N$ trials and a probability vector $\brho$, where $\brho=(\rho_1, \cdots, \rho_n)$ and
	$$\rho_i = \frac{\pi_i^{-1}}{ \sum_{j=1}^n \pi_j^{-1} }$$
	for $i=1,\ldots,n$. Denote $\mathcal{F}_N^*=\{y_1^*,\ldots,y_N^*\}$ and  $\Pi_N^* = \{\pi_1^*,\ldots,\pi_N^*\}$, and they consist of $N_i^*$ copies of $y_i$ and $\pi_i$, respectively. Let the bootstrap  finite population total  be  $Y^* = \sum_{i=1}^Ny_i^* = \sum_{i=1}^nN_i^*y_i$.
	\item[Step 2.] For $i=1,\cdots, n$, generate $m_i^*$ independently from a binomial distribution $\mathrm{Bin} (N_i^*, \pi_i)$ with $N_i^*$ trials and a success probability $\pi_i$.  The bootstrap sample consists of $m_i^*$ replicates of $y_i$ under Poisson sampling. Denote $\hat{Y}_{Poi}^* = \sum_{i=1}^nm_i^*y_i\pi_i^{-1}$ and $T_{Poi}^* = (\hat{V}_{Poi}^*)^{-1/2}( \hat{Y}_{Poi}^* - Y^*)$, where $\hat{V}_{Poi}^*=\sum_{i=1}^{n}m_i^*y_i^2(1-\pi_i)\pi_i^{-2}$ is the bootstrap  variance estimator.
	\item[Step 3.] Repeat the two steps above independently  $M$ times.
\end{enumerate}

Step 1 corresponds to generating a bootstrap finite population $\mathcal{F}_N^*$ and  bootstrap first-order inclusion probabilities $\Pi_N^*$ by incorporating the sampling information.   Based on $\mathcal{F}_N^*$ and $\Pi_N^*$,
Step 2 is used to generate a bootstrap sample,  from which a bootstrap replicate of $T_{Poi}$ is obtained.  Instead of sampling from the bootstrap finite population $\mathcal{F}^*_N$ directly, Step 2 provides a more efficient way to generate a sample using $N^*_1,\ldots,N^*_n$ under Poisson sampling. In Step 2, we center $T_{Poi}^*$ by the bootstrap population total $Y^*$ not by $\hat{Y}_{Poi}$. The reason is that  the finite population should be fixed, and the randomness is due to Poisson sampling. Thus, the statistic should be centered using the corresponding population total $Y^*$. If we center $T_{Poi}^*$ by $\hat{Y}_{Poi}$, it causes additional variability due to generating different bootstrap finite populations. The same argument applies for the other two sampling designs. We use the empirical distribution of $T_{Poi}^*$ to approximate that of $T_{Poi}$ and make inference for $Y$.

Before discussing the theoretical properties of the proposed bootstrap method,
we introduce some mild conditions on $ \mathcal{F}_N$ and $\Pi_N$.
\begin{enumerate}
	\renewcommand{\labelenumi}{(C\arabic{enumi})}
	\item There exist constants $\alpha\in(2^{-1},1]$ and $0<C_1\leq C_2$ such that $n_0\asymp N^{\alpha}$, and  $\pi_i$  satisfies
	\begin{eqnarray}
		\notag &C_1 \leq {n_0}^{-1}{N}\pi_i \leq C_2
	\end{eqnarray}
	for $i=1,\ldots,N$. \label{cond: C1poi}
	
	%	\item The adjusted variance satisfies
	%	\begin{equation}
	%		\displaystyle\lim_{N\to\infty}\frac{n_0}{N^2}\sum_{i=1}^N{y_i^2{\pi_i}^{-1}(1-\pi_i)} = \sigma_1^2,\notag% \label{eq: condition 2 poi 2}
	%	\end{equation}
	%	where $\sigma_1$ is a positive constant with respect to $N$. \label{cond: poi variance bounded}
	\item The sequence of finite populations and first-order inclusion probabilities satisfy
	\begin{equation*}
		\lim_{N\to\infty}(n_0N^{-2} {V}_{Poi})= \sigma_1^2,
	\end{equation*}
	where ${V}_{Poi} = \sum_{i=1}^Ny_i^2(1-\pi_{i})\pi_{i}^{-1}$, and $\sigma_1^2$ is a positive constant.\label{con: variance}
	\item  The following condition holds for finite populations, that is, 
	\begin{equation*}
	\lim_{N\to \infty}N^{-1}\sum_{i=1}^N y_i^{8}= C_3,
	\end{equation*}
	where  $C_3$ is a positive constant. \label{cond: C4poi}
	\item Denote $X_{i}=V_{Poi}^{-1/2}y_{i}\pi_{i}^{-1}(I_{i}-\pi_{i})$ for $i=1,\ldots,N$, and let $m=\lfloor n_0^{-1/2}N/(\log n_0)\rfloor$ be the integer part of $n_0^{-1/2}N/(\log n_0)$. Then, there exist constants $t_0>0$ and $a>2$ such that, for any subset $\{X_{\ell_1},\ldots,X_{\ell_m}\}$ of $\{X_{1},\ldots,X_{N}\}$,
	$$\left|\prod_{i=1}^{m}\E\{\exp(\iota tX_{\ell_i})\mid \mathcal{F}_N\}\right|=O(m^{-a})$$ uniformly in $|t|>t_0>0$, where $\iota$ is the imaginary unit.
	\label{cond: Pois 5}\label{cond: extra poi}
\end{enumerate}

We briefly comment on these conditions. Condition (C\ref{cond: C1poi}) is commonly used in survey sampling \citep{fuller2009sampling}; the first part of (C\ref{cond: C1poi}) is a mild restriction on the expected sample size, and the second part regulates the first-order inclusion probabilities.  Condition (C\ref{con: variance})  rules out the degenerate case of the Horvitz--Thompson estimator under Poisson sampling. The  moment condition  in  (C\ref{cond: C4poi})  guarantees  the convergence of the variance estimators and other quantities, and it is also required for SRS and PPS sampling that we will discuss in the following two sections. To illustrate the existence of $\mathcal{F}_N$ and $\Pi_N$ satisfying (C\ref{cond: C1poi}) and (C\ref{con: variance}) simultaneously, consider $\pi_{i} = n_0N^{-1}$, so (C\ref{cond: C1poi}) holds, where $n_0 = \lfloor N^{2/3}\rfloor$ and $C_1=C_2=1$, for example. Then, we have
$
	\lim_{N\to\infty}(n_0N^{-2} {V}_{Poi}) =\lim_{N\to\infty}{N}^{-1}\sum_{i=1}^Ny_i^2\left(1-n_0N^{-1}\right).
$ If $N^{-1}\sum_{i=1}^Ny_i^2$ converges as $N\to\infty$, then (C\ref{con: variance})  holds.  Condition (C\ref{cond: extra poi})  is a counterpart of non-lattice assumption and is useful in deriving Edgeworth expansions. Specifically, for any subset $\{X_{\ell_1},\ldots,X_{\ell_m}\}$ of $\{X_{1},\ldots,X_{N}\}$, condition (C\ref{cond: extra poi}) %is used to show the consistency of the variance and the third central moment of the bootstrap  finite population under Poisson sampling.
ensures that $\{X_{\ell_i}\}_{i=1}^{m}$ have subsequences of length $O(\log m)$ with different spans; see \citet[\S 16.6]{feller2008introduction} for more discussion on a similar assumption.

%\begin{remark}
%	Condition (C\ref{con: variance}) is used to bound the scaled variance, and it can be weakened as
%	\begin{enumerate}
%		\item[(C2')] 	$
%		\liminf_{N\to\infty}(n_0N^{-2} {V}_{Poi})>0.
%		$\label{cond: C2}
%	\end{enumerate}
%Similar conditions for the other two sampling designs can be weakened as well.
%\end{remark}
Denote $(\mathcal{F}_N,\mathcal{B}_N,P_{N,Poi})$ to be a probability space, where  $\mathcal{B}_N$ and $P_{N,Poi}(\cdot)$ are the $\sigma$-algebra and the probability measure on $\mathcal{F}_N$ associated with Poisson sampling, respectively.  That is, $\mathcal{F}_N=\bigtimes_{i=1}^N\Omega_i$, $\mathcal{B}_N = \bigotimes_{i=1}^N\mathcal{A}_i$ and $P_{N,Poi}(A_1\times A_2\times\cdots\times A_N) = \prod_{i=1}^N \mu_{i}(A_i)$, where $\Omega_i = \{0,1\}$, $\mathcal{A}_i$ is the power set of $\Omega_i$, and $\mu_{i}(\{1\}) = 1-\mu_{i}(\{0\})= \pi_i$  for $i=1,\ldots,N$.  Let $\mathcal{F} = \bigtimes_{N=1}^\infty \mathcal{F}_N$ be the product space and $\mathcal{B} = \bigotimes_{N=1}^\infty \mathcal{B}_N$ be the product-$\sigma$-algebra; see \citet[\S14.1]{klenke2007probability} for details about the notations. By Corollary 14.33 of \citet{klenke2007probability}, there exists a uniquely determined probability measure $\mathbb{P}_{Poi}$ on $(\mathcal{F} ,\mathcal{B} )$ such that $\mathbb{P}_{Poi}(F_1\times F_2\times\cdots\times F_n\times \bigtimes_{N=n+1}^\infty \mathcal{F}_N) = \prod_{i=1}^n P_{i,Poi}$, where $F_i\in \mathcal{F}_i$ for $i=1,\ldots,n$ and $n\in\mathbb{N}$.
\begin{lemma} \label{lemma: almost sure variance poi}
	Suppose that (C\ref{cond: C1poi})--(C\ref{cond: C4poi}) hold. Then,
	\begin{eqnarray}
		%\frac{n_0}{N^2}(\hat{V}_{Poi} - {V}_{Poi})\to 0 \label{eq: lemma1 2}
		n_0N^{-2}(\hat{V}_{Poi} - {V}_{Poi})\to 0 \label{eq: lemma1 2}
	\end{eqnarray}
	as $N\to \infty$ almost surely $(\mathbb{P}_{Poi})$.
	
	Let $\mu^{(3)}_{Poi} = \sum_{i=1}^Ny_i^3(1-\pi_{i})\{(1-\pi_{i})^{2}\pi_{i}^{-2}-1\}$ and $\hat{\mu}^{(3)}_{Poi} = \sum_{i=1}^ny_i^3(1-\pi_{i})\pi_{i}^{-1}\{(1-\pi_{i})^{2}\pi_{i}^{-2}-1\}$. Then,
	\begin{equation}
		n_0^2N^{-3} {\mu}_{Poi}^{(3)} = O(1) \text{~~~and~~~} 	\frac{n_0^2}{N^3}(\hat{\mu}_{Poi}^{(3)} - {\mu}_{Poi}^{(3)})=O_p(n_0^{-1/2}). \label{eq: lemma1 1.2}
	\end{equation}
	
	In addition, denote $\tau_{Poi}^{(3)}=\sum_{i=1}^{N}y_{i}^3(1-\pi_{i})^2\pi_{i}^{-2}$ and $\hat{\tau}_{Poi}^{(3)}=\sum_{i=1}^{n}y_{i}^3(1-\pi_{i})^2\pi_{i}^{-3}$. Then,
	\begin{align}
 		\frac{n_0^2}{N^3} {\tau}_{Poi}^{(3)} = O(1)\text{~~~and~~~} \frac{n_0^2}{N^3}(\hat{\tau}_{Poi}^{(3)} - {\tau}_{Poi}^{(3)})=O_p(n_0^{-1/2}). \label{eq: lemma1 1}
	\end{align}
	\iffalse
	Furthermore, define $\Theta_{Poi}^{(2,3)} = \sum_{(i,j)\in\Gamma_N}y_{i}^2y_{j}^3(1-\pi_{i})(1-\pi_{j})^2\pi_{i}^{-1}\pi_{j}^{-2}$ and $\hat{\Theta}_{Poi}^{(2,3)} = \sum_{(i,j)\in\Gamma_n}y_{i}^2y_{j}^3(1-\pi_{i})(1-\pi_{j})^2\pi_{i}^{-2}\pi_{j}^{-3}$, where $\Gamma_N=\{(i,j): i,j=1,\ldots,N \text{~and~} i\neq j\}$ and $\Gamma_n=\{(i,j): i,j=1,\ldots,n \text{~and~} i\neq j\}$, then
	\begin{align}
		\frac{n_0^3}{N^5}\Theta_{Poi}^{(2,3)} = O(1)\text{~~~and~~~} \frac{n_0^3}{N^5}(\hat{\Theta}_{Poi}^{(2,3)} - \Theta_{Poi}^{(2,3)})=O_p(n_0^{-1}). \label{eq: lemma1 3}
	\end{align}
	\fi
\end{lemma}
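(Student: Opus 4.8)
The plan is to observe that $\hat{V}_{Poi}$, $\hat{\mu}^{(3)}_{Poi}$ and $\hat{\tau}^{(3)}_{Poi}$ are all Horvitz--Thompson estimators and to reduce every assertion to a moment bound for a sum of independent mean-zero random variables. Writing the sampling indicators as $I_1,\dots,I_N$, one has $\hat{V}_{Poi}=\sum_{i=1}^N I_i\pi_i^{-1}a_i$ with $a_i=y_i^2(1-\pi_i)\pi_i^{-1}$, $\hat{\mu}^{(3)}_{Poi}=\sum_{i=1}^N I_i\pi_i^{-1}g_i$ with $g_i=y_i^3(1-\pi_i)\{(1-\pi_i)^2\pi_i^{-2}-1\}$, and $\hat{\tau}^{(3)}_{Poi}=\sum_{i=1}^N I_i\pi_i^{-1}h_i$ with $h_i=y_i^3(1-\pi_i)^2\pi_i^{-2}$, whereas $V_{Poi}=\sum_i a_i$, $\mu^{(3)}_{Poi}=\sum_i g_i$ and $\tau^{(3)}_{Poi}=\sum_i h_i$; since $\E(I_i)=\pi_i$, each estimator is design-unbiased and each centred quantity is a sum of $N$ independent mean-zero terms $\pi_i^{-1}(I_i-\pi_i)a_i$ (and likewise with $g_i$ or $h_i$). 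Two elementary facts will be used repeatedly: (C\ref{cond: C1poi}) gives $\pi_i\asymp n_0/N$ uniformly in $i$, so $\pi_i^{-1}\asymp N/n_0$ and $1-\pi_i\le1$, and (C\ref{cond: C4poi}) together with the power-mean inequality gives $\sum_{i=1}^N|y_i|^p=O(N)$ for every $0<p\le8$.

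For the deterministic halves of \eqref{eq: lemma1 1.2} and \eqref{eq: lemma1 1} I would argue termwise: $|g_i|+|h_i|=O(|y_i|^3\pi_i^{-2})=O((N/n_0)^2|y_i|^3)$, so $|\mu^{(3)}_{Poi}|+|\tau^{(3)}_{Poi}|\le\sum_i(|g_i|+|h_i|)=O((N/n_0)^2\sum_i|y_i|^3)=O(N^3/n_0^2)$. For the stochastic halves, design-unbiasedness turns the second moment into a variance: $\E\{(\hat{\mu}^{(3)}_{Poi}-\mu^{(3)}_{Poi})^2\}=\sum_{i=1}^N\pi_i^{-2}\mathrm{Var}(I_i)\,g_i^2=\sum_{i=1}^N\pi_i^{-1}(1-\pi_i)g_i^2$, and using $g_i^2=O((N/n_0)^4y_i^6)$ and $\pi_i^{-1}=O(N/n_0)$ this is $O((N/n_0)^5\sum_i y_i^6)=O(N^6/n_0^5)$; hence $\mathrm{Var}\{n_0^2N^{-3}(\hat{\mu}^{(3)}_{Poi}-\mu^{(3)}_{Poi})\}=O(n_0^{-1})$ and Chebyshev's inequality gives the claimed $O_p(n_0^{-1/2})$. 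The computation for $\hat{\tau}^{(3)}_{Poi}$ is identical with $h_i$ in place of $g_i$.

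For the almost-sure statement \eqref{eq: lemma1 2}, write $S_N=\hat{V}_{Poi}-V_{Poi}=\sum_{i=1}^N W_i$ with $W_i=(I_i-\pi_i)y_i^2(1-\pi_i)\pi_i^{-2}$, which are independent with $\E(W_i)=0$. From $\E\{(I_i-\pi_i)^{2k}\}\le\pi_i$ (any integer $k\ge1$) one gets $\E(W_i^2)=O((N/n_0)^3y_i^4)$ and $\E(W_i^4)=O((N/n_0)^7y_i^8)$, hence $\sum_i\E(W_i^2)=O(N^4/n_0^3)$ and $\sum_i\E(W_i^4)=O(N^8/n_0^7)$. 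Expanding the fourth moment of a sum of independent mean-zero variables, $\E(S_N^4)=\sum_i\E(W_i^4)+3\sum_{i\ne j}\E(W_i^2)\E(W_j^2)\le\sum_i\E(W_i^4)+3\big(\sum_i\E(W_i^2)\big)^2=O(N^8/n_0^6)$, so $\E\{(n_0N^{-2}S_N)^4\}=O(n_0^{-2})=O(N^{-2\alpha})$. Since $\alpha>2^{-1}$ forces $\sum_N N^{-2\alpha}<\infty$, Markov's inequality gives $\sum_N\mathbb{P}_{Poi}(|n_0N^{-2}S_N|>\varepsilon)<\infty$ for every $\varepsilon>0$; the first Borel--Cantelli lemma then yields $\mathbb{P}_{Poi}(|n_0N^{-2}S_N|>\varepsilon\ \text{infinitely often})=0$, and intersecting over $\varepsilon=k^{-1}$, $k\in\mathbb{N}$, proves \eqref{eq: lemma1 2}.

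I expect the main obstacle to be this last, almost-sure, part: a second-moment bound yields only the non-summable rate $O(n_0^{-1})$, so one is forced to the fourth moment --- which is exactly why (C\ref{cond: C4poi}) is stated with eighth moments, and why the restriction $\alpha>2^{-1}$ in (C\ref{cond: C1poi}) is needed, since the exponent $2\alpha$ in $\sum_N N^{-2\alpha}$ must exceed $1$. The deterministic and $O_p$ assertions, by contrast, are routine second-moment computations.
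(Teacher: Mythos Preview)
Your proposal is correct and follows essentially the same approach as the paper: a fourth-moment bound combined with Borel--Cantelli for \eqref{eq: lemma1 2}, and a mean-plus-variance (Chebyshev/Markov) argument for the $O_p(n_0^{-1/2})$ claims in \eqref{eq: lemma1 1.2} and \eqref{eq: lemma1 1}. Your unified Horvitz--Thompson framing and the clean inequality $\E\{(I_i-\pi_i)^{2k}\}\le\pi_i$ streamline the bookkeeping slightly, but the mathematical content is the same.
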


Lemma \ref{lemma: almost sure variance poi} shows some basic properties of the finite population quantities and their design-based estimators. Specifically,  the ratio $\hat{V}_{Poi}^{-1}V_{Poi}\to 1$ almost surely $(\mathbb{P}_{Poi})$ by (C\ref{con: variance}). Under Poisson sampling, $\mu^{(3)}_{Poi}$ is the third central moment of $\hat{Y}_{Poi}$, and $\tau_{Poi}^{(3)}$ %and $\Theta_{Poi}^{(2,3)}$ are
is a quantity involved in the Edgeworth expansion of the distribution of $T_{Poi}$.

\begin{thm} \label{theorem: edgeworth expansion Poisson}
	 Assume that conditions  (C\ref{cond: C1poi})--(C\ref{cond: extra poi}) hold. Let $\hat{F}_{Poi}(z)=\mathbb{P}_{Poi}\left(T_{Poi}\leq z\right)$ be the cumulative distribution function of $T_{Poi}$ under Poisson sampling. Then,
	 \begin{equation}
	 \frac{\hat{\mu}_{Poi}^{(3)}}{\hat{V}_{Poi}^{3/2}} =O_p(n_0^{-1/2}) \text{~~~and~~~} \frac{\hat{\tau}_{N,Poi}^{(3)}}{\hat{V}_{N,Poi}^{3/2}} =O_p(n_0^{-1/2}). %\frac{\hat{\Theta}_{N,Poi}^{(2,3)}}{\hat{V}_{N,Poi}^{5/2}} =O_p(n_0^{-1/2}).
	 \label{eq: theorem 1 5}
	 \end{equation}
	 Furthermore,
	\begin{eqnarray}
 		& & \hat{F}_{Poi}(z) = \Phi(z)+\left\{\frac{\hat{\mu}_{N,Poi}^{(3)}}{6\hat{V}_{N,Poi}^{3/2}}(1-z^2)+\frac{\hat{\tau}_{N,Poi}^{(3)}}{2\hat{V}_{N,Poi}^{3/2}}z^2\right\}\phi(z) +o_p(n_0^{-1/2}) \label{eq: theorem 1 6}
	\end{eqnarray}
	% almost surely $(\mathbb{P}_{Poi})$ for
	uniformly in $z\in \mathbb{R}$, where $\Phi(z)$ is the cumulative distribution function of the standard normal distribution with the probability density function $\phi(z)$.
\end{thm}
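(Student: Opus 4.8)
The plan is to first dispatch \eqref{eq: theorem 1 5}, then reduce \eqref{eq: theorem 1 6} to a \emph{deterministic} Edgeworth expansion, and finally prove that expansion by realizing $T_{Poi}$ as a smooth function of a standardized sum of independent summands. The two bounds in \eqref{eq: theorem 1 5} follow at once from Lemma~\ref{lemma: almost sure variance poi} and (C\ref{con: variance}): the latter gives $V_{Poi}\asymp n_0^{-1}N^2$, and with \eqref{eq: lemma1 2} this gives $\hat V_{Poi}=\{1+o(1)\}V_{Poi}$ almost surely, so $\hat V_{Poi}^{3/2}$ is of exact order $n_0^{-3/2}N^3$; meanwhile \eqref{eq: lemma1 1.2}--\eqref{eq: lemma1 1} give $|\hat\mu_{Poi}^{(3)}|,|\hat\tau_{Poi}^{(3)}|=O_p(n_0^{-2}N^3)$, and dividing yields $O_p(n_0^{-1/2})$. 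The same lemma gives $\hat\mu_{Poi}^{(3)}=\mu_{Poi}^{(3)}+O_p(n_0^{-5/2}N^3)$ and likewise for $\hat\tau_{Poi}^{(3)}$, whence $\hat\mu_{Poi}^{(3)}/\hat V_{Poi}^{3/2}=\mu_{Poi}^{(3)}/V_{Poi}^{3/2}+o_p(n_0^{-1/2})$ and $\hat\tau_{Poi}^{(3)}/\hat V_{Poi}^{3/2}=\tau_{Poi}^{(3)}/V_{Poi}^{3/2}+o_p(n_0^{-1/2})$. Since $\hat F_{Poi}$ is nonrandom (the finite population is fixed) while the bracketed coefficient in \eqref{eq: theorem 1 6} is random only through these hatted quantities, and $\phi(z),(1-z^2)\phi(z),z^2\phi(z)$ are bounded in $z$, proving \eqref{eq: theorem 1 6} is equivalent to establishing
\begin{equation*}
\mathbb{P}_{Poi}(T_{Poi}\le z)=\Phi(z)+\left\{\frac{\mu_{Poi}^{(3)}}{6V_{Poi}^{3/2}}(1-z^2)+\frac{\tau_{Poi}^{(3)}}{2V_{Poi}^{3/2}}z^2\right\}\phi(z)+o(n_0^{-1/2})
\end{equation*}
uniformly in $z\in\mathbb{R}$, with a \emph{deterministic} remainder.

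The structural fact that makes this tractable is that, under Poisson sampling, \emph{both} $\hat Y_{Poi}-Y=\sum_{i=1}^N y_i\pi_i^{-1}(I_i-\pi_i)$ and $\hat V_{Poi}-V_{Poi}=\sum_{i=1}^N y_i^2(1-\pi_i)\pi_i^{-2}(I_i-\pi_i)$ are linear in the independent, mean-zero variables $I_i-\pi_i$. With $X_i$ as in (C\ref{cond: extra poi}), so that $S_{Poi}:=V_{Poi}^{-1/2}(\hat Y_{Poi}-Y)=\sum_{i=1}^N X_i$, set $c_N=\{\sum_{j=1}^N y_j^4(1-\pi_j)^3\pi_j^{-3}\}^{1/2}$ and $W_i=c_N^{-1}y_i^2(1-\pi_i)\pi_i^{-2}(I_i-\pi_i)$. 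Then $\bW_N:=\bigl(S_{Poi},\sum_{i=1}^N W_i\bigr)$ is a normalized sum of $N$ independent (given $\mathcal{F}_N$) mean-zero random vectors whose covariance matrices are $O(1)$ and bounded away from singularity along the sequence (by (C\ref{con: variance}) and (C\ref{cond: C4poi})), and
\begin{equation*}
T_{Poi}=S_{Poi}\Bigl(1+\delta_N\sum_{i=1}^N W_i\Bigr)^{-1/2},\qquad \delta_N:=\frac{c_N}{V_{Poi}}\asymp n_0^{-1/2}.
\end{equation*}
Thus $T_{Poi}$ is a smooth function of $\bW_N$ and of the small parameter $\delta_N$. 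The plan is to (i) establish a one-term joint Edgeworth expansion for $\bW_N$ with remainder $o(n_0^{-1/2})$, uniform over a class of Borel sets containing the level sets of $(s,w)\mapsto s(1+\delta_N w)^{-1/2}$, and (ii) transfer it to $T_{Poi}$ by the delta method for Edgeworth expansions of smooth functions of sums \citep[see][]{hall2013bootstrap}.

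For (ii), since $g(s,w)=s(1+\delta_N w)^{-1/2}$ satisfies $\partial_s g(0,0)=1$ and $\partial_w g(0,0)=0$, Taylor expansion gives $T_{Poi}=S_{Poi}-\tfrac12\delta_N S_{Poi}\sum_i W_i+O_p(n_0^{-1})$, and an Esseen-smoothing argument shows the $O_p(n_0^{-1})$ remainder is negligible, so $\mathbb{P}_{Poi}(T_{Poi}\le z)=\mathbb{P}_{Poi}(S_{Poi}-\tfrac12\delta_N S_{Poi}\sum_i W_i\le z)+o(n_0^{-1/2})$; because $\delta_N\asymp n_0^{-1/2}$, the cross term contributes at the same order as the first Edgeworth correction. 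Integrating the indicator of $\{g\le z\}$ against the one-term expansion of $\bW_N$: against the Gaussian part, writing $(Z_1,Z_2)$ for a bivariate normal with unit variances and correlation $\rho_N=\mathrm{Corr}(\hat Y_{Poi},\hat V_{Poi})$, conditioning on $Z_2$ and expanding in $\delta_N$ gives $\mathbb{P}\{Z_1(1-\tfrac12\delta_N Z_2)\le z\}=\Phi(z)+\tfrac12\delta_N\rho_N z^2\phi(z)+o(n_0^{-1/2})$; against the third-cumulant correction, only the $S_{Poi}$-marginal skewness term survives up to $o(n_0^{-1/2})$. Using $\E(I_i-\pi_i)^3=\pi_i(1-\pi_i)(1-2\pi_i)$ and $(1-\pi_i)^2\pi_i^{-2}-1=(1-2\pi_i)\pi_i^{-2}$ one gets $\sum_i\E(X_i^3)=V_{Poi}^{-3/2}\mu_{Poi}^{(3)}$, so the skewness term is $\tfrac16 V_{Poi}^{-3/2}\mu_{Poi}^{(3)}(1-z^2)\phi(z)$; and since under Poisson sampling $\mathrm{Cov}(\hat Y_{Poi},\hat V_{Poi})=\sum_i y_i^3(1-\pi_i)^2\pi_i^{-2}=\tau_{Poi}^{(3)}$, we have $\delta_N\rho_N=\{c_N V_{Poi}^{-1}\}\{V_{Poi}^{-1/2}c_N^{-1}\tau_{Poi}^{(3)}\}=V_{Poi}^{-3/2}\tau_{Poi}^{(3)}$, so the cross term is $\tfrac12 V_{Poi}^{-3/2}\tau_{Poi}^{(3)}z^2\phi(z)$. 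Adding the two contributions gives the displayed deterministic expansion, and reinstating the hatted coefficients via Lemma~\ref{lemma: almost sure variance poi} yields \eqref{eq: theorem 1 6}.

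The main obstacle is step (i): proving a joint Edgeworth expansion with \emph{$o(n_0^{-1/2})$} remainder, uniform in $z$ and uniform along the sequence of finite populations, for the triangular array $\{(X_i,W_i)\}_{i=1}^N$ in which $n_0$ may be of strictly smaller order than $N$, so there are far more summands than the effective sample size $n_0$. The moment bounds needed are routine from (C\ref{cond: C1poi})--(C\ref{cond: C4poi}) (which control $n_0N^{-2}V_{Poi}^{-1}$ and all relevant absolute moments uniformly), but the characteristic function of $\bW_N$ must be shown negligible away from the origin, and this is exactly the purpose of (C\ref{cond: extra poi}) --- together with its two-dimensional strengthening, which holds for the same reason since $(X_i,W_i)$ is an affine image of the Bernoulli $I_i$. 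Condition (C\ref{cond: extra poi}) guarantees that among any $m\asymp n_0^{-1/2}N/\log n_0$ standardized summands there is a block of $O(\log m)$ with genuinely different spans, making the product of their characteristic functions $O(m^{-a})$ for $|t|>t_0$; splitting the $N$ summands into such blocks renders the integrated characteristic function of $\bW_N$ negligible on the range $|t|>t_0$ relevant to the smoothing bound (up to $|t|$ of order $n_0^{1/2}\log n_0$), which is the part not covered by the local Gaussian approximation near the origin, and Esseen's smoothing inequality then delivers the expansion. Everything else --- the Taylor transfer in (ii), the cumulant bookkeeping, and the coefficient substitution --- is routine given Lemma~\ref{lemma: almost sure variance poi}.
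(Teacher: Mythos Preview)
Your proposal shares the paper's skeleton: you establish \eqref{eq: theorem 1 5} from Lemma~\ref{lemma: almost sure variance poi} and (C\ref{con: variance}), reduce \eqref{eq: theorem 1 6} to the same deterministic expansion (what the paper states as Lemma~\ref{lemma: 05}), and Taylor-expand $T_{Poi}$ into a linear part plus a bilinear cross term --- your $S_{Poi}-\tfrac12\delta_N S_{Poi}\sum_i W_i$ is exactly the paper's $\Delta_{N,1}+\Delta_{N,2}+\Delta_{N,3}$ from Lemma~\ref{lem_03}, and your cumulant bookkeeping reproduces the right coefficients. The divergence is in how the Edgeworth expansion itself is obtained. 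You package $(S_{Poi},\sum_i W_i)$ as a bivariate sum and invoke a joint Edgeworth expansion plus Hall's smooth-function transfer; the paper instead stays one-dimensional, computing the characteristic function of $\Delta_{N,1}+\Delta_{N,2}$ directly via $\E\{\exp(\iota t\Delta_{N,1})\}$ (Lemma~\ref{lem_01}) and $\E\{\Delta_{N,2}\exp(\iota t\Delta_{N,1})\}$ (Lemma~\ref{lem_02}), then applying Esseen smoothing and the truncation $\Delta_{N,2}(m)$ to exploit (C\ref{cond: extra poi}) for the tail range of $|t|$.

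Your bivariate route has a genuine technical gap. Each summand $(X_i,W_i)$ is supported on a line through the origin (both coordinates are scalar multiples of the same $I_i-\pi_i$), so the individual covariance matrices are singular and off-the-shelf multivariate Edgeworth theorems do not apply; the \emph{sum} has nonsingular covariance only if there is enough heterogeneity among the $(y_i,\pi_i)$, which is not guaranteed by (C\ref{cond: C1poi})--(C\ref{cond: extra poi}). More seriously, your ``two-dimensional strengthening'' of (C\ref{cond: extra poi}) is not implied by (C\ref{cond: extra poi}) as stated: for the bivariate characteristic function you would need $\prod_i \E[\exp\{\iota(t a_i+s b_i)(I_i-\pi_i)\}]$ to decay for \emph{all} directions $(t,s)$, whereas (C\ref{cond: extra poi}) controls only the specific direction of the $X_i$'s. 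The paper's one-dimensional argument sidesteps both issues: it never needs a joint Cram\'er-type condition, and the bilinear $\Delta_{N,2}$ is handled not through a second coordinate but by peeling off $\Delta_{N,2}(m)$ and factoring out $\prod_{i=1}^m \phi_{X_{N,i}}(t)$, to which (C\ref{cond: extra poi}) applies verbatim.
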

we make brief comments on the $o_p(\cdot)$ notation in (\ref{eq: theorem 1 6}) of Theorem \ref{theorem: edgeworth expansion Poisson}. The probability $\hat{F}_{Poi}(z)$ on the left side of (\ref{eq: theorem 1 6}) is not random. However, we use estimators in the Edgeworth expansion to make it easier to compare (\ref{eq: theorem 1 6}) with the result in the following theorem, so instead of $o(\cdot)$, it is reasonable to use $o_p(\cdot)$ on the right side of (\ref{eq: theorem 1 6}). Similar argument can be made for Edgeworth expansions under the other two sampling designs.

In order to establish the Edgeworth expansion for the conditional distribution of $T_{Poi}^*$, we need the following assumption, which is similar to condition (C\ref{cond: extra poi}) but with $m$ replaced by $n_0$.
We isolate (C\ref{cond: extra poi}) and (C\ref{cond: extra poi 02}) since (C\ref{cond: extra poi 02}) is not needed for Theorem \ref{theorem: edgeworth expansion Poisson}.
\begin{enumerate}
	\renewcommand{\labelenumi}{(C\arabic{enumi})}
	\setcounter{enumi}{4}
	\item There exist constants $t_0>0$ and $a>2$ such that, for any subset $\{X_{\ell_1},\ldots,X_{\ell_{n_0}}\}$ of $\{X_{1},\ldots,X_{N}\}$ with cardinality $n_0$,
	$$\left|\prod_{i=1}^{n_0}\E\{\exp(\iota tX_{\ell_i})\mid \mathcal{F}_N\}\right|=O(n_0^{-a})$$ uniformly in $|t|>t_0>0$.
	\label{cond: extra poi 02}
\end{enumerate}

The next theorem presents the Edgeworth expansion for the distribution of  $T_{Poi}^*$ based on the proposed bootstrap method. 

\begin{thm}\label{theorem: e 2}
	Suppose that conditions (C\ref{cond: C1poi})--(C\ref{cond: extra poi 02}) hold. Let $\hat{F}^*_{Poi}(z)$ be the cumulative distribution function of $T_{Poi}^*$ conditional on the bootstrap finite population $\mathcal{F}_{Poi}^*$. Then,
	\begin{eqnarray}
	& & \hat{F}^*_{Poi}(z) = \Phi(z)+\left\{\frac{\hat{\mu}_{N,Poi}^{(3)}}{6\hat{V}_{N,Poi}^{3/2}}(1-z^2)+\frac{\hat{\tau}_{N,Poi}^{(3)}}{2\hat{V}_{N,Poi}^{3/2}}z^2\right\}\phi(z) +o_p(n_0^{-1/2}) \label{eq: theorem 1 2}
	\end{eqnarray}
	%almost surely conditional on the sample $\{y_1,\ldots,y_n\}$ obtained by Poisson sampling in probability for
	uniformly in $z\in \mathbb{R}$.
\end{thm}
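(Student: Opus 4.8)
The plan is to reduce Theorem~\ref{theorem: e 2} to Theorem~\ref{theorem: edgeworth expansion Poisson} applied, conditionally, to the random bootstrap finite population. The first observation is that, once we condition on $\mathcal{F}_{Poi}^*$ --- equivalently, once the original sample and the multinomial counts $(N_1^*,\ldots,N_n^*)$ are frozen, so that $\mathcal{F}_N^*$ and $\Pi_N^*$ are deterministic --- Step~2 is exactly Poisson sampling with inclusion probabilities $\Pi_N^*$ from a finite population of size $N$ consisting of $N_i^*$ copies of $y_i$: indeed $m_i^*\sim\mathrm{Bin}(N_i^*,\pi_i)$ is the number of sampled copies of $y_i$, and $\hat{Y}_{Poi}^*$, $Y^*$, $\hat{V}_{Poi}^*$ are respectively the Horvitz--Thompson estimator, the population total, and the design-unbiased variance estimator for that population, so that $T_{Poi}^*$ is precisely the studentized statistic $T_{Poi}$ computed for $(\mathcal{F}_N^*,\Pi_N^*)$. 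Consequently, if $(\mathcal{F}_N^*,\Pi_N^*)$ satisfies, with $\mathbb{P}_{Poi}$-probability tending to one, the hypotheses of Theorem~\ref{theorem: edgeworth expansion Poisson} (for its own constants and some $\alpha^*\in(2^{-1},1]$), then that theorem yields a conditional Edgeworth expansion of $\hat{F}^*_{Poi}$ with coefficients built from the ``population'' quantities $V_{Poi}^*=\sum_{i=1}^n N_i^*y_i^2(1-\pi_i)\pi_i^{-1}$, $\mu_{Poi}^{*(3)}=\sum_{i=1}^n N_i^*y_i^3(1-\pi_i)\{(1-\pi_i)^2\pi_i^{-2}-1\}$, $\tau_{Poi}^{*(3)}=\sum_{i=1}^n N_i^*y_i^3(1-\pi_i)^2\pi_i^{-2}$, and remainder $o_p((n_0^*)^{-1/2})$ with $n_0^*=\sum_{i=1}^n N_i^*\pi_i$. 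It then remains to replace these coefficients by the sample-based ones in~(\ref{eq: theorem 1 2}).

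Verifying the regularity conditions for $(\mathcal{F}_N^*,\Pi_N^*)$ rests on the fact that the conditional expectation, given the original sample, of each relevant bootstrap-population quantity is a known design-weighted sample functional. With $S=\sum_{j=1}^n\pi_j^{-1}$, which is the Horvitz--Thompson estimator of $\sum_{i=1}^N 1 = N$ and hence equals $N(1+o_p(1))$ under (C\ref{cond: C1poi}), one has $\E(n_0^*\mid\text{sample})=Nn/S$, $\E(V_{Poi}^*\mid\text{sample})=(N/S)\hat{V}_{Poi}$, $\E(\mu_{Poi}^{*(3)}\mid\text{sample})=(N/S)\hat{\mu}_{Poi}^{(3)}$ and $\E(\tau_{Poi}^{*(3)}\mid\text{sample})=(N/S)\hat{\tau}_{Poi}^{(3)}$. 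Together with $n=n_0(1+o_p(1))$ this gives $n_0^*=n_0(1+o_p(1))\asymp N^\alpha$ and $C_1\le (n_0^*)^{-1}N\pi_i^*\le C_2$ up to vanishing corrections, so (C\ref{cond: C1poi}) transfers with $\alpha^*=\alpha$; together with Lemma~\ref{lemma: almost sure variance poi} and (C\ref{con: variance}) it gives $n_0^*N^{-2}V_{Poi}^*\to\sigma_1^2$, i.e.\ (C\ref{con: variance}) with $\sigma_1^{*2}=\sigma_1^2$; and it gives $(n_0^*)^2N^{-3}\mu_{Poi}^{*(3)}=O_p(1)$ and $(n_0^*)^2N^{-3}\tau_{Poi}^{*(3)}=O_p(1)$. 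The multinomial fluctuation about each conditional mean is of strictly smaller order: for any weights $(c_i)$, $\mathrm{Var}(\sum_i N_i^*c_i\mid\text{sample})\le N\sum_i\rho_i c_i^2$, and inserting the scales recorded in Lemma~\ref{lemma: almost sure variance poi} and (C\ref{cond: C4poi}) makes this $o_p$ of the leading term in each case. Finally, the eighth-moment condition (C\ref{cond: C4poi}) for $\mathcal{F}_N^*$, which the proof of Theorem~\ref{theorem: edgeworth expansion Poisson} uses to control its remainder terms, is inherited from (C\ref{cond: C4poi}) for $\mathcal{F}_N$ by the same conditional-expectation-plus-concentration scheme, passing to a subsequence if a genuine limit is required.

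The step I expect to be the main obstacle is checking the Cram\'er-type non-lattice condition (C\ref{cond: extra poi})/(C\ref{cond: extra poi 02}) for $\mathcal{F}_N^*$, because this population carries ties --- $N_i^*$ copies of each sampled $y_i$ --- so the ``for any subset'' form can be defeated by a sub-collection concentrated on a few original indices, or entirely on units with $y_i=0$, whose characteristic-function product is not small; Theorem~\ref{theorem: edgeworth expansion Poisson} therefore cannot be quoted here as a pure black box, and this is exactly why (C\ref{cond: extra poi 02}) is imposed for the present theorem. The remedy is that in the proof of Theorem~\ref{theorem: edgeworth expansion Poisson} the condition enters only to bound the \emph{full} product $\prod_{i=1}^N|\E\{\exp(\iota tX_i^*)\mid\mathcal{F}_N^*\}|$ for $|t|>t_0$. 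A copy of index $i$ contributes $|(1-\pi_i)+\pi_i\exp(\iota t(V_{Poi}^*)^{-1/2}y_i\pi_i^{-1})|=|\E\{\exp(\iota sX_i)\mid\mathcal{F}_N\}|$ with $s=t(V_{Poi}/V_{Poi}^*)^{1/2}=t(1+o_p(1))$; and since $\mathbb{P}_{Poi}(N_i^*\ge 1 \text{ for all } i\le n)\to 1$ --- because $P(N_i^*=0)\le\exp(-cN^{1-\alpha})$ while $n\asymp N^\alpha$ --- on that event the full product equals $\prod_{i=1}^n|\E\{\exp(\iota sX_i)\mid\mathcal{F}_N\}|^{N_i^*}$, which, each $N_i^*$ being of order $N^{1-\alpha}\to\infty$, is bounded by a large power of $\prod_{i=1}^n|\E\{\exp(\iota sX_i)\mid\mathcal{F}_N\}|$; the last product is controlled uniformly in $|s|>t_0$ by (C\ref{cond: extra poi 02}), which operates at subset size $n_0\asymp n$, after mildly enlarging $t_0$ to absorb the $1+o_p(1)$ rescaling. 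The resulting bound is far stronger than what the Edgeworth argument of Theorem~\ref{theorem: edgeworth expansion Poisson} requires.

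To conclude, I would apply Theorem~\ref{theorem: edgeworth expansion Poisson} to $(\mathcal{F}_N^*,\Pi_N^*)$ conditionally and then convert the coefficients. From the conditional-mean identities and fluctuation bounds above, $V_{Poi}^*=\hat{V}_{Poi}(1+o_p(1))$, $(n_0^*)^2N^{-3}(\mu_{Poi}^{*(3)}-\hat{\mu}_{Poi}^{(3)})=o_p(1)$, $(n_0^*)^2N^{-3}(\tau_{Poi}^{*(3)}-\hat{\tau}_{Poi}^{(3)})=o_p(1)$ and $(n_0^*)^{-1/2}\asymp n_0^{-1/2}$. Since $(V_{Poi}^*)^{3/2}\asymp\hat{V}_{Poi}^{3/2}\asymp N^3n_0^{-3/2}$ and, by~(\ref{eq: theorem 1 5}), $\hat{\mu}_{N,Poi}^{(3)}/\hat{V}_{N,Poi}^{3/2}$ and $\hat{\tau}_{N,Poi}^{(3)}/\hat{V}_{N,Poi}^{3/2}$ are $O_p(n_0^{-1/2})$, it follows that $\mu_{Poi}^{*(3)}/(V_{Poi}^*)^{3/2}=\hat{\mu}_{N,Poi}^{(3)}/\hat{V}_{N,Poi}^{3/2}+o_p(n_0^{-1/2})$ and $\tau_{Poi}^{*(3)}/(V_{Poi}^*)^{3/2}=\hat{\tau}_{N,Poi}^{(3)}/\hat{V}_{N,Poi}^{3/2}+o_p(n_0^{-1/2})$. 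Substituting these into the conditional expansion, and using that $(1-z^2)\phi(z)$ and $z^2\phi(z)$ are bounded uniformly in $z$ so that an $o_p(n_0^{-1/2})$ error in a coefficient contributes $o_p(n_0^{-1/2})$ uniformly in $z$, absorbs all discrepancies into the remainder and gives~(\ref{eq: theorem 1 2}).
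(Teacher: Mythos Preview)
Your overall strategy---verify that $(\mathcal{F}_N^*,\Pi_N^*)$ inherits analogues of (C\ref{cond: C1poi})--(C\ref{cond: extra poi 02}) with probability tending to one, re-run the Edgeworth machinery behind Theorem~\ref{theorem: edgeworth expansion Poisson} conditionally, then replace the starred population coefficients $V_{Poi}^*,\mu_{Poi}^{*(3)},\tau_{Poi}^{*(3)}$ by $\hat{V}_{Poi},\hat{\mu}_{Poi}^{(3)},\hat{\tau}_{Poi}^{(3)}$---is exactly the paper's. Your checks of (C\ref{cond: C1poi}), (C\ref{con: variance}), (C\ref{cond: C4poi}) via conditional means plus multinomial fluctuations, and the final coefficient matching, are essentially what the paper does.

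The genuine gap is in your handling of the Cram\'er-type condition when $\alpha=1$, which (C\ref{cond: C1poi}) explicitly permits. Your argument rests on $\mathbb{P}(N_i^*\ge1\text{ for all }i\le n)\to1$, justified by $P(N_i^*=0)\le\exp(-cN^{1-\alpha})$; but for $\alpha=1$ one has $\rho_i\asymp1/N$ and $P(N_i^*=0)=(1-\rho_i)^N\to e^{-c}>0$, so a positive fraction of the sampled values are \emph{absent} from $\mathcal{F}_N^*$, and the claim ``each $N_i^*$ being of order $N^{1-\alpha}\to\infty$'' likewise fails. The paper closes this with a separate lemma: the number $n_1$ of distinct original sample values present in $\mathcal{F}_N^*$ satisfies $\mathbb{P}_*(n_1\ge\min\{n_0,m\})\to1$, where $m=\lfloor n_0^{-1/2}N/\log n_0\rfloor$; in the regime $m<n_0$ (which covers $\alpha=1$) this is proved by a Stirling-type counting bound, and the product over those $m$ distinct bootstrap units is then controlled by (C\ref{cond: extra poi}) rather than (C\ref{cond: extra poi 02}). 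A smaller imprecision worth fixing: in the Esseen smoothing step the Cram\'er condition is not used to bound the \emph{full} product $\prod_{i=1}^N|\E\{\exp(\iota tX_i^*)\mid\mathcal{F}_N^*\}|$ but the product over a chosen subset of $m$ indices, extracted so that the remainder of $W_N^*-\Delta_{N,2}^*(m)$ is independent of those $m$ indicators; since each factor has modulus at most one, bounding the full product is the weaker statement and would not by itself suffice. What is actually needed---and what your argument does deliver for $\alpha<1$---is a set of at least $m$ \emph{distinct} bootstrap indices whose characteristic-function product is $O(m^{-a})$.
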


By comparing  (\ref{eq: theorem 1 6}) in Theorem \ref{theorem: edgeworth expansion Poisson} with (\ref{eq: theorem 1 2}) in Theorem \ref{theorem: e 2}, we show that the proposed bootstrap method is second-order accurate, but the Wald-type method, which is based on the asymptotic normality of $T_{Poi}$, is not if   ${\mu}_{Poi}^{(3)}$ and ${\tau}_{Poi}^{(3)}$ are nonzero by noting the fact that $\hat{\mu}_{Poi}^{(3)}$ and $\hat{\tau}_{Poi}^{(3)}$ are  design-unbiased estimators of ${\mu}_{Poi}^{(3)}$ and ${\tau}_{Poi}^{(3)}$, respectively. Typically, the cumulative distribution function $\hat{F}^*_{Poi}(z)$ is hard to study analytically, so we use an empirical distribution to approximate it.

Now, consider establishing confidence intervals for the population total $Y$. An approximate two-sided confidence interval at significance level $\alpha$ based on the Wald-type method can be constructed as
\begin{eqnarray}\label{eq:confid_01}
\left(\hat{Y}_{Poi}-q_{1-\alpha/2}\hat{V}_{Poi}^{1/2}, \hat{Y}_{Poi}-q_{\alpha/2}\hat{V}_{Poi}^{1/2}\right),
\end{eqnarray}
where $q_{\alpha/2}$ and $q_{1-\alpha/2}$ are  the  $(\alpha/2)$ and $(1-\alpha/2)$ quantiles of the standard normal distribution, respectively. According to Theorem \ref{theorem: edgeworth expansion Poisson}, though the upper and lower confidence limits of \eqref{eq:confid_01} have error rates of order $O_p(n_0^{-1/2})$, this two-sided confidence interval has error rate of order $o_p(n^{-1/2})$ since ${\hat{\mu}_{N,Poi}^{(3)}}/({6\hat{V}_{N,Poi}^{3/2}})(1-z^2)+{\hat{\tau}_{N,Poi}^{(3)}}({2\hat{V}_{N,Poi}^{3/2}})z^2$ is an even function of $z$, and the $n_0^{-1/2}$ order term in the Edgeworth expansion of $T_{Poi}$ cancel in the error rate. However, the $n_0^{-1/2}$ order term leads to an error rates of order $O_p(n_0^{-1/2})$  for one-sided confidence intervals based on the normal approximation. 

The confidence interval of $Y$ based on the proposed bootstrap methods is 
\begin{eqnarray}\label{eq:confid_02}
\left(\hat{Y}_{Poi}-q^*_{1-\alpha/2}\hat{V}_{Poi}^{1/2}, \hat{Y}_{Poi}-q^*_{\alpha/2}\hat{V}_{Poi}^{1/2}\right),
\end{eqnarray}
where $q^*_{\alpha/2}$ and $q^*_{1-\alpha/2}$ are the $(\alpha/2)$ and $(1-\alpha/2)$ quantiles of $\hat{F}^*_{Poi}(z)$. By Theorem \ref{theorem: e 2}, the coverage error of \eqref{eq:confid_02} is of order $o_p(n_0^{-1/2})$. Moreover, the upper and lower limits of \eqref{eq:confid_02} have error rates $o_p(n_0^{-1/2})$, which outperforms the confidence interval \eqref{eq:confid_01} based on Wald-type method. In addition, the one-sided confidence interval by the proposed bootstrap method is more accurate than the one-sided confidence interval obtained by the Wald-type method. Furthermore, as discussed in Section 3.6 of \citet{hall2013bootstrap}, an asymmetric equal-tailed confidence interval may convey important information. The same arguments can be used for the other two sampling designs.
% \emph{Remark 1}. The proposed method is similar to the one discussed by \citet{soton38498} in the sense that a multinomial distribution is used to bootstrap the  finite population, and the same sampling design is applied to the bootstrap   finite population. The essential difference between the two methods is that a  finite population is repeatedly regenerated, and we use an asymptotic pivotal statistic to make statistical inference for the finite population parameter of interest. Furthermore, we have shown that the proposed method is second-order accurate.

\section{Bootstrap method for SRS}
We propose  the following procedure to make statistical inference for $T_{SRS} = \hat{V}_{SRS}^{-1/2}( \hat{Y}_{SRS} - Y)$ under SRS.
\begin{enumerate}
	\item[Step 1.] Generate $(N_1^*,  \ldots, N_n^*)$ from $\mathrm{MN}(N; {\brho})$, where
	$\rho_i = n^{-1}$
	for $i=1,\ldots,n$. Then, $\mathcal{F}_N^*$ contains $N_i^*$ copies of $y_i$ for $i=1,\ldots,n$, and the bootstrap finite population total is $Y^* = \sum_{i=1}^NN_i^*y_i$.
	\item[Step 2.] Generate a bootstrap  sample of size $n$, denoted as $\{y_1^*,\ldots,y_n^*\}$, from  $\mathcal{F}_N^*$ using SRS. Then, we can obtain $T_{SRS} ^* = (\hat{V}_{SRS}^*)^{-1/2}( \hat{Y}_{SRS}^* - Y^*)$, where $\hat{Y}_{SRS}^*=Nn^{-1}\sum_{i=1}^ny_i^*$, $\hat{V}_{SRS}^*=N(N-n)n^{-1}s_{SRS}^{*2}$, $s_{SRS}^{*2} = n^{-1}\sum_{i=1}^n(y_i^* - \bar{y}^*)^2$, and $\bar{y}^*=n^{-1}\sum_{i=1}^ny_i^* $.
	\item[Step 3.] Repeat the two steps above independently  $M$ times. 		
\end{enumerate}

The three steps for SRS are similar to those under Poisson sampling, but we do not need $\Pi_N^*$ since $\pi_i^* = nN^{-1}$ for $i=1,\ldots, N$. Different from that under Poisson sampling, the bootstrap sample is generated directly from $\mathcal{F}_N^*$. One commonly used algorithm to generate a sample of size $n$ under SRS is to select elements sequentially from the finite population without replacement. If $n=o(N)$, the computational complexity of selecting each element is $O(N)$. Besides the above bootstrap procedure, we propose the following one. It can be shown that these two procedures are equivalent under SRS, but the computational complexity of the latter is $O(n)$ for selecting each element.
	\begin{enumerate}
			\item[Step 1'.] The same as Step 1 above.
		\item[Step 2'.] Initialize $N_i^{*(0)}=N_i^*$  and $m^*_i=0$ for $i=1,\ldots,n$.
		\item[Step 3'.] Generate a bootstrap sample of size $n$ from $\mathcal{F}^*_N$ under SRS.
		\begin{enumerate}
			\item[Step 3.1'.] Initialize $k=1$.
			\item[Step 3.2'.] Select an index, say $l^{(k)}$, from $\{1,\ldots,n\}$ with selection probability $p_i^{(k)} = N_i^{*(k-1)}/\sum_{j=1}^nN_j^{*(k-1)}$ for $i=1,\ldots,n$.
			\item[Step 3.3'.] Update $m_i^* = m_i^* + 1$ if $i=l^{(k)}$. Set $N_i^{*(k)} = N_i^{*(k-1)}$ if $i\in\{1,\ldots,n\}\setminus \{l^{(k)}\}$, and $N_i^{*(k)} = N_i^{*(k-1)}-1$ if $i=l^{(k)}$, where $A\setminus B=\{x\in A:x\notin B\}$ for two sets $A$ and $B$.
			\item[Step 3.4'.] Set $k=k+1$, and go back to Step 3.2' until $k>n$.
			\item[Step 3.5'.] Obtain $T_{SRS} ^* = (\hat{V}_{SRS}^*)^{-1/2}( \hat{Y}_{SRS}^* - Y^*)$, where $\hat{Y}_{SRS}^* = Nn^{-1}\sum_{i=1}^nm_i^*y_i$, $\hat{V}_{SRS}^* =N(N-n)n^{-1}s_{SRS}^{*2}$, $s_{SRS}^{*2}= n^{-1}\sum_{i=1}^nm_i^*(y_i-\bar{y}^*)^2$, and $\bar{y}^* = n^{-1}\sum_{i=1}^nm_i^*y_i$.
		\end{enumerate}
		\item[Step 4'.] Repeat the above three steps independently   $M$ times.
	\end{enumerate}

We list some necessary conditions for studying the theoretical properties of the proposed bootstrap  method under SRS.

\begin{enumerate}
	\renewcommand{\labelenumi}{(C\arabic{enumi})}
	\setcounter{enumi}{5}
	\item There exist $\beta\in(2^{-1},1]$ and $\kappa\in(0,1)$ such that $n\asymp N^\beta$ and $nN^{-1}\leq 1-\kappa$ as $N\to\infty$. \label{cond: srs2}
	\item The finite population satisfies
	$$
	\lim_{N\to\infty}\sigma_{SRS}^2 =\sigma_2^2
	,
	$$
	where $\sigma_{SRS}^2 = N^{-1} \sum_{i=1}^N(y_i-N^{-1}Y)^2$, and $\sigma_2^2$ is a positive constant.\label{cond: variance srs}
	\item The distribution $G_{N,SRS}$ converges weakly to a strongly non-lattice distribution $G_{SRS}$, where $G_{N,SRS}$ assigns probability $1/N$ to $y_1,\ldots,y_N$. \label{cond: srs1}
	
\end{enumerate}

Condition (C\ref{cond: srs2}) is a counterpart of (C\ref{cond: C1poi}), and it is used to rule out the trivial case when the sample size equals to that of the finite population. Condition (C\ref{cond: variance srs}) regulates the variance of $\mathcal{F}_N$ with respect to the distribution $G_{N,SRS}$, and it concentrates our discussion on the non-degenerate case under SRS. The non-latticed assumption in  (C\ref{cond: srs1}) is used to make the discussion easier, and  a distribution $G(x)$ is strongly non-latticed if $\lvert \int \exp(\iota tx)\mathrm{d}G(x)\rvert \neq 1$ for all $t\neq 0$; see \citet{Babu1984} for details.

We can use a similar argument made in Section \ref{sec: basic setup} to show that   there exists a probability measure $\mathbb{P}_{SRS}$ on the product space $\mathcal{F}=\bigtimes_{N=1}^\infty \mathcal{F}_N$ equipped with the product $\sigma$-algebra $\mathcal{B}$.

\begin{lemma}\label{lemma: srs 1}
	Suppose that (C\ref{cond: C4poi}), (C\ref{cond: srs2}) and (C\ref{cond: variance srs}) hold. Then,
	\begin{equation}
%	&	\limsup_{N\to\infty}\sigma_{SRS}^2=O(1),\label{eq: lemma 2.1}\\
	{\mu}^{(3)}_{SRS}  = O(1),\label{eq: lemma 2.2}
	\end{equation}
	where  ${\mu}^{(3)}_{SRS}=N^{-1}\sum_{i=1}^N(y_i-N^{-1}Y)^3$  is the third central moment of $\mathcal{F}_N$ with respect to the distribution $G_{N,SRS}$.  Besides,
	\begin{equation}
	s_{SRS}^2 - \sigma_{SRS}^2 \to 0 \label{eq: lemma srs 1.1}
	\end{equation}
	as $N\to\infty$ almost surely $(\mathbb{P}_{SRS})$. In addition,
	\begin{eqnarray}
	\hat{\mu}^{(3)}_{SRS} - {\mu}^{(3)}_{SRS} = o_p(1),\label{eq: lemma srs 1.2}
	\end{eqnarray}
	where $\hat{\mu}^{(3)}_{SRS} = n^{-1}\sum_{i=1}^ny_i^3+ 2\bar{y}_n^3 - 3\bar{y}_nn^{-1}\sum_{i=1}^ny_i^2$, and $\bar{y}_n = n^{-1}\sum_{i=1}^ny_i$ is the sample mean.
\end{lemma}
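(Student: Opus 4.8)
The plan is to treat the three displayed conclusions separately, since they are of different types: \eqref{eq: lemma 2.2} is a deterministic $O(1)$ bound, \eqref{eq: lemma srs 1.1} is an almost-sure convergence, and \eqref{eq: lemma srs 1.2} is an $o_p(1)$ statement. First, for \eqref{eq: lemma 2.2}, I would bound $\lvert{\mu}^{(3)}_{SRS}\rvert = \lvert N^{-1}\sum_{i=1}^N(y_i-\bar Y)^3\rvert$ by expanding the cube and applying the Cauchy--Schwarz (or Hölder) inequality: $N^{-1}\sum (y_i-\bar Y)^3$ is controlled by $(N^{-1}\sum y_i^2)^{1/2}(N^{-1}\sum y_i^4)^{1/2}$-type products together with powers of $\bar Y$, and condition (C\ref{cond: C4poi}) gives $N^{-1}\sum_{i=1}^N y_i^8 \to C_3$, hence $N^{-1}\sum y_i^k = O(1)$ for every $k\le 8$ by Jensen/Lyapunov; in particular $\bar Y = O(1)$, so the whole expression is $O(1)$.

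Next, for the almost-sure statement \eqref{eq: lemma srs 1.1}, I would write $s_{SRS}^2 - \sigma_{SRS}^2 = (n^{-1}\sum_{i=1}^n y_i^2 - N^{-1}\sum_{i=1}^N y_i^2) - (\bar y_n^2 - \bar Y^2)$ and show each piece goes to zero a.s.\ $(\mathbb{P}_{SRS})$ along the sequence of finite populations. Since under SRS $n^{-1}\sum_{i=1}^n y_i^2$ is the sample mean of an SRS draw with population second moment $N^{-1}\sum y_i^2$, its design-variance is $O(n^{-1}(N-n)N^{-1}\cdot N^{-1}\sum y_i^4) = O(n^{-1})$ using (C\ref{cond: C4poi}); similarly $\bar y_n$ has design-mean $\bar Y$ and design-variance $O(n^{-1})$. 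Since $n\asymp N^\beta$ with $\beta>1/2$ by (C\ref{cond: srs2}), we have $\sum_N n^{-1} \le \sum_N C N^{-\beta}$, which is \emph{not} summable when $\beta\le 1$; so a direct Borel--Cantelli on first moments of squared deviations is not quite enough. The fix is to use a fourth-moment (or higher) bound: the fourth central design-moment of an SRS sample mean is $O(n^{-2})$, and $\sum_N n^{-2}\le \sum_N C N^{-2\beta} < \infty$ since $2\beta>1$. Applying Markov's inequality at the fourth moment together with Borel--Cantelli (and the fact that the $y_i^k$ population moments are uniformly bounded by (C\ref{cond: C4poi})) yields the a.s.\ convergence; this is exactly the type of argument underlying the construction of $\mathbb{P}_{SRS}$ on the product space, paralleling Lemma \ref{lemma: almost sure variance poi}.

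Finally, \eqref{eq: lemma srs 1.2} is the easiest: $\hat\mu^{(3)}_{SRS} = n^{-1}\sum_{i=1}^n y_i^3 + 2\bar y_n^3 - 3\bar y_n\, n^{-1}\sum_{i=1}^n y_i^2$ is the natural plug-in estimator of ${\mu}^{(3)}_{SRS} = N^{-1}\sum y_i^3 + 2\bar Y^3 - 3\bar Y\, N^{-1}\sum y_i^2$. Each sample quantity $n^{-1}\sum_{i=1}^n y_i^k$ ($k=1,2,3$) is design-unbiased for the corresponding population average with design-variance $O(n^{-1}\cdot N^{-1}\sum y_i^{2k}) = O(n^{-1}) = o(1)$ by (C\ref{cond: C4poi}) (note $2k\le 6\le 8$), hence converges in probability to the population value; then $o_p(1)$ for $\hat\mu^{(3)}_{SRS}-{\mu}^{(3)}_{SRS}$ follows from the continuous mapping theorem (polynomial combinations of quantities that are each $O(1)$ and whose differences are $o_p(1)$). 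I expect the main obstacle to be the almost-sure claim \eqref{eq: lemma srs 1.1}: one must be careful that the naive second-moment Borel--Cantelli fails for $\beta\in(1/2,1]$ and instead invoke a higher-moment bound for the SRS sample mean, which is where the eighth-moment assumption (C\ref{cond: C4poi}) — seemingly generous — is actually used.
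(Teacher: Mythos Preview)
Your proposal is correct and matches the paper's approach essentially step for step: the same decomposition of $s_{SRS}^2-\sigma_{SRS}^2$ into sample-versus-population moment differences, the same fourth-moment Borel--Cantelli argument for \eqref{eq: lemma srs 1.1} (the paper likewise uses $2\beta>1$ to make $\sum_N n^{-2}$ summable), and the same mean-and-variance Chebyshev check for \eqref{eq: lemma srs 1.2}. The one thing the paper spells out that you leave implicit is the computation of the mixed moments $E\{(I_{N,i}\pi_{N,i}^{-1}-1)^{a_1}(I_{N,j}\pi_{N,j}^{-1}-1)^{a_2}\cdots\mid\mathcal F_N\}$ under SRS (where the indicators are dependent), which it isolates in a separate auxiliary lemma before expanding the fourth power.
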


Lemma \ref{lemma: srs 1} is the counterpart of Lemma \ref{lemma: almost sure variance poi} under SRS, and it shows the convergence properties of the sample variance and third central moment under mild conditions. We do not use  scaling factors in (\ref{eq: lemma 2.2})--(\ref{eq: lemma srs 1.2}) since the quantities involved are with respect to the distribution $G_{N,SRS}$. 

\begin{thm} \label{th: Edgeworth SRS}
	Suppose that  (C\ref{cond: C4poi}) and (C\ref{cond: srs2})--(C\ref{cond: srs1}) hold. Then,
	\begin{equation}
	\hat{F}_{SRS}(z) = \Phi(z)  + \frac{(1-n/N)^{1/2}\hat{\mu}^{(3)}_{SRS}}{6n^{1/2} s_{SRS}^3}\left\{3z^2 -\frac{1-2n/N}{1-n/N}(z^2-1)\right\}\phi(z) + o_p(n^{-1/2})\label{eq: SRS ori edg}
	\end{equation}
	%almost surely ($\mathbb{P}_{SRS}$) for
	uniformly in $z\in \mathbb{R}$, where $\hat{F}_{SRS}(z)=\mathbb{P}_{SRS}\left(T_{SRS}\leq z\right)$ is the cumulative distribution function of $T_{SRS}$.
\end{thm}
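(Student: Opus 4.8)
The plan is to derive the Edgeworth expansion for the studentized Horvitz--Thompson estimator under simple random sampling without replacement by first handling the non-studentized (standardized) version and then transferring the expansion to the studentized statistic via a delta-method / stochastic-expansion argument. The starting point is the classical result on Edgeworth expansions for sample sums under SRSWOR; one such result, going back to work in the spirit of \citet{Babu1984}, states that under a strongly non-lattice assumption on $G_{N,SRS}$ (our condition (C\ref{cond: srs1})), a moment bound (supplied here by (C\ref{cond: C4poi})), and a nondegeneracy condition on the variance (our (C\ref{cond: variance srs})), the standardized statistic $(\hat Y_{SRS}-Y)/V_{SRS}^{1/2}$, where $V_{SRS}=N^2(1-n/N)\sigma_{SRS}^2/n$ is the exact design variance, admits a one-term Edgeworth expansion $\Phi(z) - n^{-1/2}\kappa_{1,N}\,(z^2-1)\phi(z)/6 + o(n^{-1/2})$ uniformly in $z$, where $\kappa_{1,N}$ is the standardized third cumulant of the sampling distribution. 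For SRSWOR the third cumulant of $\hat Y_{SRS}$ is known exactly and equals $N^2 n^{-2}(1-n/N)(1-2n/N)\,\mu^{(3)}_{SRS}$ up to the normalization, which is exactly what produces the $(1-2n/N)/(1-n/N)$ factor appearing in \eqref{eq: SRS ori edg}. By Lemma \ref{lemma: srs 1}, $\mu^{(3)}_{SRS}=O(1)$ and $\sigma_{SRS}^2\to\sigma_2^2>0$, so $\kappa_{1,N}=O(1)$ and bounded away from degeneracy, which makes the $n^{-1/2}$ term genuinely of that order.

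Next I would pass from the standardized to the studentized statistic. Write $T_{SRS} = (\hat Y_{SRS}-Y)/\hat V_{SRS}^{1/2} = \{(\hat Y_{SRS}-Y)/V_{SRS}^{1/2}\}\cdot(V_{SRS}/\hat V_{SRS})^{1/2}$. Since $\hat V_{SRS}=N(N-n)n^{-1}s_{SRS}^2$ and $V_{SRS}=N(N-n)n^{-1}\sigma_{SRS}^2$, the ratio is simply $\sigma_{SRS}^2/s_{SRS}^2$, and by \eqref{eq: lemma srs 1.1} this converges to $1$ almost surely $(\mathbb P_{SRS})$; more precisely $s_{SRS}^2-\sigma_{SRS}^2=O_p(n^{-1/2})$ under the fourth-moment control from (C\ref{cond: C4poi}). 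The standard device (see \citet[\S2.4]{hall2013bootstrap}) is to expand $(V_{SRS}/\hat V_{SRS})^{1/2}=1-\tfrac12 W_N+O_p(n^{-1})$ where $W_N=(s_{SRS}^2-\sigma_{SRS}^2)/\sigma_{SRS}^2=O_p(n^{-1/2})$, and then to compute the joint Edgeworth expansion of the pair $(U_N, W_N)$, where $U_N=(\hat Y_{SRS}-Y)/V_{SRS}^{1/2}$, which requires the joint third cumulant, i.e. the asymptotic covariance between the linear statistic and the quadratic statistic $s_{SRS}^2$. Under SRSWOR this cross-cumulant is again available in closed form (it is governed by $\mu^{(3)}_{SRS}$), and its contribution is precisely what shifts the coefficient from $\tfrac16(z^2-1)$ to the bracket $\tfrac16\{3z^2-\tfrac{1-2n/N}{1-n/N}(z^2-1)\}$: the $3z^2$ piece comes from the studentization correction $-\tfrac12 W_N$, while the $(1-2n/N)/(1-n/N)(z^2-1)$ piece is the residue of the skewness of $U_N$ itself. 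Replacing $\sigma_{SRS}^2$ by $s_{SRS}^2$ and $\mu^{(3)}_{SRS}$ by $\hat\mu^{(3)}_{SRS}$ in the $n^{-1/2}$-order coefficient is justified by \eqref{eq: lemma srs 1.1} and \eqref{eq: lemma srs 1.2}, since that coefficient is already $O_p(n^{-1/2})$ so the replacement error is $o_p(n^{-1/2})$; this also explains why the theorem is stated with hatted quantities and an $o_p$ remainder rather than an $o$ remainder.

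Concretely, the steps in order are: (i) invoke the known one-term Edgeworth expansion for the standardized mean under SRSWOR, checking that (C\ref{cond: C4poi}), (C\ref{cond: variance srs}), (C\ref{cond: srs2}) and the strongly non-lattice condition (C\ref{cond: srs1}) meet its hypotheses, and identify the skewness coefficient explicitly via the exact third cumulant of $\hat Y_{SRS}$ under SRSWOR; (ii) establish the stochastic expansion $T_{SRS}=U_N(1-\tfrac12 W_N)+R_N$ with $R_N=O_p(n^{-1})$, controlling $R_N$ uniformly enough that it does not disturb the distributional expansion (this uses the moment bounds to bound $s_{SRS}^2$ away from $0$ with high probability and to control $W_N^2$); (iii) compute the joint cumulants of $(U_N,W_N)$ up to the order needed and assemble the Edgeworth expansion of the smooth-ish transformation, using a Cornish--Fisher / smooth-function-of-means type argument (cf. \citet[\S2.4, \S3.3]{hall2013bootstrap}) or, alternatively, a direct characteristic-function computation; (iv) replace $\sigma_{SRS}^2,\mu^{(3)}_{SRS}$ by their sample analogues in the $n^{-1/2}$ term using Lemma \ref{lemma: srs 1}, absorbing the error into $o_p(n^{-1/2})$, and simplify the coefficient to the stated bracket.

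The main obstacle I expect is step (iii): making the transfer from the standardized to the studentized statistic rigorous and uniform in $z$. Unlike the i.i.d.\ case, here $U_N$ and $s_{SRS}^2$ are dependent functionals of the same without-replacement sample, so one cannot appeal to an off-the-shelf smooth-function-of-means Edgeworth theorem for i.i.d.\ data; one must either cite (or re-derive) a smooth-function expansion valid for SRSWOR sample means and variances, or carry out the characteristic-function argument directly, which requires the finite-population analogue of Cramér's condition — and that is exactly where the strongly non-lattice assumption (C\ref{cond: srs1}) does the work, much as (C\ref{cond: extra poi}) did in Theorem \ref{theorem: edgeworth expansion Poisson}. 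A secondary technical point is bookkeeping the finite-population correction factors $(1-n/N)$ and $(1-2n/N)$ through every cumulant so that the final coefficient collapses correctly; this is routine but error-prone, and is the reason I would keep the exact (rather than asymptotic) third-cumulant formula for $\hat Y_{SRS}$ in hand throughout.
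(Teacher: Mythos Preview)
Your plan is correct, but it is considerably more elaborate than what the paper actually does. The paper does not re-derive the Edgeworth expansion for the studentized statistic under SRSWOR from scratch; it simply invokes a result from Section~2 of \citet{babu1985edgeworth}, which already gives the one-term expansion for the \emph{studentized} sample mean under simple random sampling without replacement, expressed in terms of $\sigma_{SRS}^2$ and $\mu^{(3)}_{SRS}$, under exactly the strongly non-lattice and moment hypotheses encoded in (C\ref{cond: C4poi}) and (C\ref{cond: srs2})--(C\ref{cond: srs1}). In other words, your steps (i)--(iii) --- the standardized expansion, the stochastic expansion $T_{SRS}=U_N(1-\tfrac12 W_N)+R_N$, and the joint-cumulant bookkeeping that you flag as the main obstacle --- are entirely subsumed by that citation; Babu and Singh already carried out the passage from standardized to studentized in the finite-population setting and packaged the $(1-2n/N)/(1-n/N)$ factor into their formula. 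The only additional ingredient the paper supplies is your step (iv): replacing $\sigma_{SRS}$ and $\mu^{(3)}_{SRS}$ by $s_{SRS}$ and $\hat\mu^{(3)}_{SRS}$ in the $n^{-1/2}$-order coefficient, which is justified by \eqref{eq: lemma srs 1.1} and \eqref{eq: lemma srs 1.2} of Lemma~\ref{lemma: srs 1} and accounts for the $o_p$ rather than $o$ remainder.

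What your approach buys is self-containment: you would not need to check that the hypotheses of \citet{babu1985edgeworth} are met or to translate their notation, and your argument would parallel the structure of the Poisson case (Lemmas~\ref{lem_01}--\ref{lemma: 05}) in the supplement. What the paper's approach buys is brevity --- the entire proof collapses to one citation plus Lemma~\ref{lemma: srs 1}.
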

Theorem \ref{th: Edgeworth SRS} shows the Edgeworth expansion for the distribution of $T_{SRS}$, and this result is obtained by one result in Section 2 of \citet{babu1985edgeworth}. Instead of using $\mu_{SRS}^{(3)}$ and $\sigma_{SRS}$ as done by \citet{babu1985edgeworth}, we use their estimators in (\ref{eq: SRS ori edg}) based on  Lemma \ref{lemma: srs 1}.

\begin{thm}\label{theorem: srs 2}
	Suppose that  (C\ref{cond: C4poi}) and (C\ref{cond: srs2})--(C\ref{cond: srs1}) hold. Then, we have
	\begin{equation}
	\hat{F}^*_{SRS}(z) = \Phi(z)  + \frac{(1-n/N)^{1/2}\hat{\mu}^{(3)}_{SRS}}{6n^{1/2} s_{SRS}^3}\left\{3z^2-\frac{1-2n/N}{1-n/N}(z^2-1)\right\}\phi(z) + o_p(n^{-1/2})\label{eq: srs boot edg}
	\end{equation}
	%almost surely conditional on the sample $\{y_1,\ldots,y_n\}$ obtained by SRS in probability for
	uniformly in $z\in \mathbb{R}$, where $\hat{F}^*_{SRS}(z)$ is the cumulative distribution function of $T_{SRS}^*$ conditional on the bootstrap finite population $\mathcal{F}_{N}^*$.
\end{thm}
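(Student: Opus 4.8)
The plan is to deduce Theorem~\ref{theorem: srs 2} from Theorem~\ref{th: Edgeworth SRS} by conditioning on the bootstrap finite population. The point is that, conditionally on $\mathcal{F}_N^*$, Step~2 is exactly simple random sampling of size $n$ from a population of size $N$, and $\hat{Y}_{SRS}^*$, $\hat{V}_{SRS}^*$, $Y^*=\sum_{i=1}^N y_i^*$ are precisely the Horvitz--Thompson estimator, its design-unbiased variance estimator, and the true total for that population and design; hence $T_{SRS}^*$ is the very studentized pivot to which Theorem~\ref{th: Edgeworth SRS} pertains. Provided the (random) sequence $\{\mathcal{F}_N^*\}_{N\ge1}$ obeys conditions (C\ref{cond: C4poi}) and (C\ref{cond: srs2})--(C\ref{cond: srs1}) along almost every realisation of the original sample and the Step-1 multinomial draws, Theorem~\ref{th: Edgeworth SRS} then yields, conditionally on $\mathcal{F}_N^*$,
\begin{equation*}
\hat{F}^*_{SRS}(z)=\Phi(z)+\frac{(1-n/N)^{1/2}\mu^{*(3)}_{SRS}}{6n^{1/2}(\sigma^{*2}_{SRS})^{3/2}}\left\{3z^{2}-\frac{1-2n/N}{1-n/N}(z^{2}-1)\right\}\phi(z)+o(n^{-1/2})
\end{equation*}
uniformly in $z$, where $\sigma^{*2}_{SRS}=N^{-1}\sum_{i=1}^N(y_i^*-\bar Y^*)^2$ and $\mu^{*(3)}_{SRS}=N^{-1}\sum_{i=1}^N(y_i^*-\bar Y^*)^3$, with $\bar Y^*=N^{-1}Y^*$, are the variance and third central moment of $\mathcal{F}_N^*$ (passing from the bootstrap-sample estimates produced by Theorem~\ref{th: Edgeworth SRS} to these population quantities is itself an instance of the argument of Lemma~\ref{lemma: srs 1}).

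The first task is to verify that $\{\mathcal{F}_N^*\}$ inherits the regularity conditions. Condition (C\ref{cond: srs2}) is immediate, since the bootstrap population and sample have sizes $N$ and $n$. For the rest, I would write every population functional of $\mathcal{F}_N^*$ as a multinomial average: for a power $r$, $N^{-1}\sum_{i=1}^N(y_i^*)^r=N^{-1}\sum_{i=1}^n N_i^*y_i^r$ has conditional mean $n^{-1}\sum_{i=1}^n y_i^r$ and conditional variance of order $N^{-1}\,n^{-1}\sum_{i=1}^n y_i^{2r}=O_p(N^{-1})$ for $r\le4$, the last step using the without-replacement law of large numbers under (C\ref{cond: C4poi}). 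Combining this concentration bound --- upgraded to an almost-sure statement by Borel--Cantelli, which is where the eighth-moment hypothesis (C\ref{cond: C4poi}) is spent --- with the strong law for $n^{-1}\sum_{i=1}^n y_i^r$ (as in the proof of Lemma~\ref{lemma: srs 1}) gives $N^{-1}\sum_{i=1}^N(y_i^*)^8\to C_3$, $\sigma^{*2}_{SRS}\to\sigma_2^2$, and $G^*_{N,SRS}\Rightarrow G_{SRS}$, the last because the empirical measure of $\mathcal{F}_N^*$ differs from that of the sample only by a multinomial reweighting that is asymptotically negligible in the weak topology, while the sample empirical measure converges weakly to the strongly non-lattice $G_{SRS}$ by (C\ref{cond: srs1}). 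These are exactly (C\ref{cond: C4poi}), (C\ref{cond: variance srs}) and (C\ref{cond: srs1}) for $\{\mathcal{F}_N^*\}$.

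The second task is to replace $\mu^{*(3)}_{SRS}$ and $\sigma^{*2}_{SRS}$ in the displayed coefficient by the sample quantities $\hat\mu^{(3)}_{SRS}$ and $s_{SRS}^2$ appearing in~(\ref{eq: srs boot edg}). Expanding $\mu^{*(3)}_{SRS}=N^{-1}\sum_{i=1}^n N_i^*y_i^3-3\bar Y^*N^{-1}\sum_{i=1}^n N_i^*y_i^2+2(\bar Y^*)^3$ and using the same concentration bounds together with Lemma~\ref{lemma: srs 1}, one gets $\sigma^{*2}_{SRS}-s_{SRS}^2=o_p(1)$ and $\mu^{*(3)}_{SRS}-\hat\mu^{(3)}_{SRS}=o_p(1)$; since both corrections enter the coefficient multiplied by $n^{-1/2}$ and the denominators converge to the positive constant $\sigma_2^3$, the substitution costs only an extra $o_p(n^{-1/2})$ term, and~(\ref{eq: srs boot edg}) follows.

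The main obstacle is the bookkeeping in the first task, forced by the fact that Theorem~\ref{th: Edgeworth SRS} is stated for a deterministic sequence of populations whereas $\{\mathcal{F}_N^*\}$ is random. The conditions must be checked along almost every path of the enlarged probability space (or, equivalently, one passes to subsequences and exploits that an $o_p(n^{-1/2})$ conclusion is the same as: every subsequence has a further subsequence along which the $o(n^{-1/2})$ bound holds almost surely). This is what forces the multinomial concentration estimates to be quantified uniformly over the relevant functionals and summably, rather than merely in probability, and it is the only place where the full strength of (C\ref{cond: C4poi}) enters.
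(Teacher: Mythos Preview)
Your proposal is correct and follows essentially the same route as the paper: verify that the bootstrap population $\mathcal{F}_N^*$ inherits the moment, variance, and non-lattice conditions (the paper does this via the same multinomial-mean and characteristic-function calculations you sketch), apply the Edgeworth expansion of Theorem~\ref{th: Edgeworth SRS} conditionally, and then replace the bootstrap-population moments $\sigma^{*2}_{SRS},\mu^{*(3)}_{SRS}$ by the sample quantities $s_{SRS}^2,\hat\mu^{(3)}_{SRS}$. Your discussion of the random-population bookkeeping (subsequences or Borel--Cantelli) is in fact more careful than the paper, which verifies the conditions only in probability and is content with the resulting $o_p(n^{-1/2})$ conclusion.
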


Theorem \ref{theorem: srs 2} shows the Edgeworth expansion for the distribution of $T^*_{SRS}$ obtained by the proposed bootstrap method. By comparing (\ref{eq: SRS ori edg}) in Theorem \ref{th: Edgeworth SRS} with (\ref{eq: srs boot edg}) in Theorem \ref{theorem: srs 2}, we have shown the second-order accuracy of the proposed bootstrap method.

\section{Bootstrap method for PPS sampling}
We consider PPS sampling in this section and propose  the following bootstrap method to approximate the sampling distribution of $T_{PPS} = \hat{V}_{PPS}^{-1/2}(\hat{Y}_{PPS} - Y)$.
\begin{enumerate}
	\item[Step 1.] Obtain $(N_{a,1}^*,  \ldots, N_{a,n}^*)$ from a multinomial distribution $\mathrm{MN}(N; {\brho})$, where
	$\rho=(\rho_1,\ldots,\rho_n)$
	 and $\rho_{i} = p_{a,i}^{-1}(\sum_{j=1}^np_{a,j}^{-1})^{-1}$ for $i=1,\ldots,n$. Then, $\mathcal{F}_N^*=\{y_1^*,\ldots,y_N^*\}$  consists of $N_{a,i}^*$ copies of $y_{a,i}$, and
	the bootstrap  finite population total is  $Y^* = \sum_{i=1}^Ny_i^* = \sum_{i=1}^nN^*_{a,i}y_{a,i}$. The  bootstrap selection probabilities are  $\{(C_N^*)^{-1}p_1^*,\ldots,(C_N^*)^{-1}p_N^*\},$ where $C_N^* = \sum_{i=1}^Np_i^*=\sum_{i=1}^nN_{a,i}^*p_{a,i}$, and $\{p_1^*,\ldots,p_N^*\}$ consists of $N_{a,i}^*$ copies of $p_{a,i}$ for $i=1,\ldots,n$.
	\item[Step 2.] Based on $\mathcal{F}_N^*$, generate a sample of size $n$ by  independently and identically selecting an element from $\mathcal{F}_N^*$  $n$ times with selection probabilities $\{(C_N^*)^{-1}p_i^*: i=1,\ldots,N\}$. Then, we have  $T_{PPS}^*= (\hat{V}_{PPS}^*)^{-1/2}( \hat{Y}_{PPS}^* - Y^*)$, where  $\hat{Y}^*_{PPS} = n^{-1}\sum_{i=1}^nC_N^*(p_{b,i}^*)^{-1}y_{b,i}^*$ , $y_{b,i}^*=y_k^*$ and $p_{b,i}^*=p_k^*$ if the index of the $i$-th draw is $k$, and $\hat{V}_{PPS}^* = n^{-2}\sum_{i=1}^n\{C_N^*(p_{b,i}^*)^{-1}y_{b,i}^*-\hat{Y}_{PPS}^*\}^2$ is the counterpart of $\hat{V}_{PPS}$ based on the bootstrap  sample.
	\item[Step 3.] Repeat the two steps above independently   $M$ times.
\end{enumerate}

To implement the proposed bootstrap method for PPS sampling, the bootstrap selection probability should be standardized before drawing a sample.
Similarly to the previous two sections, we use the empirical distribution of  $T_{PPS}^*$ to make statistical inference for $T_{PPS}$.

The computational complexity of selecting an element in Step 2 is $O(N)$. %Instead of the above bootstrap method, we propose the following one. It can be shown that these two methods are equivalent for PPS sampling, but the computational complexity of the latter is $O(n)$ for selecting an element.
An equivalent way of carrying out the proposed bootstrap method under PPS sampling is described below, and its computational complexity is $O(n)$ for selecting an element. 

	\begin{enumerate}
	\item[Step 1'.] The same as Step 1 above.
	\item[Step 2'.]  Obtain an independent and identical sample of size $n$ from $\{1,\ldots,n\}$, and the selection probability of $i$ is $p_{i}^{\dagger} = (C_N^*)^{-1}N_i^*p_{a,i}$ for $i=1,\ldots,n$. Denote $m_i^*$ to be the number of  $i$'s in the sample.  Then, we have $T_{PPS}^*= (\hat{V}_{PPS}^*)^{-1/2}( \hat{Y}_{PPS}^* - Y^*)$, where $\hat{V}_{PPS}^* = n^{-2}\sum_{i=1}^nm_i^*(C_N^*p_{a,i}^{-1}y_{a,i} - \hat{Y}_{PPS}^*)^2$ and $\hat{Y}^*_{PPS} = n^{-1}\sum_{i=1}^n m_i^*C_N^*p_{a,i}^{-1}y_{a,i}$.
	\item[Step 3'.] Repeat the above three steps independently   $M$ times.
\end{enumerate}

The following regularity conditions are required to validate the proposed bootstrap method under PPS sampling.
\begin{enumerate}
	\renewcommand{\labelenumi}{(C\arabic{enumi})}
	\setcounter{enumi}{8}
	\item There exists $\gamma\in(2^{-1},1]$ such that $n \asymp N^{\gamma}$, and the selection probabilities satisfy
	$$C_4\leq Np_i \leq C_5$$
	for $i=1,\ldots,N$, where $C_4$ and $C_5$ are positive constants.\label{cond: C4}\label{cond: C5}
	\item The sequence of finite populations and selection probabilities   satisfy
	$$
		\lim_{N\to\infty}(N^{-2}\sigma_{PPS}^2) =\sigma_3^2,
	$$
	where $\sigma_{PPS}^2 = \sum_{i=1}^Np_i(p_i^{-1}y_i - Y)^2$, and $\sigma_3^2$ is a positive number. \label{cond: C9}
%		\item The distribution $G_{N,PPS}(x)$ converges weakly to a strongly non-lattice distribution $G_{PPS}(x)$, where $G_{N,PPS}(x) = \sum_{i=1}^{N}p_i\delta_{\{y_i/p_i\}}(x)$.
\item
{The distribution $G_{N,PPS}$ is non-lattice, where $G_{N,PPS}$ assigns probability $p_i$ to $p_i^{-1}y_i$ for $i=1,\ldots,N$.}
\label{cond: PPS1}
\end{enumerate}

Condition (C\ref{cond: C4}) regulates the sample size and selection probabilities, and (C\ref{cond: C9}) rules out the degenerate case under PPS sampling. To show (C\ref{cond: C4}) and (C\ref{cond: C9}) can be satisfied simultaneously, take $p_i = N^{-1}$ for $i=1,\ldots,N$. Then, (C\ref{cond: C4}) holds with $0<C_4<1<C_5$, and $\sigma_{PPS}^2 = \sum_{i=1}^Np_i(p_i^{-1}y_i - Y)^2 = N\sum_{i=1}^Ny_i^2 -N^2\bar{Y}^2 $, where $\bar{Y} = N^{-1}Y$.  Thus, $N^{-2}\sigma_{PPS}^2 = N^{-1}\sum_{i=1}^Ny_i^2 -\bar{Y}^2$ converges if both $N^{-1}\sum_{i=1}^Ny_i^2$ and $\bar{Y}$ converge as $N\to\infty$.  Since $G_{N,PPS}$ corresponds to the PPS sampling procedure, condition (C\ref{cond: PPS1}) focuses our attention to the non-lattice case. %{One equivalent condition to (C\ref{cond: PPS1}) is: There does not exist a $(a,h)$, $a\in\mathbb{R}$, $h\neq 0$, such that $\{p_1^{-1}y_1,\ldots,p_N^{-1}y_N\}\subset\{a+jh: j\in\mathbb{Z}\}$, where $\mathbb{Z}$ is the integer set.}

Based on a similar argument made under Poisson sampling, there exists a probability measure $\mathbb{P}_{PPS}$ on $\mathcal{F}=\bigtimes_{N=1}^\infty \mathcal{F}_N$ equipped with the product $\sigma$-algebra $\mathcal{B}$ under PPS sampling.

\begin{lemma}\label{lemma: pps 1}
	Suppose that (C\ref{cond: C4poi}), (C\ref{cond: C4}) and (C\ref{cond: C9}) hold. Then,
	\begin{equation}
	N^{-2}(s_{PPS}^2 - \sigma_{PPS}^2) \to 0 \label{eq: lemma pps 1.2}
	\end{equation}
	as $N\to\infty$ almost surely $(\mathbb{P}_{PPS})$, where  $s_{PPS}^2=n^{-1}\sum_{i=1}^{n}(Z_i-\bar{Z}_n)^2$ is the sample variance of $\{Z_1,\ldots,Z_n\}$. Let ${\mu}^{(3)}_{PPS} = \sum_{i=1}^Np_i(p_i^{-1}y_i - Y)^3$ and $\hat{\mu}^{(3)}_{PPS} = n^{-1}\sum_{i=1}^nZ_i^3+ 2\bar{Z}_n^3 - 3\bar{Z}_nn^{-1}\sum_{i=1}^nZ_i^2$, then
	\begin{equation}
%	&\limsup_{N\to\infty}(N^{-2}\sigma_{PPS}^2) = O(1), \label{eq: lemma PPS 1.1}\\
	N^{-3}{\mu}^{(3)}_{PPS} = O(1) \text{~~~and~~~} N^{-3}(\hat{\mu}^{(3)}_{PPS} - {\mu}^{(3)}_{PPS}) = O_p(n^{-1/2}).\label{eq: lemma PPS 1.2}
	\end{equation}
\end{lemma}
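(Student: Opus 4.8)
The plan is to treat PPS sampling as i.i.d. sampling from the population $\mathcal{F}_N$ equipped with the probability assignment $p_i$, so that $Z_1,\ldots,Z_n$ are i.i.d. draws from the distribution $G_{N,PPS}$ which puts mass $p_i$ on $p_i^{-1}y_i$. Under this view, $\E(Z_1 \mid \mathcal{F}_N) = \sum_i p_i (p_i^{-1}y_i) = Y$, $\Var(Z_1 \mid \mathcal{F}_N) = \sigma_{PPS}^2$, and the third central moment of $Z_1$ is exactly $\mu^{(3)}_{PPS}$. First I would establish the $O(1)$-type bounds on the population quantities. Using (C\ref{cond: C4}), $Np_i \in [C_4,C_5]$, so $|p_i^{-1}y_i - Y| \le N C_4^{-1}|y_i| + |Y| = O(N)\cdot(|y_i| + N^{-1}\sum_j|y_j|)$; combining with the eighth-moment condition (C\ref{cond: C4poi}) (hence all lower moments are $O(1)$ on average) and summing $\sum_i p_i(\cdot)^3$, one gets $N^{-3}\mu^{(3)}_{PPS} = O(1)$, and more crudely $N^{-6}\sum_i p_i (p_i^{-1}y_i - Y)^6 = O(1)$, which I will need for variance bounds below.

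Next I would handle the almost-sure convergence in \eqref{eq: lemma pps 1.2}. Write $s_{PPS}^2 = n^{-1}\sum_{i=1}^n Z_i^2 - \bar Z_n^2$, and note $n^{-1}\sum Z_i^2 - \E(Z_1^2\mid\mathcal{F}_N)$ and $\bar Z_n - Y$ are centered averages of i.i.d.\ terms. Scaling by $N^{-2}$ (resp.\ $N^{-1}$) makes the summands bounded in an averaged moment sense uniformly in $N$: e.g.\ $N^{-4}\E\{(Z_1^2 - \E Z_1^2)^2\mid\mathcal{F}_N\} \le N^{-4}\E(Z_1^4\mid\mathcal{F}_N) = N^{-4}\sum_i p_i p_i^{-4}y_i^4 \le C_4^{-3} N^{-1}\sum_i y_i^4 = O(1)$, and similarly the fourth moment of $N^{-2}Z_1^2$ is $O(1)$ by the eighth-moment assumption. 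Then a standard Borel--Cantelli / Marcinkiewicz--type argument across the product space $(\mathcal{F},\mathcal{B},\mathbb{P}_{PPS})$ — exactly as used for Lemma \ref{lemma: almost sure variance poi} and Lemma \ref{lemma: srs 1} — using $n \asymp N^\gamma$ with $\gamma > 1/2$ to make $\sum_N (\text{var bound})/n$ summable, gives $N^{-2}(n^{-1}\sum Z_i^2 - \E Z_1^2) \to 0$ a.s.\ and $N^{-1}(\bar Z_n - Y)\to 0$ a.s.; since $\bar Z_n^2 - Y^2 = (\bar Z_n - Y)(\bar Z_n + Y)$ and $N^{-1}(\bar Z_n + Y) = O(1)$ a.s., $N^{-2}(\bar Z_n^2 - Y^2)\to 0$ a.s. Assembling, $N^{-2}(s_{PPS}^2 - \sigma_{PPS}^2)\to 0$ a.s.

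Finally, for the $O_p(n^{-1/2})$ rate on $\hat\mu^{(3)}_{PPS}$, I would note that $\hat\mu^{(3)}_{PPS} = n^{-1}\sum Z_i^3 + 2\bar Z_n^3 - 3\bar Z_n n^{-1}\sum Z_i^2$ is the plug-in (biased sample) third central moment, a smooth function of the sample moments $\hat m_k = n^{-1}\sum Z_i^k$ for $k=1,2,3$. Each $N^{-k}(\hat m_k - \E Z_1^k\mid\mathcal{F}_N)$ is a centered i.i.d.\ average whose per-term variance is $O(1)$ after scaling (using up to the sixth moment of $N^{-1}Z_1$, controlled by (C\ref{cond: C4poi})), hence $N^{-k}(\hat m_k - \E Z_1^k) = O_p(n^{-1/2})$. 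Expanding the polynomial identity for the central moment and using $N^{-1}\bar Z_n = Y N^{-1} + O_p(n^{-1/2})$, $N^{-2}\hat m_2 = O_p(1)$, etc., the difference $N^{-3}(\hat\mu^{(3)}_{PPS} - \mu^{(3)}_{PPS})$ is a sum of products of these $O_p(n^{-1/2})$ deviations with $O_p(1)$ factors, yielding $O_p(n^{-1/2})$. The main obstacle is bookkeeping the $N$-powers so that every scaled summand genuinely has an $O(1)$ averaged moment under (C\ref{cond: C4poi})--(C\ref{cond: C9}); once the scalings are fixed, both the a.s.\ convergence and the stochastic-order claim follow from the same i.i.d.\ moment machinery already deployed for Lemmas \ref{lemma: almost sure variance poi} and \ref{lemma: srs 1}, with (C\ref{cond: PPS1}) not needed here (it enters only the Edgeworth expansions).
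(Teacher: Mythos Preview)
Your proposal is correct and follows essentially the same route as the paper: view $Z_1,\ldots,Z_n$ as i.i.d.\ draws from $G_{N,PPS}$, control the scaled moments $E(N^{-\delta}|Z_1|^\delta\mid\mathcal{F}_N)=O(1)$ via (C\ref{cond: C4}) and (C\ref{cond: C4poi}), decompose $s_{PPS}^2$ and $\hat\mu^{(3)}_{PPS}$ into raw sample moments, and get the $O_p(n^{-1/2})$ rate by a variance--Markov computation on each $N^{-\zeta}(\hat m_\zeta - E Z_1^\zeta)$. The only noticeable difference is in the almost-sure step: the paper simply invokes the strong law of large numbers for $n^{-1}\sum N^{-2}Z_{N,i}^2$ and $n^{-1}\sum N^{-1}Z_{N,i}$, whereas you propose a fourth-moment Borel--Cantelli argument (as in Lemmas \ref{lemma: almost sure variance poi} and \ref{lemma: srs 1}) using $2\gamma>1$ for summability; your version is arguably more careful given the triangular-array structure across the product space, but the substance is the same.
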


Lemma \ref{lemma: pps 1}  shows  convergence properties of estimators of the variance and third central moment. The next theorem deals with the Edgeworth expansion for the distribution of $T_{PPS}$.
\begin{thm}\label{theorem: pps edgeworth origin}
	Suppose that  (C\ref{cond: C4poi}), (C\ref{cond: C4})--(C\ref{cond: PPS1}) hold. Then,
	\begin{equation}
	\hat{F}_{PPS}(z) = \Phi(z)  + \frac{\hat{\mu}^{(3)}_{PPS}}{6\sqrt{n} s_{PPS}^3}(2z^2+1)\phi(z) + o_p(n^{-1/2})\label{eq: pps ori edg}
	\end{equation}
	%almost surely $(\mathbb{P}_{PPS})$
	uniformly in $z\in \mathbb{R}$, where $\hat{F}_{PPS}=\mathbb{P}_{PPS}\left(T_{PPS}\leq z \right)$ is the cumulative distribution function of $T_{PPS}$ under PPS sampling.
\end{thm}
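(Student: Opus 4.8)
\textbf{Proof proposal for Theorem \ref{theorem: pps edgeworth origin}.}

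The plan is to reduce the studentized statistic $T_{PPS}$ to a smooth function of sample means of i.i.d.\ random vectors and then invoke a classical Edgeworth expansion for such statistics, along the lines of \citet[\S 2]{babu1985edgeworth} or \citet{BhattacharyaGhosh1978}. Under PPS sampling the draws $a_1,\ldots,a_n$ are i.i.d., so $Z_1,\ldots,Z_n$ are i.i.d.\ with mean $Y$ under the measure induced by $G_{N,PPS}$. Writing $\bar Z_n=n^{-1}\sum_{i=1}^n Z_i$ and $s_{PPS}^2=n^{-1}\sum_{i=1}^n(Z_i-\bar Z_n)^2$, we have $T_{PPS}=\sqrt{n}\,(\bar Z_n-Y)/s_{PPS}$, which is exactly the classical studentized mean of the i.i.d.\ sample $\{Z_i\}$. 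The only subtlety compared to the textbook setting is that the underlying distribution $G_{N,PPS}$ changes with $N$; this is why we need the moment bounds of Lemma \ref{lemma: pps 1} (to control the variance $N^{-2}\sigma_{PPS}^2$ and third central moment $N^{-3}\mu_{PPS}^{(3)}$ uniformly) together with the uniform eighth-moment condition (C\ref{cond: C4poi}), the non-degeneracy (C\ref{cond: C9}), and the non-lattice condition (C\ref{cond: PPS1}).

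First I would verify the hypotheses of the triangular-array Edgeworth expansion: by (C\ref{cond: C4}) and (C\ref{cond: C4poi}), $Z_i=p_{a,i}^{-1}y_{a,i}$ has $N^{-2}\mathbb{E}(Z_i-Y)^2\to\sigma_3^2>0$ by (C\ref{cond: C9}), and $N^{-4}\mathbb{E}\lvert Z_i-Y\rvert^4=O(1)$ (indeed a higher moment is available, since $Np_i$ is bounded below and $N^{-1}\sum y_i^8=O(1)$), so after rescaling $Z_i$ by $N^{-1}$ the array has uniformly bounded fourth (and more) moments, uniformly non-degenerate variance, and satisfies Cramér's condition uniformly in $N$ thanks to (C\ref{cond: PPS1}) together with the moment control. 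The standard Edgeworth expansion for the studentized mean then gives, uniformly in $z$,
\begin{equation*}
\mathbb{P}_{PPS}(T_{PPS}\le z)=\Phi(z)+\frac{\mu_{PPS}^{(3)}}{6\sqrt{n}\,\sigma_{PPS}^{3}}\bigl(2z^2+1\bigr)\phi(z)+o(n^{-1/2}),
\end{equation*}
where the polynomial $2z^2+1$ is the well-known one for the studentized (rather than standardized) mean: it arises as the sum of the $-\tfrac13\kappa_3(z^2-1)$ term from the numerator and the $\tfrac12\kappa_3\,z^2$-type term coming from the Taylor expansion of $s_{PPS}^{-1}$ about $\sigma_{PPS}$, since for the studentized mean the relevant skewness-type coefficient combines to $2z^2+1$ (see \citet[\S 2.4]{hall2013bootstrap}).

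Finally I would replace the population quantities $\mu_{PPS}^{(3)}$ and $\sigma_{PPS}$ in the leading correction term by their sample estimators $\hat\mu_{PPS}^{(3)}$ and $s_{PPS}$. By Lemma \ref{lemma: pps 1}, $N^{-3}(\hat\mu_{PPS}^{(3)}-\mu_{PPS}^{(3)})=O_p(n^{-1/2})$ and $N^{-2}(s_{PPS}^2-\sigma_{PPS}^2)\to 0$ a.s.; since $N^{-3}\mu_{PPS}^{(3)}=O(1)$ and $N^{-2}\sigma_{PPS}^2\to\sigma_3^2>0$, the ratio $\hat\mu_{PPS}^{(3)}/s_{PPS}^3$ differs from $\mu_{PPS}^{(3)}/\sigma_{PPS}^3$ by $O_p(n^{-1/2})$, hence the substitution only perturbs the $n^{-1/2}$-order term by $O_p(n^{-1})=o_p(n^{-1/2})$, and $(2z^2+1)\phi(z)$ is bounded, which yields (\ref{eq: pps ori edg}). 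As with Theorem \ref{theorem: edgeworth expansion Poisson}, this substitution is what turns the deterministic $o(n^{-1/2})$ into $o_p(n^{-1/2})$ and puts the expansion in a form directly comparable to the bootstrap expansion to follow.

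\textbf{Main obstacle.} The delicate point is the uniform-in-$N$ validity of the Edgeworth expansion for this triangular array, specifically checking a Cramér-type smoothness condition uniformly: the distribution $G_{N,PPS}$ is only assumed non-lattice (C\ref{cond: PPS1}), not \emph{strongly} non-lattice as in the SRS case (C\ref{cond: srs1}), and $n$ may grow as slowly as $N^{\gamma}$ with $\gamma$ close to $1/2$. Handling this — presumably by appealing to a version of the expansion (as in \citet{babu1985edgeworth}) that requires only non-latticeness for sums of $n$ i.i.d.\ summands with uniformly controlled moments, and verifying their hypotheses carry over to the changing-population array — is where the real work lies; the moment bookkeeping and the Taylor expansion of the studentization are routine by comparison.
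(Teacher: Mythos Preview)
Your proposal is correct and follows essentially the same route as the paper: rewrite $T_{PPS}$ as the classical studentized mean of the i.i.d.\ (rescaled) sample $N^{-1}Z_i$, invoke a known one-term Edgeworth expansion for the studentized mean under a non-lattice assumption and a third-moment bound, and then use Lemma~\ref{lemma: pps 1} to replace $\mu_{PPS}^{(3)}/\sigma_{PPS}^3$ by $\hat\mu_{PPS}^{(3)}/s_{PPS}^3$, absorbing the discrepancy into $o_p(n^{-1/2})$. The only real difference is the citation: the paper appeals to \citet{Hall1987Edgeworth} directly for the expansion of the studentized mean under a bare non-lattice hypothesis (rather than Cram\'er's condition), so the ``main obstacle'' you flag---that (C\ref{cond: PPS1}) gives only non-lattice, not strongly non-lattice---is handled by that reference and the paper does not discuss the triangular-array issue any further than you do.
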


Based on the result in Theorem \ref{theorem: pps edgeworth origin}, the Wald-type method may provide inefficient inference results for $Y$ compared with the proposed bootstrap method if the sample size is small and  ${\mu}^{(3)}_{PPS}\neq0$.

\begin{thm}\label{th: last one}
	Suppose that  (C\ref{cond: C4poi}), (C\ref{cond: C4})--(C\ref{cond: PPS1}) hold. Then, we have
	\begin{equation}
	\hat{F}_{PPS}^*(z) = \Phi(z)  + \frac{\hat{\mu}^{(3)}_{PPS}}{6\sqrt{n} s_{PPS}^3}(2z^2+1)\phi(z) + o_p(n^{-1/2})\label{eq: pps boot edg}
	\end{equation}
	%almost surely conditional on the sample obtained by PPS sampling in probability for
	uniformly in $z\in\mathbb{R}$, where $\hat{F}_{PPS}^*(z)$ is the  cumulative distribution function of $T_{PPS}^*$ conditional on the bootstrap finite population $\mathcal{F}_N^*$.
	
\end{thm}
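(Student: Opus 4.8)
The plan is to mirror the strategy used for Poisson sampling (Theorem~\ref{theorem: e 2}) and SRS (Theorem~\ref{theorem: srs 2}): conditional on the bootstrap finite population $\mathcal{F}_N^*$, the statistic $T_{PPS}^*$ is \emph{itself} a Hansen--Hurwitz-type studentized statistic arising from PPS sampling from the population $\mathcal{F}_N^*$ with selection probabilities $\{(C_N^*)^{-1}p_i^*\}$. Therefore I would first invoke Theorem~\ref{theorem: pps edgeworth origin} (or rather the underlying Edgeworth expansion for studentized sums of i.i.d.\ random variables it rests on, e.g.\ the result in Section~2 of \citet{babu1985edgeworth} or the Bhattacharya--Ghosh machinery) \emph{applied to the bootstrap world}. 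Under the equivalent implementation in Step~2', the bootstrap draws $Z_i^\dagger := C_N^*(p_{a,l^{(i)}})^{-1}y_{a,l^{(i)}}$ are i.i.d.\ (conditionally on $\mathcal{F}_N^*$) with values $C_N^* p_{a,j}^{-1}y_{a,j} = (C_N^*/p_{a,j})\cdot y_{a,j}$ taken with probability $p_j^\dagger = (C_N^*)^{-1}N_j^* p_{a,j}$. The conditional mean of $Z_i^\dagger$ is exactly $Y^*$, so $\hat Y_{PPS}^* - Y^*$ is a centered i.i.d.\ sum and $\hat V_{PPS}^*$ its natural variance estimator; the abstract one-term Edgeworth expansion then gives
\begin{equation*}
\hat F_{PPS}^*(z) = \Phi(z) + \frac{\hat\mu_{PPS}^{*(3)}}{6\sqrt n\, (s_{PPS}^*)^3}(2z^2+1)\phi(z) + o_p(n^{-1/2}),
\end{equation*}
where $s_{PPS}^{*2}$ and $\hat\mu_{PPS}^{*(3)}$ are the conditional variance and third central moment of $Z_i^\dagger$ given $\mathcal{F}_N^*$.

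The second step is the bookkeeping that identifies these bootstrap-world moments with the sample quantities already appearing in Theorem~\ref{theorem: pps edgeworth origin}. A direct computation gives $s_{PPS}^{*2} = \sum_{j=1}^n p_j^\dagger (C_N^* p_{a,j}^{-1}y_{a,j})^2 - Y^{*2}$ and similarly for the third central moment; I would then show
\begin{equation*}
N^{-2}\bigl(s_{PPS}^{*2} - s_{PPS}^2\bigr) \xrightarrow{P} 0, \qquad N^{-3}\bigl(\hat\mu_{PPS}^{*(3)} - \hat\mu_{PPS}^{(3)}\bigr) \xrightarrow{P} 0,
\end{equation*}
conditionally on the data, which follows because the multinomial counts satisfy $N_j^*/N = \rho_j + O_p(N^{-1/2})$ with $\rho_j = p_{a,j}^{-1}/\sum_k p_{a,k}^{-1}$, so $C_N^*/N \to \sum_k p_{a,k}^{-1}/n \cdot$(something) converges and the reweighting $p_j^\dagger$ collapses to the Hansen--Hurwitz weights up to lower-order terms — the algebra is the same as the $Y^* \approx \hat Y_{PPS}$, $\hat V_{PPS}^* \approx \hat V_{PPS}$ reductions carried out for the other two designs. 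Since $\hat\mu_{PPS}^{(3)}/(6\sqrt n s_{PPS}^3) = O_p(n^{-1/2})$ by Lemma~\ref{lemma: pps 1}, replacing starred moments by unstarred ones only perturbs the $n^{-1/2}$-term by an $o_p(n^{-1/2})$ amount, giving \eqref{eq: pps boot edg}.

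The main obstacle is verifying the \emph{non-lattice (Cram\'er) condition in the bootstrap world} needed to legitimize the Edgeworth expansion conditionally on $\mathcal{F}_N^*$. Condition (C\ref{cond: PPS1}) is assumed for the original population $G_{N,PPS}$, but the bootstrap population is a random multinomial reweighting of only the $n$ sampled values, and one must show that with $\mathbb{P}_{PPS}$-probability tending to one the conditional distribution of $Z_1^\dagger$ given $\mathcal{F}_N^*$ remains ``non-lattice enough'' for the Esseen smoothing bound to contribute only $o(n^{-1/2})$. I would handle this exactly as in the Poisson case: condition (C\ref{cond: PPS1}) on the population, together with (C\ref{cond: C4}) and the moment bound (C\ref{cond: C4poi}), implies that with high probability the sampled values $\{p_{a,i}^{-1}y_{a,i}\}_{i=1}^n$ contain a subset of size $O(\log n)$ with characteristic functions bounded away from $1$ on compact sets excluding the origin, and this property is inherited by the reweighted bootstrap population since the weights $N_i^*$ are all $\Theta_p(N/n)$. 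The remaining details — uniformity of the $o_p$ over $z\in\mathbb{R}$, and the passage from the $M\to\infty$ empirical approximation to $\hat F_{PPS}^*$ itself — are routine and identical to the corresponding arguments under Poisson sampling and SRS, so I would simply refer back to those proofs.
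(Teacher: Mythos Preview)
Your overall architecture is correct and matches the paper: recognize that, conditional on $\mathcal{F}_N^*$, the bootstrap statistic $T_{PPS}^*$ is a studentized mean of i.i.d.\ draws, apply the one-term Edgeworth expansion for such statistics (the paper cites \citet{Hall1987Edgeworth}), and then replace the bootstrap-population moments $(\sigma_{N,PPS}^*)^2,\mu_{N,PPS}^{(3)*}$ by the sample moments $s_{PPS}^2,\hat\mu_{PPS}^{(3)}$ using the multinomial concentration of the $N_i^*$. The paper also establishes $C_N^*=1+o_p(1)$ (not $C_N^*/N$ as you wrote; your algebra there is garbled but the correct limit drops out once you note $E_*(C_N^*)=Nn/\sum_j p_{a,j}^{-1}$ and $n^{-1}\sum_j p_{a,j}^{-1}\to N$).

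Where you and the paper part ways is the non-lattice verification, and your proposed route is both unnecessary and ill-suited. Under PPS the bootstrap draws $Z_i^\dagger$ are genuinely i.i.d., so the relevant Cram\'er condition concerns the \emph{single} characteristic function $\psi_{Z,N}^*(t)=E_{**}\{\exp(\iota t N^{-1}Z_i^\dagger)\}$; the Poisson-style product-over-a-subset argument (conditions (C\ref{cond: extra poi})--(C\ref{cond: extra poi 02})) is tailored to independent-but-not-identically-distributed summands and has no analogue here. The paper instead shows directly that
\[
\sup_{|t|\le t_0}\bigl|\psi_{Z,N}^*(t)-\psi_{Z,N}(t)\bigr|\to 0 \quad\text{in probability}
\]
for every fixed $t_0>0$, by writing $\psi_{Z,N}^*(t)=(C_N^*)^{-1}\sum_{i=1}^n N_i^* p_{a,i}\exp(\iota t N^{-1}C_N^* y_{a,i}/p_{a,i})$, using $C_N^*=1+o_p(1)$ and $nN_i^*p_{a,i}=1+o_p(1)$, and then averaging the bounded functions $\cos(\cdot),\sin(\cdot)$ via the SLLN for the original PPS draws. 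This transfers the non-lattice property from $G_{N,PPS}$ (condition (C\ref{cond: PPS1})) to the bootstrap distribution with probability tending to one, which is exactly what the i.i.d.\ Edgeworth expansion requires.
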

Theorem \ref{th: last one} shows the Edgeworth expansion for the cumulative distribution function of $T_{PPS}^*$ based on the proposed bootstrap method. By comparing (\ref{eq: pps ori edg}) in Theorem \ref{theorem: pps edgeworth origin} with (\ref{eq: pps boot edg}) in Theorem \ref{th: last one}, we have shown that the proposed bootstrap method is second-order accurate under PPS sampling.
\section{Simulation study}
\subsection{Single-stage sampling designs} \label{sec: single stage}
We conduct a simulation study based on single-stage sampling designs in this section. A  finite population $ \mathcal{F}_N = \{y_1,\ldots,y_N\}$ is generated by
\begin{eqnarray}
y_i \sim \mathrm{Exp(10)}\notag
\end{eqnarray}
for $i=1,\ldots,N$,
where $\mathrm{Exp(\lambda)}$ is an exponential distribution with a scale parameter $\lambda$, and the population size is $N=500$, which is assumed to be known.  The size measure is simulated by $z_i= \log(3 + s_i)$ for $i=1,\ldots,N$, where $s_i\mid y_i \sim \mbox{Exp}(y_i)$. The  expected sample size is $n_0\in\{10,100\}$. We are interested in constructing a 90\% confidence interval for the finite population mean $\bar{Y}$ by survey data under the following sampling designs, and its true value is around 9.7.
%\begin{eqnarray}
%	&z_i\mid y_i \sim 1 + s_i \notag \\
%	&s_i\mid y_i \sim \mbox{Gamma}(\alpha_z,\gamma_z),\notag
%\end{eqnarray}
%where $(\alpha_z,\gamma_z) = (y_i,1)$ and the parameters are constructed such that  $z_i\geq0$ for $i=1,\ldots,N$.
\begin{enumerate}
	\item Poisson sampling. The first-order inclusion probability is $\pi_i = n_0z_i\left(\sum_{j=1}^Nz_j\right)^{-1}$ for $i=1,\ldots,N$, and its expected sample size is $n_0$.
	\item SRS with sample size $n_0$.
	\item PPS sampling. The selection probability for this design is $p_i = z_i\left(\sum_{j=1}^Nz_j\right)^{-1}$ for $i=1,\ldots,N$, and the sample size is $n_0$.
\end{enumerate}

Based on a sample, denote  $\tilde{V}$ to be the design-unbiased variance estimator of $\tilde{Y}$, where $\tilde{Y}$ is the design-unbiased estimate of $\bar{Y}$ under a specific sampling design. %First, we estimate
 We consider the following methods to construct the 90\% confidence interval.
\begin{itemize}
	\item[Method I.] Proposed bootstrap method by setting $M=1\,000$.  Denote $q_{B,0.05}$ and $q_{B,0.95}$   to be the 5\%-th and 95\%-th sample quantiles of  $\{(\tilde{V}^{*(m)})^{-1/2}(\tilde{{Y}}^{*(m)} -\bar{Y}^{*(m)}): m=1,\ldots,M\}$ obtained by the proposed bootstrap method, where $\tilde{V}^{*(m)}$, $\tilde{Y}^{*(m)}$ and  $\bar{Y}^{*(m)}$ are the bootstrap counterparts of $\tilde{V}$, $\tilde{Y}$ and $\bar{Y}$ in the $m$-th repetition. Then, a 90\% confidence interval for $\bar{Y}$ can be constructed by
	$$
	(\tilde{Y} - q_{B,0.95}\tilde{V}^{1/2}, \tilde{Y} - q_{B,0.05}\tilde{V}^{1/2}).
	$$
	
	\item[Method II.] Wald-type method. A Wald-type 90\% confidence interval for $\bar{Y}$ is obtained by
	$$
	(\tilde{Y} - q_{0.95}\tilde{V}^{1/2}, \tilde{Y} - q_{0.05}\tilde{V}^{1/2}),
	$$
	where $q_{0.05}$ and $q_{0.95}$ are the 5\%-th and 95\%-th quantiles of the standard normal  distribution.

\end{itemize}

We conduct $1\,000$ Monte Carlo simulations for each sampling design, and the two methods are compared in terms of the coverage rate and the length of the constructed 90\% confidence interval.
Table \ref{tab: confidence interval} summarizes the simulation results. When the sample size is small, the proposed bootstrap method is more preferable in the sense that its coverage rates are closer to 0.9 compared with the Wald-type method under the three sampling designs. The confidence interval constructed by the proposed bootstrap  method is wider compared with that by the Wald-type method.  As the sample size increases to $n_0=100$, the performance of the two methods is approximately the same  in the sense that the coverage rates of both methods are close to 0.9, and confidence interval lengths are approximately the same.  %Under three sampling designs, the coverage rates for the  nonparametric Bayesian bootstrap method are similar with ones by the Wald-type method with an exception when $n_0=100$ and the PPS sampling design is used, and the coverage rate of the nonparametric Bayesian bootstrap approach is 0.86, which is off its nominal truth 0.9.  Furthermore, there is no unique guideline to choose the parameters $L$ and $F$ for the nonparametric Bayesian bootstrap method, and the results may vary for different choices of $L$ and $F$.
\begin{table}[!t]
	\centering
	\caption{Coverage rate and length of the constructed 90\% confidence interval for the proposed bootstrap method (Bootstrap) and the Wald-type method (Wald-type) under single-stage sampling designs, including Poisson sampling  (Poisson), SRS and PPS sampling (PPS).  ``C.R.'' stands for the coverage rate, and ``C.L.'' presents the Monte Carlo mean of the lengths of the constructed confidence interval. }
	\label{tab: confidence interval}
	\begin{tabular}{ccrrrrr}
		\hline
		\multirow{2}{*}{Design} & \multirow{2}{*}{Method} & \multicolumn{2}{c}{$n_0=10$}&&\multicolumn{2}{c}{$n_0=100$}\\
		&& C.R. & C.L. && C.R. & C.L. \\
		\hline
		
		%		\hline
		\multirow{2}{*}{Poisson} & Bootstrap & 0.90 & 15.5 &  & 0.90 & 3.6 \\
		& Wald-type & 0.84 & 12.1 &  & 0.88 & 3.6 \\
		%& T.S.I.S.M. & 0.84 & 12.1 &  & 0.87 & 3.5 \\
		&  &  &  &  &  &  \\
		\multirow{2}{*}{SRS} & Bootstrap & 0.90 & 13.0 &  & 0.89 & 2.8 \\
		& Wald-type & 0.83 &  9.1 &  & 0.89 & 2.8 \\
		%& T.S.I.S.M. & 0.80 &  8.4 &  & 0.89 & 2.7 \\
		&  &  &  &  &  &  \\
		\multirow{2}{*}{PPS} & Bootstrap & 0.88 & 10.3 &  & 0.90 & 2.6 \\
		& Wald-type & 0.83 &  7.5 &  & 0.89 & 2.6 \\
		%& T.S.I.S.M. & 0.80 &  7.2 &  & 0.89 & 2.5 \\
		%		& N.P.B.B. & 0.84& 9.6 && 0.86 & 3.0\\
		\hline

	\end{tabular}
\end{table}

In addition, we also compare the two methods in terms of approximating the probability $\mathbb{P}\{\tilde{V}^{-1/2}(\tilde{Y}-\bar{Y})\leq z\}$, which is obtained by 10\,000 Monte Carlo simulations. We set $z\in\{-0.5,-0.25,-0.1,0,0.1,0.25,0.5\}$ as done by \citet{Lai1993Edgeworth}. Table \ref{tab: distribution} summarizes the simulation results. For both sample sizes, the proposed bootstrap method can approximate the target distribution well, but the performance of the Wald-type method is not as good as the proposed one when sample size is small.

\begin{table}[!t]
	\centering
	\caption{Values of $P_z =\mathbb{P}\{\tilde{V}^{-1/2}(\tilde{Y}-\bar{Y})\leq z\}$,  the normal approximation $\Phi(z)$ and the bootstrap approximation Boot$_z$ for three sampling designs including Poisson sampling  (Poisson), SRS and PPS sampling (PPS). For convenience, we include the values $\Phi(z)$ for both sample sizes. }
	\label{tab: distribution}
	\begin{tabular}{ccccccccc}		\hline
		\multirow{2}{*}{Design} &\multirow{2}{*}{$z$} &  \multicolumn{3}{c}{$n_0=10$}&&\multicolumn{3}{c}{$n_0=100$}\\
		&&$P_z$& $\Phi(z)$ & Boot$_z$&& $P_z$& $\Phi(z)$ & Boot$_z$ \\
		\hline
\multirow{7}{*}{Poisson} & -0.5 & 0.37 & 0.31 & 0.36 &  & 0.32 & 0.31 & 0.32 \\
& -0.25 & 0.45 & 0.40 & 0.44 &  & 0.41 & 0.40 & 0.41 \\
& -0.1 & 0.50 & 0.46 & 0.49 &  & 0.47 & 0.46 & 0.47 \\
& 0 & 0.54 & 0.50 & 0.53 &  & 0.51 & 0.50 & 0.51 \\
& 0.1 & 0.58 & 0.54 & 0.57 &  & 0.55 & 0.54 & 0.55 \\
& 0.25 & 0.64 & 0.60 & 0.63 &  & 0.61 & 0.60 & 0.61 \\
& 0.5 & 0.73 & 0.69 & 0.73 &  & 0.70 & 0.69 & 0.70 \\
&  &  &  &  &  &  &  &  \\
\multirow{7}{*}{SRS} & -0.5 & 0.37 & 0.31 & 0.34 &  & 0.32 & 0.31 & 0.32 \\
& -0.25 & 0.45 & 0.40 & 0.42 &  & 0.41 & 0.40 & 0.41 \\
& -0.1 & 0.50 & 0.46 & 0.48 &  & 0.47 & 0.46 & 0.47 \\
& 0 & 0.54 & 0.50 & 0.52 &  & 0.51 & 0.50 & 0.51 \\
& 0.1 & 0.58 & 0.54 & 0.56 &  & 0.55 & 0.54 & 0.55 \\
& 0.25 & 0.63 & 0.60 & 0.62 &  & 0.61 & 0.60 & 0.61 \\
& 0.5 & 0.73 & 0.69 & 0.71 &  & 0.70 & 0.69 & 0.70 \\
&  &  &  &  &  &  &  &  \\
\multirow{7}{*}{PPS} & -0.5 & 0.37 & 0.31 & 0.34 &  & 0.33 & 0.31 & 0.32 \\
& -0.25 & 0.45 & 0.40 & 0.42 &  & 0.42 & 0.40 & 0.41 \\
& -0.1 & 0.50 & 0.46 & 0.48 &  & 0.47 & 0.46 & 0.47 \\
& 0 & 0.54 & 0.50 & 0.52 &  & 0.51 & 0.50 & 0.51 \\
& 0.1 & 0.57 & 0.54 & 0.56 &  & 0.55 & 0.54 & 0.55 \\
& 0.25 & 0.63 & 0.60 & 0.62 &  & 0.61 & 0.60 & 0.61 \\
& 0.5 & 0.72 & 0.69 & 0.71 &  & 0.70 & 0.69 & 0.70 \\

		\hline

	\end{tabular}
\end{table}

\subsection{Two-stage sampling designs}
In this section, we test the performance of the proposed method under two-stage sampling designs. A  finite population $ \mathcal{F}_N = \{y_{i,j}:i=1,\ldots,H; j=1,\ldots,N_i\}$ is generated by
\begin{eqnarray*}
	y_{i,j} &=& 50 + a_i + e_{i,j},\\
	a_i &\sim&  N(0,50), \\
	e_{i,j} &\sim& \mathrm{Exp}(20),\\
	N_i\mid a_i &\sim& \mathrm{Poisson}(q_i) + c_0
\end{eqnarray*}
for $i=1,\ldots,H$ and $j=1,\ldots,N_i$,
where $\mathrm{Poisson}(\lambda)$ is a Poisson distribution with a rate parameter $\lambda$, $q_i = (a_i-25)^2/20$, $c_0=40$ is the minimum cluster size, and  $H=100$ is the number of clusters in the finite population. The finite population size is $N=7\,129$, and the cluster sizes range from 43 to 129. We assume that the finite population size $N$ and cluster sizes $N_1,\ldots,N_H$ are known.
We are interested in constructing a 90\% confidence interval for the finite population mean  $\bar{Y} = N^{-1}\sum_{i=1}^H\sum_{j=1}^{N_i}y_{i,j}$, where the true value of $\bar{Y}$ is approximately 70.5.

We consider two different sampling designs for the first stage; one is  Poisson sampling, and the other one is  PPS sampling. The first-order inclusion probability (selection probability) of the $i$-th cluster  is proportional to its cluster size $N_i$ under  Poisson (PPS) sampling for $i=1,\ldots,H$. SRS is conducted within each selected cluster independently in the second stage. The expected sample size of the first-stage sampling is $n_1$, and that of the second-stage sampling is $n_2$.  In this simulation, we consider two scenarios for the sample sizes, that is,  $(n_1,n_2)=(5,10)$ and $(n_1,n_2)=(10,30)$.

The derivations of the design-unbiased estimator $\tilde{Y}$ and its variance estimator $\tilde{V}$ under the two-stage sampling designs  in this simulation study are presented in Appendix \ref{appd: B}.  We consider the following methods to construct the 90\% confidence intervals for the parameters of interest.
\begin{itemize}
	\item[Method I.] The proposed method extended to a two-stage sampling design. This method is approximately the same as that mentioned in Section \ref{sec: single stage} with the following two steps to bootstrap the  finite population, and we set $M=1\,000$ for this method.
	\begin{enumerate}
		\item[Step 1.] Use the proposed method to bootstrap the $H$ clusters by treating them as ``elements'', and the original sample within each selected cluster are replicated accordingly.
		\item[Step 2.] For each bootstrap cluster, apply the proposed method to bootstrap the cluster finite population independently.
	\end{enumerate}
	\item[Method II.] Wald-type method, and it is the same as the one discussed in Section \ref{sec: single stage}.
	%\item[Method III.] The nonparametric Bayesian bootstrap method discussed by \citet{dong2014nonparametric}, and only the first step needs modification. For Poisson sampling, suppose that $\hat{n}_1$ clusters are selected in the first stage. Without loss of generality, we assume that the first $\hat{n}_1$ clusters are sampled. Draw a sample with size $\hat{n}_1-1$ from $\{1,\ldots,\hat{n}_1\}$ by a simple random sampling design with replacement. Denote $m_i^\ddagger$ to be the number of times that cluster $h$ is selected, and denote $w_{j}^{(h)\ddagger} \propto m_i^\ddagger\hat{n}_1w_{j}^{(h)}(\hat{n}_1-1)^{-1}$, where $w_{j}^{(h)} = (N_iN^{-1}n_2N_i^{-1})^{-1} \propto 1$ is the weight used to obtain $\tilde{Y}$ since $n_2$ is fixed for each two-stage sampling design.
	
\end{itemize}

We conduct $1\,000$ Monte Carlo simulations for each  scenario. Table \ref{tab: two stage} summarizes the coverage rate and average length of the constructed 90\% confidence interval for the finite population mean. The coverage rates of the proposed bootstrap method are closer to 0.9 even when the sample size is limited. However, the coverage rates of the commonly used Wald-type method are not as good as the proposed bootstrap method. Specifically, the coverage rates of the Wald-type method are only around 0.86 for three scenarios, and it improves to 0.88 when sample size is large under Poisson sampling. The confidence intervals of the proposed bootstrap method are wider than those of the Wald-type method when sample size is small.

\begin{table}[!t]
	\centering
	%\Dr_Kim\Bootstrap method_Wang_Kim\code\simulation_second_part\code_for_the_manuscript_additional_v10\data for the paper
	\caption{Coverage rate and length of the 90\% confidence interval for $\bar{Y}$ by the proposed bootstrap method (Bootstrap) and the Wald-type method (Wald-type)  under two-stage sampling designs. The first column show the first-stage sample designs, that is, Poisson sampling  (Poisson) and PPS sampling (PPS), and  SRS is used in the second stage.  ``C.R.'' shows the coverage rate, and ``C.L.'' presents the Monte Carlo mean of the length for the 90\% confidence interval. }\label{tab: two stage}
	
	\begin{tabular}{cccrr}
		\hline
		Design & $(n_1,n_2)$ & Method & C.R. & C.L.\\
		\hline
	\multirow{5}{*}{Poisson} & \multirow{2}{*}{(5,10)} & Bootstrap & 0.90 & 114.08 \\
	&  & Wald-type & 0.85 &  98.58 \\
	&  &  &  &  \\
	& \multirow{2}{*}{(10,30)} & Bootstrap & 0.90 & 73.92 \\
	&  & Wald-type & 0.88 & 68.66 \\
	&  &  &  &  \\
	\multirow{5}{*}{PPS} & \multirow{2}{*}{(5,10)} & Bootstrap & 0.89 & 17.56 \\
	&  & Wald-type & 0.85 & 14.57 \\
	&  &  &  &  \\
	& \multirow{2}{*}{(10,30)} & Bootstrap & 0.90 & 9.40 \\
	&  & Wald-type & 0.86 & 8.24 \\
	
		\hline
	\end{tabular}
\end{table}

As in  Section \ref{sec: single stage}, we also compare those two methods in terms of approximating  $\mathbb{P}\{\tilde{V}^{-1/2}(\tilde{Y}-\bar{Y})\leq z\}$, which is obtained by 10\,000 Monte Carlo simulations. We set $z\in\{-0.5,-0.25,-0.1,0,0.1,0.25,0.5\}$. Table \ref{tab: distribution two} summarizes the simulation results. For both sample sizes, the proposed bootstrap method can approximate the target distribution well, but the performance of the Wald-type method is not as good as the proposed one especially when the sample size is small.

\begin{table}[!t]
	\centering
	\caption{Values of $P_z =\mathbb{P}\{\tilde{V}^{-1/2}(\tilde{Y}-\bar{Y})\leq z\}$, the normal approximation $\Phi(z)$ and the bootstrap approximation Boot$_z$ under two-stage sampling designs. The first column show the first-stage sample designs, that is, Poisson sampling  (Poisson) and PPS sampling (PPS), and  SRS is used in the second stage.  }
	\label{tab: distribution two}
	\begin{tabular}{cccccc}		\hline
		Design & $(n_1,n_2)$ & $z$ & $P_z$& $\Phi(z)$ &Boot$_z$\\
		\hline
		\multirow{15}{*}{Poisson} & \multirow{7}{*}{(5,10)} & -0.5 & 0.35 & 0.31 & 0.35 \\
		&  & -0.25 & 0.43 & 0.40 & 0.43 \\
		&  & -0.1 & 0.48 & 0.46 & 0.49 \\
		&  & 0 & 0.53 & 0.50 & 0.52 \\
		&  & 0.1 & 0.58 & 0.54 & 0.56 \\
		&  & 0.25 & 0.62 & 0.60 & 0.62 \\
		&  & 0.5 & 0.73 & 0.69 & 0.72 \\
		&  &  &  &  &  \\
		& \multirow{7}{*}{(10,30)} & -0.5 & 0.34 & 0.31 & 0.33 \\
		&  & -0.25 & 0.43 & 0.40 & 0.42 \\
		&  & -0.1 & 0.48 & 0.46 & 0.48 \\
		&  & 0 & 0.52 & 0.50 & 0.52 \\
		&  & 0.1 & 0.56 & 0.54 & 0.56 \\
		&  & 0.25 & 0.61 & 0.60 & 0.62 \\
		&  & 0.5 & 0.72 & 0.69 & 0.71 \\
		&  &  &  &  &  \\
		\multirow{15}{*}{PPS} & \multirow{7}{*}{(5,10)} & -0.5 & 0.32 & 0.31 & 0.32 \\
		&  & -0.25 & 0.41 & 0.40 & 0.41 \\
		&  & -0.1 & 0.47 & 0.46 & 0.47 \\
		&  & 0 & 0.51 & 0.50 & 0.51 \\
		&  & 0.1 & 0.55 & 0.54 & 0.55 \\
		&  & 0.25 & 0.61 & 0.60 & 0.62 \\
		&  & 0.5 & 0.71 & 0.69 & 0.71 \\
		&  &  &  &  &  \\
		& \multirow{7}{*}{(10,30)} & -0.5 & 0.32 & 0.31 & 0.32 \\
		&  & -0.25 & 0.41 & 0.40 & 0.41 \\
		&  & -0.1 & 0.46 & 0.46 & 0.47 \\
		&  & 0 & 0.50 & 0.50 & 0.50 \\
		&  & 0.1 & 0.54 & 0.54 & 0.54 \\
		&  & 0.25 & 0.60 & 0.60 & 0.60 \\
		&  & 0.5 & 0.69 & 0.69 & 0.69 \\

		\hline

	\end{tabular}
\end{table}

%We have also done simulations to compare the proposed method with the nonparametric Bayesian bootstrap method discussed by \citet{dong2014nonparametric} and that based on the two-step inverse sampling method proposed by  \citet{soton38498} under the single-stage and two-stage sampling designs. Results show that the proposed method outperforms these two approaches in the sense that the coverage rate of the constructed confidence interval by the proposed method is closer to the nominal truth when the sampling distribution of the design-unbiased estimator is skewed.

\section{Conclusion}
In this paper, we propose  bootstrap methods for Poisson sampling, SRS  and  PPS sampling, and we show that the proposed bootstrap methods are second-order accurate. The first step of the proposed bootstrap methods corresponds to an inverse sampling procedure by incorporating the sampling information. Since the proposed bootstrap method is based on an asymptotically pivotal statistic, it is necessary to estimate the variance of the design-unbiased estimator. Simulation results show that the proposed bootstrap method provides more conservative confidence interval than the Wald-type method when the sample size is small, and the  90\% confidence interval constructed by the proposed bootstrap method has a better coverage rate. Although the proposed bootstrap method is discussed under the single-stage sampling designs, simulation shows that it works well under some two-stage sampling designs, and Edgeworth expansion for two-stage sampling designs are under investigation. It may be extended to other complex sampling designs when the asymptotic distribution of the design-unbiased estimator exists, but the second-order accuracy may not be guaranteed. Besides, the proposed bootstrap method can be easily parallelized in practice.

\section{Acknowledgment}
We would like to thank Dr. J.~N.~K. Rao for the suggestion to discuss the simple random sampling and the two anonymous reviewers for the detailed and constructive comments.

\section{Supplement}
\subsection{Proofs}
\renewcommand{\theequation}{A.\arabic{equation}}
\setcounter{equation}{0}
For the purpose of clarity, we explicitly express $y_{N,i}$, $Y_N$, $I_{N,i}$, $\pi_{N,i}$ and $p_{N,i}$ for $y_i$, $Y$, $I_i$, $\pi_i$ and $p_i$ to highlight that they are indexed by $N$, and the same notation is used for other quantities without further mentioning. Denote $E(\cdot\mid \mathcal{F}_N)$ and $\mathrm{var}(\cdot\mid \mathcal{F}_N)$ to be the expectation and variance with respect to the probability measure of a specific sampling design, say $\mathbb{P}_{Poi}$ under Poisson sampling, $E_*(\cdot)$ and $\mathrm{var}_*(\cdot)$ to be the conditional mean and variance with respect to the multinomial distribution in the first steps of the proposed bootstrap method conditional on  the realized sample $\{y_{N,1},\ldots,y_{N,n}\}$, and $E_{**}(\cdot)$ and $\mathrm{var}_{**}(\cdot)$ to be the expectation and variance   with respect to the sampling design in the second step conditional on the bootstrap  finite population $ \mathcal{F}_N^*$.

\begin{proof}[Proof of Lemma \ref{lemma: almost sure variance poi}]
	
	Denote $X_{N,i}^{(1)} = {n_0}{N^{-2}}y_{N,i}^2(1-\pi_{N,i})\pi_{N,i}^{-2}(I_{N,i}-\pi_{N,i})$, then $n_0N^{-2}\big(\hat{V}_{N,Poi}-V_{N,Poi}\big)=\sum_{i=1}^{N}X_{N,i}^{(1)}$.
	Let $D_N^{(1)}$ be the event $\big\{\big\lvert \sum_{i=1}^NX_{N,i}^{(1)}\big\rvert > \epsilon\big\}$ for $N\in\mathbb{N}_+$, where $\epsilon\in(0,\infty)$ and $ \mathbb{N}_+$ is the set of positive integers. 
	
	By the Borel-Cantelli Lemma \citep[Thereom 7.2.2]{athreya2006measure}, to show \eqref{eq: lemma1 2}, it is enough to prove
	\begin{equation}
	\sum_{N=1}^\infty\mathbb{P}_{Poi}(D_N^{(1)})<\infty\label{eq: lemma 1 1}
	\end{equation}
	for  $\epsilon>0$. By the Markov's inequality \citep[Proposition 6.2.4]{athreya2006measure}, we have 
	\begin{eqnarray}
	\mathbb{P}_{Poi}(D_N^{(1)})&\leq&  {\epsilon^{-4}}E\left\{\left(\sum_{i=1}^NX_{N,i}^{(1)}\right)^4\mid \mathcal{F}_N\right\}\notag \\
	&=& {\epsilon^{-4}}\bigg[
	\sum_{i=1}^NE\left\{\left(X_{N,i}^{(1)}\right)^4\mid \mathcal{F}_N\right\}\notag \\
	& & ~~~~~~~~+ \sum_{(i,j)\in\Gamma_N}E\left\{\left(X_{N.i}^{(1)}\right)^2\mid \mathcal{F}_N\right\}E\left\{\left(X_{N,j}^{(1)}\right)^2\mid \mathcal{F}_N\right\}
	\bigg],\notag 
	\end{eqnarray}
	where the last equality holds since $E(X_{N,i}^{(1)}\mid \mathcal{F}_N) = 0$ for $i=1,\ldots,N$, and $X_{N,i}^{(1)}$ is independent of $X_{N,j}^{(1)}$ for $(i,j)\in\Gamma_N$ with $\Gamma_N=\{(i,j): i,j=1,\ldots,N \text{~and~} i\neq j\}$.
	
	Consider 
	\begin{eqnarray}
	& & E\left\{\left(X_{N,i}^{(1)}\right)^4\mid \mathcal{F}_N\right\} \notag \\
	&=& n_0^4N^{-8}y_{N,i}^8(1-\pi_{N,i})^5\pi_{N,i}^{-4}\{(1-\pi_{N,i})^3\pi_{N,i}^{-3}+1\} \notag \\
	&\leq&  C_{1,1} n_0^{-3}N^{-1}y_{N,i}^8, \label{eq: lemma1 3}
	\end{eqnarray}
	where $C_{1,1}$ is a positive constant determined by (C\ref{cond: C1poi}).
	
	Next, consider 
	\begin{eqnarray}
	E\left\{\left(X_{N,i}^{(1)}\right)^2\mid \mathcal{F}_N\right\} &=& n_0^2N^{-4}y_{N,i}^4\pi_{N,i}^{-3}(1-\pi_{N,i})^{3} \notag \\
	&\leq&  C_{1,2}n_0^{-1}N^{-1}y_{N,i}^4,\label{eq: lemma1 4}
	\end{eqnarray}
	where $C_{1,2}$ is a positive constant.
	
	Based on some algebra and (C\ref{cond: C4poi}), we have 
	\begin{eqnarray}
	\sum_{(i,j)\in\Gamma_N}y_{N,i}^4y_{N,j}^4 &=& O(N^2). \label{eq: sum 4,4}
	\end{eqnarray}
	
	By \eqref{eq: lemma1 3}, \eqref{eq: lemma1 4} and \eqref{eq: sum 4,4}, we have 
	\begin{eqnarray}
	\notag & & \mathbb{P}_{Poi}(D_N^{(1)}) \\
	&\leq&  \epsilon^{-4}C_{1,1}n_0^{-3}N^{-1}\sum_{i=1}^Ny_{N,i}^8 + \epsilon^{-4}C_{1,2}^2n_0^{-2}N^{-2}\sum_{(i,j)\in\Gamma_N}y_{N,i}^4y_{N,j}^4\notag\\
	% &\leq&  C_{N,3}n_0^{-3} + C_{N,4}n_0^{-2}\notag \\ 
	&=& O(N^{-2\alpha})\notag
	\end{eqnarray}
	for any fixed $\epsilon>0$, where the last inequality holds by (C\ref{cond: C4poi}).
	Since $\alpha\in(2^{-1},1]$ by (C\ref{cond: C1poi}), we have proved \eqref{eq: lemma1 2} based on \eqref{eq: lemma 1 1}. 
	
	For $\mu_{N,Poi}^{(3)}=\sum_{i=1}^Ny_{N,i}^3(1-\pi_{N,i})\{(1-\pi_{N,i})^2\pi_{N,i}^{-2}-1\}$, we have
	\begin{eqnarray}
	\left|n_0^2N^{-3}\mu_{Poi}^{(3)}\right|&=&\left\lvert n_0^2N^{-3}\sum_{i=1}^Ny_{N,i}^3(1-\pi_{N,i})\{(1-\pi_{N,i})^2\pi_{N,i}^{-2}-1\}\right\rvert\notag\\
	&\leq& 2n_0^2N^{-3}\sum_{i=1}^N\lvert y_{N,i}\rvert^3\pi_{N,i}^{-2}\notag \\ 
	&\leq&2C_1^{-2}N^{-1}\sum_{=1}^N\lvert y_{N,i}\rvert^3 = O(1)\notag,
	\end{eqnarray}
	where the first inequality holds by $0<\pi_{N,i}<1$ and $0<1-\pi_{N,i}<1$,   the second inequality holds by (C\ref{cond: C1poi}), and the last equality holds by (C\ref{cond: C4poi}).
	
	Mentioned that
	\begin{eqnarray}
	\notag & & E\left(n_0^2N^{-3}\hat{\mu}_{N,Poi}^{(3)}\mid \mathcal{F}_N\right) \\
	&=&E\left[n_0^2N^{-3}\sum_{i=1}^ny_{N,i}^3(1-\pi_{N,i})\pi_{N,i}^{-1}\{(1-\pi_{N,i})^{2}\pi_{N,i}^{-2}-1\}\mid \mathcal{F}_N\right] \notag\\ &=& \notag n_0^2N^{-3}\sum_{i=1}^Ny_{N,i}^3(1-\pi_{N,i})\{(1-\pi_{N,i})^{2}\pi_{N,i}^{-2}-1\} \\
	& = & n_0^2N^{-3}\mu_{N,Poi}^{(3)} \notag
	\end{eqnarray}
	and
	\begin{eqnarray}
	\notag & & \mathrm{var}\left(\hat{\mu}_{N,Poi}^{(3)}\mid \mathcal{F}_N\right) \\
	&=&\mathrm{var}\left[n_0^2N^{-3}\sum_{i=1}^ny_{N,i}^3(1-\pi_{N,i})\pi_{N,i}^{-1}\{(1-\pi_{N,i})^{2}\pi_{N,i}^{-2}-1\}\mid \mathcal{F}_N\right]\notag \\ 
	&\leq&  4n_0^4N^{-6}\sum_{i=1}^Ny_{N,i}^6\pi_{N,i}^{-5} \notag \\
	& = & O(n_0^{-1}), \notag
	\end{eqnarray}
	we can obtain that $n_0^2N^{-3}(\hat{\mu}_{N,Poi}^{(3)}-\mu_{N,Poi}^{(3)})=O_p(n_0^{-1/2})$ by the Markov's inequality. The results concerning $\tau_{N,Poi}^{(3)}$ and $\hat{\tau}_{N,Poi}^{(3)}$ can be proved similarly and   is omitted here. 
	\iffalse
	Finally, for $\Theta_{N,Poi}^{(2,3)}$,
	\begin{eqnarray}
	\left\lvert n_0^3N^{-5}\Theta_{N,Poi}^{(2,3)}\right\lvert&=&\left\lvert n_0^3N^{-5}\sum_{(i,j)\in\Gamma_N}y_{N,i}^2y_{N,j}^3(1-\pi_{N,i})(1-\pi_{N,i})^2\pi_{N,i}^{-1}\pi_{N,j}^{-2}\right\rvert\notag\\
	&\leq& n_0^3N^{-5}\sum_{(i,j)\in\Gamma_N}y_{N,i}^2\lvert y_{N,j}\rvert^3\pi_{N,i}^{-1}\pi_{N,j}^{-2}\notag \\ 
	&\leq& C_1^{-3}N^{-2}\sum_{(i,j)\in\Gamma_N}y_{N,i}^2\lvert y_{N,j}\rvert^3 = O(1)\notag.
	\end{eqnarray}
	
	By the fact that
	\begin{eqnarray}
	E\left(n_0^3N^{-5}\hat{\Theta}_{N,Poi}^{(2,3)}\right)=n_0^3N^{-5}\Theta_{N,Poi}^{(2,3)} \notag
	\end{eqnarray}
	and
	\begin{eqnarray}
	&&\E\left\{ n_0^3N^{-5}\left(\hat{\Theta}_{N,Poi}^{(2,3)}-\Theta_{N,Poi}^{(2,3)}\right)\right\}^2 \notag \\ 
	&=&  n_0^6N^{-10}\sum_{(i,j)\in\Gamma_N}y_{N,i}^4y_{N,j}^6(1-\pi_{N,i})^3(1-\pi_{N,j})^5\pi_{N,i}^{-3}\pi_{N,j}^{-5} \notag \\
	&\leq& C_1^{-8}n_0^{-2}N^{-2}\sum_{(i,j)\in\Gamma_N}y_{N,i}^4y_{N,j}^6=O(n_0^{-2}), \notag
	\end{eqnarray}
	where the last equality holds by
	\begin{eqnarray}
	&& N^{-2}\sum_{(i,j)\in\Gamma_N}y_{N,i}^4y_{N,j}^6 \notag \\
	&\leq&\left(N^{-1}\sum_{i=1}^{N}y_{N,i}^4\right)\left(N^{-1}\sum_{i=1}^{N}y_{N,i}^6\right) = O(1). \notag
	\end{eqnarray}
	By the Markov's inequality, we have $n_0^3N^{-5}(\hat{\Theta}_{N,Poi}^{(2,3)}-\Theta_{N,Poi}^{(2,3)})=O_p(n^{-1})$.
	\fi
	Thus, we finalize the proof of Lemma \ref{lemma: almost sure variance poi}.
\end{proof}

The following lemmas are useful in establishing Theorem \ref{theorem: edgeworth expansion Poisson} and \ref{theorem: e 2}.

\begin{lemma}\label{lem_01}
	Denote $X_{N,i}=V_{N,Poi}^{-1/2}y_{N,i}\pi_{N,i}^{-1}(I_{N,i}-\pi_{N,i})$ for $i=1,\ldots,N$. Let $\Delta_{N,1}=\sum_{i=1}^{N}X_{N,i}$ and $\phi_{\Delta_{N,1}}(t)=\E\left\{\exp(\iota t\Delta_{N,1})\mid \mathcal{F}_N\right\}$ be the characteristic function (c.f.) of $\Delta_{N,1}$, where $\iota$ is the imaginary unit. Then under conditions (C\ref{cond: C1poi})--(C\ref{cond: C4poi}),
	\begin{eqnarray}\label{lem_01_eq_01}
	\left|\phi_{\Delta_{N,1}}(t)\right| & \leq & \exp(-t^2/3), \\
	\left|\phi_{\Delta_{N,1}}(t)-\exp(-t^2/2)\right| & \leq & 16|t|^3V_{N,Poi}^{-3/2}\nu_{N,Poi}^{(3)}\exp(-t^2/3)
	\end{eqnarray}
	for all $|t|\leq V_{N,Poi}^{3/2}/\left(4\nu_{N,Poi}^{(3)}\right)$, where $\nu_{N,Poi}^{(3)}=\sum_{i=1}^{N}|y_{N,i}|^3(1-\pi_{N,i})\big\{(1-\pi_{N,i})^2\pi_{N,i}^{-2}+1\big\}$. Furthermore, %for any arbitrary $\epsilon\in(0,1)$,
	\begin{eqnarray}\label{lem_01_eq_03}
	& &\left|\phi_{\Delta_{N,1}}(t)-\exp(-t^2/2)-6^{-1}(\iota t)^3V_{N,Poi}^{-3/2}\mu_{N,Poi}^{(3)}\exp(-t^2/2)\right|\notag \\
	&\leq & C_{2,1}\exp(-19t^2/48)\left(t^4n_0^{-1}+t^6n_0^{-1}\right)
	\end{eqnarray}
	for all $|t|\leq\min\left(\big\{\max_{1\leq i\leq N}\E(X_{N,i}^2\mid \mathcal{F}_N)\big\}^{-1/2},V_{N,Poi}^{3/2}/\left(4\nu_{N,Poi}^{(3)}\right)\right)$, where $C_{2,1}$ is a positive constant and recall that $\mu_{N,Poi}^{(3)}=\sum_{i=1}^Ny_{N,i}^3(1-\pi_{N,i})\{(1-\pi_{N,i})^2\pi_{N,i}^{-2}-1\}$.
\end{lemma}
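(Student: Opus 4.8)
I would treat $\Delta_{N,1}=\sum_{i=1}^{N}X_{N,i}$ as a sum of random variables that, conditionally on $\mathcal{F}_N$, are independent with mean zero, since under Poisson sampling $I_{N,1},\dots,I_{N,N}$ are independent, and reduce the three displays to Esseen-type estimates for characteristic functions of such sums. First I would record the conditional moments: by construction $\sum_{i=1}^{N}\E(X_{N,i}^2\mid\mathcal{F}_N)=V_{N,Poi}^{-1}\sum_{i=1}^{N}y_{N,i}^2\pi_{N,i}^{-1}(1-\pi_{N,i})=1$; from $\E(|I_{N,i}-\pi_{N,i}|^3\mid\mathcal{F}_N)=\pi_{N,i}(1-\pi_{N,i})\{(1-\pi_{N,i})^2+\pi_{N,i}^2\}$ one gets $L_N:=\sum_{i=1}^{N}\E(|X_{N,i}|^3\mid\mathcal{F}_N)=V_{N,Poi}^{-3/2}\nu_{N,Poi}^{(3)}$, and since $(1-\pi_{N,i})^2-\pi_{N,i}^2=1-2\pi_{N,i}$, likewise $\sum_{i=1}^{N}\E(X_{N,i}^3\mid\mathcal{F}_N)=V_{N,Poi}^{-3/2}\mu_{N,Poi}^{(3)}$. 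Conditions (C\ref{cond: C1poi})--(C\ref{cond: C4poi}) with Lemma \ref{lemma: almost sure variance poi} give $L_N\asymp n_0^{-1/2}$; moreover $|X_{N,i}|\le V_{N,Poi}^{-1/2}|y_{N,i}|\pi_{N,i}^{-1}$, so $\E(X_{N,i}^4\mid\mathcal{F}_N)\le V_{N,Poi}^{-1/2}|y_{N,i}|\pi_{N,i}^{-1}\,\E(|X_{N,i}|^3\mid\mathcal{F}_N)$, whence (C\ref{cond: C1poi})--(C\ref{cond: C4poi}) yield $\sum_{i=1}^{N}\E(X_{N,i}^4\mid\mathcal{F}_N)=O(n_0^{-1})$, with analogous $O(n_0^{-1})$ bounds (via Hölder's inequality and (C\ref{cond: C4poi})) for the weighted higher-moment sums entering the third display. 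Writing $\phi_i(t)=\E(e^{\iota tX_{N,i}}\mid\mathcal{F}_N)$, $\sigma_i^2=\E(X_{N,i}^2\mid\mathcal{F}_N)$, $\beta_i=\E(|X_{N,i}|^3\mid\mathcal{F}_N)$, the Taylor bound $|e^{\iota x}-\sum_{k=0}^{m}(\iota x)^k/k!|\le\min\{|x|^{m+1}/(m+1)!,\,2|x|^m/m!\}$ gives $\phi_i(t)=1-\tfrac12 t^2\sigma_i^2+R_i(t)$ with $|R_i(t)|\le\min\{\tfrac12 t^2\sigma_i^2,\tfrac16|t|^3\beta_i\}$, and $\phi_{\Delta_{N,1}}(t)=\prod_{i=1}^{N}\phi_i(t)$.

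For \eqref{lem_01_eq_01} and the companion difference bound I would invoke the classical Esseen-type estimate for characteristic functions of sums of independent mean-zero random variables (see, e.g., \citealp[Chapter XVI]{feller2008introduction}), applied conditionally on $\mathcal{F}_N$ with $\sum_i\sigma_i^2=1$ and with $L_N=V_{N,Poi}^{-3/2}\nu_{N,Poi}^{(3)}$ playing the role of the Lyapunov ratio $\sum_i\E|X_{N,i}|^3$: that result gives $|\phi_{\Delta_{N,1}}(t)|\le\exp(-t^2/3)$ and $|\phi_{\Delta_{N,1}}(t)-\exp(-t^2/2)|\le16L_N|t|^3\exp(-t^2/3)$ for $|t|\le1/(4L_N)=V_{N,Poi}^{3/2}/(4\nu_{N,Poi}^{(3)})$, and substituting $L_N=V_{N,Poi}^{-3/2}\nu_{N,Poi}^{(3)}$ into the second bound recovers the stated form. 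Concretely the proof proceeds by factoring $\phi_{\Delta_{N,1}}(t)=\prod_i\phi_i(t)$, using the second-order Taylor expansion above for each factor, and combining the factors by a telescoping comparison with $\prod_i\exp(-t^2\sigma_i^2/2)=\exp(-t^2/2)$, together with the bound $|\phi_k(t)-\exp(-t^2\sigma_k^2/2)|\le\tfrac16|t|^3\beta_k+\tfrac18t^4\sigma_k^4$, the estimates $\sum_i\sigma_i^4\le\max_i\sigma_i^2\le1$, and the range restriction $|t|\le1/(4L_N)$ (which makes $|t|^3L_N\le t^2/4$); the relaxed constant $1/3$ in the Gaussian factor, rather than $1/2$, leaves precisely the slack needed to absorb the residual polynomial-in-$t$ terms, and $|\phi_{\Delta_{N,1}}(t)|\le\exp(-t^2/3)$ is obtained first (directly, e.g.\ via the symmetrization identity $|\phi_i(t)|^2=1-\E\{1-\cos(t(X_{N,i}-X_{N,i}'))\mid\mathcal{F}_N\}$) and then fed back into the telescoping to sharpen the tail of the product.

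For \eqref{lem_01_eq_03} I would restrict to the stated smaller range $|t|\le\min\{(\max_i\sigma_i^2)^{-1/2},\,V_{N,Poi}^{3/2}/(4\nu_{N,Poi}^{(3)})\}$, so that $t^2\sigma_i^2\le1$ and hence $|\phi_i(t)-1|\le\tfrac12t^2\sigma_i^2\le\tfrac12$ for every $i$, which legitimizes the principal logarithm and its power series. Expanding $\log\phi_i(t)=\sum_{j\ge1}(-1)^{j-1}j^{-1}(\phi_i(t)-1)^j$, inserting $\phi_i(t)-1=-\tfrac12t^2\sigma_i^2+\tfrac16(\iota t)^3\kappa_{3,i}+\varepsilon_i$ with $\kappa_{3,i}=\E(X_{N,i}^3\mid\mathcal{F}_N)$ and $|\varepsilon_i|$ controlled---after splitting according to whether $|tX_{N,i}|$ is large or small---by $t^4\E(X_{N,i}^4\mid\mathcal{F}_N)$ plus a truncation tail handled with (C\ref{cond: C4poi}), and collecting terms, I would sum over $i$ using $\sum_i\sigma_i^2=1$, $\sum_i\kappa_{3,i}=V_{N,Poi}^{-3/2}\mu_{N,Poi}^{(3)}$, $\sum_i\E(X_{N,i}^4\mid\mathcal{F}_N)=O(n_0^{-1})$, $\sum_i\sigma_i^4=O(n_0^{-1})$, $\sum_i\kappa_{3,i}^2=O(n_0^{-1})$, and elementary (Hölder) bounds on the cross terms, to obtain $\sum_i\log\phi_i(t)=-\tfrac12t^2+\tfrac16(\iota t)^3V_{N,Poi}^{-3/2}\mu_{N,Poi}^{(3)}+\theta(t)$ with $|\theta(t)|=O\{(t^4+t^6)n_0^{-1}\}$ on this range. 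Exponentiating and expanding $\exp$ about $-t^2/2$ reproduces the stated main term $\exp(-t^2/2)\{1+6^{-1}(\iota t)^3V_{N,Poi}^{-3/2}\mu_{N,Poi}^{(3)}\}$, and the remainder is $O\{(t^4+t^6)n_0^{-1}\}\exp(-19t^2/48)$, giving \eqref{lem_01_eq_03}.

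The main obstacle I expect is the uniform control of the individual factors near the upper end of the admissible $t$-range, where neither $|t|^3\beta_i$ (for the first two bounds) nor $t^2\sigma_i^2$ (for the third) can be made small uniformly in $i$; this is exactly why the first two bounds carry the relaxed Gaussian constant $1/3$ instead of $1/2$, and why the third is confined to $|t|\le(\max_i\sigma_i^2)^{-1/2}$, in each case leaving enough exponential slack to absorb polynomial-in-$t$ factors. The second delicate point is obtaining the order $n_0^{-1}$---not merely $n_0^{-1/2}$---in the remainder of \eqref{lem_01_eq_03}: this rests on the boundedness of $X_{N,i}$ under Poisson sampling, on the moment condition (C\ref{cond: C4poi}) to make $\sum_i\E(X_{N,i}^4\mid\mathcal{F}_N)$ and the related higher-moment sums (including the truncation tail in $\varepsilon_i$) of order $n_0^{-1}$, and on $L_N=V_{N,Poi}^{-3/2}\nu_{N,Poi}^{(3)}\asymp n_0^{-1/2}$ from (C\ref{cond: C1poi})--(C\ref{cond: C4poi}), which translates the Lyapunov-type estimates into the stated powers of $n_0$.
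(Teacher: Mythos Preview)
Your proposal is correct and follows essentially the same route as the paper: compute the conditional second, third and absolute third moments of the $X_{N,i}$, invoke the standard Esseen--Petrov bound for characteristic functions of independent mean-zero summands (the paper cites Lemma~5.1 of Petrov rather than Feller, but the statement is identical) to obtain the first two displays, and then for \eqref{lem_01_eq_03} pass to $\sum_i\log\phi_i(t)$, expand to third order, and control the remainder via $\sum_i\E(X_{N,i}^4\mid\mathcal{F}_N)=O(n_0^{-1})$ together with $|t|\le 1/(4L_N)$ to recover the factor $\exp(-19t^2/48)$.

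The only cosmetic differences are that the paper (i) cites Lemma~11.4.3 of \citet{athreya2006measure} for the cumulant-type bounds on $\log\phi_i$ rather than expanding $\log(1+u)$ by hand, (ii) uses the explicit inequality $|\exp(z)-1-w|\le(|z-w|+|w|^2/2)\exp\{\max(|z|,|w|)\}$ to pass from the log level back to the characteristic function, and (iii) makes the arithmetic $5|t|^3L_N/12\le 5t^2/48$ (hence $-t^2/2+5t^2/48=-19t^2/48$) explicit. Your pointwise bound $\E(X_{N,i}^4\mid\mathcal{F}_N)\le V_{N,Poi}^{-1/2}|y_{N,i}|\pi_{N,i}^{-1}\,\E(|X_{N,i}|^3\mid\mathcal{F}_N)$, summed over $i$, gives the same $O(n_0^{-1})$ as the paper's direct fourth-moment computation.
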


\begin{proof}
	As $I_{N,i}\sim\mathrm{Ber}(\pi_{N,i})$, $\E(X_{N,i}\mid \mathcal{F}_N)=0$ and $\E(X_{N,i}^2\mid \mathcal{F}_N)=V_{N,Poi}^{-1}y_{N,i}^2(1-\pi_{N,i})\pi_{N,i}^{-1}$ for $i=1,\ldots,N$. In addition,
	\begin{eqnarray}
	\E(X_{N,i}^3\mid \mathcal{F}_N) = V_{N,Poi}^{-3/2}y_{N,i}^3(1-\pi_{N,i})\big\{(1-\pi_{N,i})^2\pi_{N,i}^{-2}-1\big\} \notag
	\end{eqnarray}
	and
	\begin{eqnarray}
	\E\{|X_{N,i}|^3\mid \mathcal{F}_N\} = V_{N,Poi}^{-3/2}|y_{N,i}|^3(1-\pi_{N,i})\big\{(1-\pi_{N,i})^2\pi_{N,i}^{-2}+1\big\}. \notag
	\end{eqnarray}
	Thus, $\sum_{i=1}^{N}\E(X_{N,i}^2\mid \mathcal{F}_N)=1$, $\sum_{i=1}^{N}\E(X_{N,i}^3\mid \mathcal{F}_N)=V_{N,Poi}^{-3/2}\mu_{N,Poi}^{(3)}=O(n_0^{-1/2})$ by (C\ref{con: variance}) and Lemma \ref{lemma: almost sure variance poi}. In addition, $\nu_{N,Poi}^{(3)}=\sum_{i=1}^{N}|y_{N,i}|^3(1-\pi_{N,i})\big\{(1-\pi_{N,i})^2\pi_{N,i}^{-2}+1\big\}=O(n_0^{-2}N^{-3})$, which implies $\sum_{i=1}^{N}\E\{|X_{N,i}|^3\mid \mathcal{F}_N\}=V_{N,Poi}^{-3/2}\nu_{N,Poi}^{(3)}=O(n_0^{-1/2}).$
	By the fact that $\max_{1\leq i\leq N}\E\{|X_{N,i}|^3\mid \mathcal{F}_N\}\leq\sum_{i=1}^{N}\E\{|X_{N,i}|^3\mid \mathcal{F}_N\}=O(n_0^{-1/2})$, $\E\{|X_{N,i}|^3\mid \mathcal{F}_N\}<\infty$ for $i=1,\ldots,N$. By Lemma 5.1 of \citet{Petrov1995},
	\begin{eqnarray}
	\left|\phi_{\Delta_{N,1}}(t)\right| & \leq & \exp(-t^2/3), \notag \\
	\left|\phi_{\Delta_{N,1}}(t)-\exp(-t^2/2)\right| & \leq & 16|t|^3V_{N,Poi}^{-3/2}\nu_{N,Poi}^{(3)}\exp(-t^2/3) \notag
	\end{eqnarray}
	for all $|t|\leq V_{N,Poi}^{3/2}/\left(4\nu_{N,Poi}^{(3)}\right)$.
	
	It remains to show \eqref{lem_01_eq_03}. Denote $\phi_{X_{N,i}}(t)=\E\left\{\exp(\iota tX_{N,i})\mid \mathcal{F}_N\right\}$ as the characteristic function of $X_{N,i}$. Note that for any complex numbers $z$, $w$, $|\exp(z)-1-w|\leq(|z-w|+|w|^2/2)\exp\{\max(|z|,|w|)\}$, it follows that
	\begin{eqnarray}
	\notag && \left|\phi_{\Delta_{N,1}}(t)-\exp(-t^2/2)-6^{-1}(\iota t)^3V_{N,Poi}^{-3/2}\mu_{N,Poi}^{(3)}\exp(-t^2/2)\right| \\
	\notag &= & \left|\prod_{i=1}^{N}\phi_{X_{N,i}}(t)-\exp(-t^2/2)-6^{-1}(\iota t)^3V_{N,Poi}^{-3/2}\mu_{N,Poi}^{(3)}\exp(-t^2/2)\right| \\
	\notag &= & \exp(-t^2/2)\left|\exp\left[\sum_{i=1}^{N}\log\{\phi_{X_{N,i}}(t)\}+t^2/2\right]-1-6^{-1}(\iota t)^3V_{N,Poi}^{-3/2}\mu_{N,Poi}^{(3)}\right| \\
	\notag &\leq & \exp(-t^2/2)\bigg[\bigg|\sum_{i=1}^{N}\log\{\phi_{X_{N,i}}(t)\}+t^2/2-6^{-1}(\iota t)^3V_{N,Poi}^{-3/2}\mu_{N,Poi}^{(3)}\bigg| \\
	\notag & & ~~~~~~~~~~~~~~~~~~~~+2^{-1}\left|6^{-1}(\iota t)^3V_{N,Poi}^{-3/2}\mu_{N,Poi}^{(3)}\right|^2\bigg] \\
	\notag & & \times \exp\left\{\max\left(\left|\sum_{i=1}^{N}\log\{\phi_{X_{N,i}}(t)\}+t^2/2\right|, \left|6^{-1}(\iota t)^3V_{N,Poi}^{-3/2}\mu_{N,Poi}^{(3)}\right|\right)\right\}.
	\end{eqnarray}	
	By Lemma 11.4.3 of \citet{athreya2006measure}, 
	\begin{eqnarray}\label{lem_01_eq_04}
	&& \left|\sum_{i=1}^{N}\log\{\phi_{X_{N,i}}(t)\}+t^2/2-6^{-1}(\iota t)^3V_{N,Poi}^{-3/2}\mu_{N,Poi}^{(3)}\right| \\
	\notag &\leq & \sum_{i=1}^{N}\big|\log\{\phi_{X_{N,i}}(t)\}-2^{-1}(\iota t)^2\E(X_{N,i}^2\mid \mathcal{F}_N) -6^{-1}(\iota t)^3\E(X_{N,i}^3\mid \mathcal{F}_N)\big| \\
	\notag & \leq & \sum_{i=1}^{N}\left(\E\left[\min\left\{|tX_{N,i}|^3/3, (tX_{N,i})^4/24\right\}\mid \mathcal{F}_N\right]+t^4\{\E(X_{N,i}^2\mid \mathcal{F}_N)\}^2/4\right) \\
	& \leq & C_{2,2}t^4n_0^{-1} \notag
	\end{eqnarray}
	for all $|t|\leq\big\{\max_{1\leq i\leq N}\E(X_{N,i}^2\mid \mathcal{F}_N)\big\}^{-1/2}$, where $C_{2,2}$ is a positive constant and the last inequality is by the fact that 
	\begin{eqnarray}
	\notag & & \sum_{i=1}^{N}\E(X_{N,i}^4\mid \mathcal{F}_N) \\
	\notag & = & V_{N,Poi}^{-2}\sum_{i=1}^{N}y_{N,i}^4(1-\pi_{N,i})\{(1-\pi_{N,i})^3\pi_{N,i}^{-3}+1\} \\
	\notag & = & O(n_0^{-1}).
	\end{eqnarray}
	Similarly, by Lemma 11.4.3 of \citet{athreya2006measure}, 
	\begin{eqnarray}
	\notag && \left|\sum_{i=1}^{N}\log\{\phi_{X_{N,i}}(t)\}+t^2/2\right| \\
	\notag &\leq & \sum_{i=1}^{N}\big|\log\{\phi_{X_{N,i}}(t)\}-2^{-1}(\iota t)^2\E(X_{N,i}^2\mid \mathcal{F}_N)\big| \\
	\notag &\leq & 5|t|^3\sum_{i=1}^{N}\E\{|X_{N,i}|^3\mid \mathcal{F}_N\}/12=5|t|^3V_{N,Poi}^{-3/2}\nu_{N,Poi}^{(3)}/12
	\end{eqnarray}
	for all $|t|\leq\big\{\max_{1\leq i\leq N}\E(X_{N,i}^2\mid \mathcal{F}_N)\big\}^{-1/2}$. 
	
	Thus, if $|t|\leq\min\left(\big\{\max_{1\leq i\leq N}\E(X_{N,i}^2\mid \mathcal{F}_N)\big\}^{-1/2},V_{N,Poi}^{3/2}/\left(4\nu_{N,Poi}^{(3)}\right)\right)$,
	\begin{eqnarray}\label{lem_01_eq_05}
	& & \max\left(\left|\sum_{i=1}^{N}\log\{\phi_{X_{N,i}}(t)\}+t^2/2\right|, \left|6^{-1}(\iota t)^3V_{N,Poi}^{-3/2}\mu_{N,Poi}^{(3)}\right|\right) \\
	& \leq & 5t^2/48. \notag
	\end{eqnarray}
	
	Mentioned that
	\begin{align}\label{lem_01_eq_06}
	2^{-1}\left|6^{-1}(\iota t)^3V_{N,Poi}^{-3/2}\mu_{N,Poi}^{(3)}\right|^2 \leq C_{2,3}t^6n_0^{-1}
	\end{align}
	for some positive constant $C_{2,3}$. 
	
	Finally, by \eqref{lem_01_eq_04}, \eqref{lem_01_eq_05} and \eqref{lem_01_eq_06}, it follows that
	\begin{eqnarray}
	\notag && \left|\phi_{\Delta_{N,1}}(t)-\exp(-t^2/2)-6^{-1}(\iota t)^3V_{N,Poi}^{-3/2}\mu_{N,Poi}^{(3)}\exp(-t^2/2)\right| \\
	\notag &\leq & \exp(-t^2/2)\left(C_{2,2}t^4n_0^{-1}+C_{2,3}t^6n_0^{-1}\right)\exp(5t^2/48) \\
	\notag &\leq & C_{2,1}\exp(-19t^2/48)\left(t^4n_0^{-1}+t^6n_0^{-1}\right)
	\end{eqnarray} 
	for all $|t|\leq\min\left(\big\{\max_{1\leq i\leq N}\E(X_{N,i}^2\mid \mathcal{F}_N)\big\}^{-1/2},V_{N,Poi}^{3/2}/\left(4\nu_{N,Poi}^{(3)}\right)\right)$. This concludes the proof of this Lemma.
\end{proof}

\begin{lemma}\label{lem_02}
	Denote 
	\begin{eqnarray}
	\Delta_{N,2}=-2^{-1}V_{N,Poi}^{-3/2}\sum_{(i,j)\in\Gamma_N}y_{N,i}y_{N,j}^2(1-\pi_{N,j})\pi_{N,i}^{-1}\pi_{N,j}^{-2}(I_{N,i}-\pi_{N,i})(I_{N,j}-\pi_{N,j}), \notag
	\end{eqnarray}
	then under conditions (C\ref{cond: C1poi})--(C\ref{cond: C4poi}),
	\begin{eqnarray}
	\notag && \E\left\{\Delta_{N,2}\exp(\iota t\Delta_{N,1})\mid \mathcal{F}_N\right\} \\
	\notag &= & 2^{-1}t^2\exp(-t^2/2)V_{N,Poi}^{-5/2}\Theta_{N,Poi}^{(2,3)}+\varpi_{N,1}(t),
	\end{eqnarray}
	for all $|t|\leq \big\{\max_{1\leq i\leq N}\E(X_{N,i}^2\mid \mathcal{F}_N)\big\}^{-1/2}$, where  $X_{N,i}=V_{N,Poi}^{-1/2}y_{N,i}\pi_{N,i}^{-1}(I_{N,i}-\pi_{N,i})$, $\Delta_{N,1}=\sum_{i=1}^{N}X_{N,i}$,  $\Gamma_N=\{(i,j): i,j=1,\ldots,N \text{~and~} i\neq j\}$,
	\begin{eqnarray}
	\notag \Theta_{N,Poi}^{(2,3)}= \sum_{(i,j)\in\Gamma_N}y_{N,i}^2y_{N,j}^3(1-\pi_{N,i})(1-\pi_{N,j})^2\pi_{N,i}^{-1}\pi_{N,j}^{-2},
	\end{eqnarray}
	and
	$\varpi_{N,1}(t)$ satisfies
	\begin{eqnarray}
	\notag && |\varpi_{N,1}(t)| \\
	\notag &\leq & C_{3,1}\exp\left\{-t^2/2+2|t|^3V_{N,Poi}^{-3/2}\nu_{N,Poi}^{(3)}/3+t^2\max_{1\leq i\leq N}\E(X_{N,i}^2\mid \mathcal{F}_N)\right\} \\
	\notag & & ~~~~~~~~~~~~~~~~~~~~~~~~~~~~~~~~~~~\times\left(|t|^3n_0^{-1}+t^4n_0^{-3/2}+|t|^5n_0^{-1}\right),
	\end{eqnarray}
	with $C_{3,1}$ being a positive constant and $\nu_{N,Poi}^{(3)}=\sum_{i=1}^{N}|y_{N,i}|^3(1-\pi_{N,i})\big\{(1-\pi_{N,i})^2\pi_{N,i}^{-2}+1\big\}$.
\end{lemma}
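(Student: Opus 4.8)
The plan is to exploit the mutual independence of $\{I_{N,i}\}_{i=1}^{N}$ under Poisson sampling. Write $\xi_{N,i}=I_{N,i}-\pi_{N,i}$ and $c_{ij}=-2^{-1}V_{N,Poi}^{-3/2}y_{N,i}y_{N,j}^{2}(1-\pi_{N,j})\pi_{N,i}^{-1}\pi_{N,j}^{-2}$, so that $\Delta_{N,2}=\sum_{(i,j)\in\Gamma_{N}}c_{ij}\xi_{N,i}\xi_{N,j}$. Since $\exp(\iota t\Delta_{N,1})=\prod_{k=1}^{N}\exp(\iota tX_{N,k})$ and the $X_{N,k}$ are independent, for each $(i,j)\in\Gamma_{N}$ the conditional expectation factors as $\E\{\xi_{N,i}\xi_{N,j}\exp(\iota t\Delta_{N,1})\mid\mathcal{F}_{N}\}=g_{i}(t)g_{j}(t)h_{ij}(t)$, where $g_{i}(t)=\E\{\xi_{N,i}\exp(\iota tX_{N,i})\mid\mathcal{F}_{N}\}$ and $h_{ij}(t)=\prod_{k\neq i,j}\phi_{X_{N,k}}(t)=\phi_{\Delta_{N,1}}(t)/\{\phi_{X_{N,i}}(t)\phi_{X_{N,j}}(t)\}$, with $\phi_{X_{N,k}}$ the characteristic function of $X_{N,k}$ (the denominator is nonzero on the relevant range of $t$). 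Hence $\E\{\Delta_{N,2}\exp(\iota t\Delta_{N,1})\mid\mathcal{F}_{N}\}=\sum_{(i,j)\in\Gamma_{N}}c_{ij}g_{i}(t)g_{j}(t)h_{ij}(t)$, and it remains to expand $g_{i}$ and $h_{ij}$.

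For $g_{i}$, I would use $\E(\xi_{N,i}\mid\mathcal{F}_{N})=0$, the identity $\E\{\xi_{N,i}X_{N,i}\mid\mathcal{F}_{N}\}=V_{N,Poi}^{-1/2}y_{N,i}(1-\pi_{N,i})$, and $|e^{\iota tx}-1-\iota tx|\le t^{2}x^{2}/2$ together with $|\xi_{N,i}|\le1$ to write $g_{i}(t)=\iota tV_{N,Poi}^{-1/2}y_{N,i}(1-\pi_{N,i})+r_{i}(t)$ with $|r_{i}(t)|\le 2^{-1}t^{2}\E(X_{N,i}^{2}\mid\mathcal{F}_{N})$. For $h_{ij}$, writing $h_{ij}(t)=\exp\{\sum_{k\neq i,j}\log\phi_{X_{N,k}}(t)\}$ and using, from the proof of Lemma~\ref{lem_01}, that $\sum_{k=1}^{N}\log\phi_{X_{N,k}}(t)=-t^{2}/2+6^{-1}(\iota t)^{3}V_{N,Poi}^{-3/2}\mu_{N,Poi}^{(3)}+O(t^{4}n_{0}^{-1})$ on $|t|\le\{\max_{i}\E(X_{N,i}^{2}\mid\mathcal{F}_{N})\}^{-1/2}$, together with the elementary bound $|\log\phi_{X_{N,k}}(t)|\le C\{t^{2}\E(X_{N,k}^{2}\mid\mathcal{F}_{N})+|t|^{3}\E(|X_{N,k}|^{3}\mid\mathcal{F}_{N})\}$ on the same range, I would obtain $h_{ij}(t)=\exp(-t^{2}/2)\{1+\tilde r_{ij}(t)\}$, where $\tilde r_{ij}(t)$ gathers the cubic cumulant term $6^{-1}(\iota t)^{3}V_{N,Poi}^{-3/2}\mu_{N,Poi}^{(3)}$, the $O(t^{4}n_{0}^{-1})$ remainder, and $-\log\phi_{X_{N,i}}(t)-\log\phi_{X_{N,j}}(t)$. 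The modulus estimates $|\phi_{\Delta_{N,1}}(t)|\le\exp(-t^{2}/3)$ and $|\sum_{k}\log\phi_{X_{N,k}}(t)+t^{2}/2|\le 5|t|^{3}V_{N,Poi}^{-3/2}\nu_{N,Poi}^{(3)}/12$ from Lemma~\ref{lem_01}, combined with the lower bound on $|\phi_{X_{N,i}}(t)|$ implied by the same range of $t$, yield $|h_{ij}(t)|\le\exp\{-t^{2}/2+2|t|^{3}V_{N,Poi}^{-3/2}\nu_{N,Poi}^{(3)}/3+t^{2}\max_{i}\E(X_{N,i}^{2}\mid\mathcal{F}_{N})\}$, which is exactly the exponential envelope appearing in $\varpi_{N,1}(t)$.

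Finally I would multiply everything out and sum over $\Gamma_{N}$. The product of the two leading pieces of $g_{i},g_{j}$ with $\exp(-t^{2}/2)$ contributes $\sum_{(i,j)\in\Gamma_{N}}c_{ij}\cdot\{\iota tV_{N,Poi}^{-1/2}y_{N,i}(1-\pi_{N,i})\}\{\iota tV_{N,Poi}^{-1/2}y_{N,j}(1-\pi_{N,j})\}e^{-t^{2}/2}=2^{-1}t^{2}e^{-t^{2}/2}V_{N,Poi}^{-5/2}\Theta_{N,Poi}^{(2,3)}$ after an elementary cancellation, which is the asserted main term. Every remaining contribution --- $(\text{leading})\times r$, $r\times(\text{leading})$, $r\times r$, and the $\tilde r_{ij}$ correction applied to the leading product --- is put into $\varpi_{N,1}(t)$. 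To bound it I would insert $|c_{ij}|\le CV_{N,Poi}^{-3/2}|y_{N,i}||y_{N,j}|^{2}\pi_{N,i}^{-1}\pi_{N,j}^{-2}$, the remainder bounds above, and the envelope bound for $|h_{ij}(t)|$, and then use $V_{N,Poi}\asymp n_{0}^{-1}N^{2}$ (condition (C\ref{con: variance})), $\pi_{N,i}\asymp n_{0}N^{-1}$ (condition (C\ref{cond: C1poi})), $\E(X_{N,i}^{2}\mid\mathcal{F}_{N})=V_{N,Poi}^{-1}y_{N,i}^{2}(1-\pi_{N,i})\pi_{N,i}^{-1}$, $V_{N,Poi}^{-3/2}\mu_{N,Poi}^{(3)}=O(n_{0}^{-1/2})$ (Lemma~\ref{lemma: almost sure variance poi}), and the moment bound $\sum_{(i,j)\in\Gamma_{N}}|y_{N,i}|^{a}|y_{N,j}|^{b}=O(N^{2})$ (H\"older and condition (C\ref{cond: C4poi})). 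Matching the powers of $n_{0}$ then gives $|t|^{3}n_{0}^{-1}$ from the $(\text{leading})\times r$ pieces, $t^{4}n_{0}^{-3/2}$ from the $r\times r$ piece, and $|t|^{5}n_{0}^{-1}$ from the cubic-cumulant correction, all carrying the exponential envelope, with the strictly lower-order pieces absorbed.

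The step I expect to be the main obstacle is the uniform control of $h_{ij}(t)=\prod_{k\neq i,j}\phi_{X_{N,k}}(t)$: one must show that deleting the two factors $\phi_{X_{N,i}},\phi_{X_{N,j}}$ perturbs the Gaussian approximation $\exp(-t^{2}/2)$ only by a term of the stated size, uniformly over all pairs $(i,j)\in\Gamma_{N}$ and over $|t|\le\{\max_{i}\E(X_{N,i}^{2}\mid\mathcal{F}_{N})\}^{-1/2}$, which forces one to rerun the cumulant-type expansion of Lemma~\ref{lem_01} with two factors removed rather than merely quote it; the secondary difficulty is the purely bookkeeping task of matching, term by term, the powers of $n_{0}$ produced by summing $O(N^{2})$ cross-terms against $|c_{ij}|$ so as to land precisely on $|t|^{3}n_{0}^{-1}+t^{4}n_{0}^{-3/2}+|t|^{5}n_{0}^{-1}$.
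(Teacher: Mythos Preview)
Your proposal is correct and follows essentially the same route as the paper: the paper also factorizes $\E\{\xi_{N,i}\xi_{N,j}\exp(\iota t\Delta_{N,1})\mid\mathcal{F}_N\}=g_i(t)g_j(t)\prod_{k\neq i,j}\phi_{X_{N,k}}(t)$, expands $g_i(t)=\iota tV_{N,Poi}^{-1/2}y_{N,i}(1-\pi_{N,i})+\varpi_{N1,i}(t)$ with a $t^2$-size remainder (obtained there by explicit Bernoulli computation rather than your $|\xi_{N,i}|\le1$ trick, but the bounds are of the same order), controls $\prod_{k\neq i,j}\phi_{X_{N,k}}(t)-\exp(-t^2/2)$ via $|\exp(z)-1|\le|z|\exp|z|$ and Lemma~11.4.3 of Athreya--Lahiri exactly as you anticipate, and then assembles the sum over $\Gamma_N$ to recover the main term and the three remainder orders. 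The only cosmetic difference is that the paper works directly with the truncated product rather than writing $h_{ij}$ as a quotient $\phi_{\Delta_{N,1}}/(\phi_{X_{N,i}}\phi_{X_{N,j}})$, thereby sidestepping the need to justify nonvanishing of the individual factors.
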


\begin{proof}

	First, write
	\begin{eqnarray}
	\notag \E\left\{(I_{N,i}-\pi_{N,i})\exp(\iota tX_{N,i})\mid \mathcal{F}_N\right\}=\iota tV_{N,Poi}^{-1/2}y_{N,i}(1-\pi_{N,i})+\varpi_{N1,i}(t),
	\end{eqnarray}
	for $i=1,\ldots,N$, where
	\begin{eqnarray}
	\notag & & |\varpi_{N1,i}(t)| \\
	\notag & = & |\E\left\{(I_{N,i}-\pi_{N,i})\exp(\iota tX_{N,i})\mid \mathcal{F}_N\right\}-\iota tV_{N,Poi}^{-1/2}y_{N,i}(1-\pi_{N,i})| \\
	\notag & = & (1-\pi_{N,i})\pi_{N,i}\bigg|\bigg[\exp\left\{\iota tV_{N,Poi}^{-1/2}y_{N,i}(1-\pi_{N,i})\pi_{N,i}^{-1}\right\}-1 \\
	\notag & & ~~~~~~~~~~~~~~~~~~~~~~~~~~~~~~~~~~~~~~~~~~~-\iota tV_{N,Poi}^{-1/2}y_{N,i}(1-\pi_{N,i})\pi_{N,i}^{-1}\bigg] \\
	\notag & & ~~~~~~~~~~~~~~~~~~~~~~~ -\left\{\exp\left(-\iota tV_{N,Poi}^{-1/2}y_{N,i}\right)-1+\iota tV_{N,Poi}^{-1/2}y_{N,i}\right\}\bigg| \\
	\notag & \leq & 2^{-1}(1-\pi_{N,i})\pi_{N,i}\left\{\left|tV_{N,Poi}^{-1/2}y_{N,i}(1-\pi_{N,i})\pi_{N,i}^{-1}\right|^2+\left|tV_{N,Poi}^{-1/2}y_{N,i}\right|^2\right\} \\
	\notag & \leq & C_{3,2}t^2N^{-1}y_{N,i}^2,
	\end{eqnarray}
	and $C_{3,2}$ is a positive constant. The last but one inequality is due to the fact that for any real number $x$, $|\exp(\iota x)-1-\iota x|\leq|x|^2/2$. As a consequence, for any $(i,j)\in\Gamma_N$,
	\begin{eqnarray}
	\notag & & \E\left\{(I_{N,i}-\pi_{N,i})\exp(\iota tX_{N,i})\mid \mathcal{F}_N\right\}\E\left\{(I_{N,j}-\pi_{N,j})\exp(\iota tX_{N,j})\mid \mathcal{F}_N\right\} \\
	\notag & = & \left\{\iota tV_{N,Poi}^{-1/2}y_{N,i}(1-\pi_{N,i})+\varpi_{N1,i}(t)\right\}\left\{\iota tV_{N,Poi}^{-1/2}y_{N,j}(1-\pi_{N,j})+\varpi_{N1,j}(t)\right\} \\
	\notag & = & -t^2V_{N,Poi}^{-1}y_{N,i}y_{N,j}(1-\pi_{N,i})(1-\pi_{N,j})+\varpi_{N2,ij}(t),
	\end{eqnarray}
	where
	\begin{eqnarray}
	\notag && |\varpi_{N2,ij}(t)| \\
	\notag &\leq & \left|tV_{N,Poi}^{-1/2}y_{N,i}(1-\pi_{N,i})\varpi_{N1,j}(t)\right|+\left|tV_{N,Poi}^{-1/2}y_{N,j}(1-\pi_{N,j})\varpi_{N1,i}(t)\right| \\
	\notag & & +\left|\varpi_{N1,i}(t)\varpi_{N1,j}(t)\right| \\
	\notag &\leq & C_{3,3}\left\{|t|^3n_0^{1/2}N^{-2}(y_{N,i}^2|y_{N,j}|+|y_{N,i}|y_{N,j}^2)+t^4N^{-2}y_{N,i}^2y_{N,j}^2\right\}
	\end{eqnarray}
	with $C_{3,3}$ being a positive constant.
	
	Denote $\phi_{X_{N,i}}(t)=\E\left\{\exp(\iota tX_{N,i})\mid \mathcal{F}_N\right\}$ for $i=1,\ldots,N$. By the same technique as in the proof of Lemma 5.1 of \citet{Petrov1995}, we can show that
	\begin{eqnarray}
	\notag & & \left|\prod_{k\neq i,j}\phi_{X_{N,k}}(t)\right| \\
	\notag & \leq & \exp\left\{-t^2\sum_{k\neq i,j}\E(X_{N,k}^2\mid \mathcal{F}_N)/2+2|t|^3\sum_{k\neq i,j}\E\{|X_{N,k}|^3\mid \mathcal{F}_N\}/3\right\}.
	\end{eqnarray}
	
	Using the inequality $|\exp(z)-1|\leq|z|\exp(|z|)$ for all complex number $z$, we can obtain that
	\begin{eqnarray}
	\notag & & \left|\prod_{k\neq i,j}\phi_{X_{N,k}}(t)-\exp(-t^2/2)\right| \\
	\notag &= & \exp(-t^2/2)\left|\exp\left[\sum_{k\neq i,j}\log\{\phi_{X_{N,k}}(t)\}+t^2/2\right]-1\right| \\
	\notag &\leq & \exp(-t^2/2)\left|\sum_{k\neq i,j}\log\{\phi_{X_{N,k}}(t)\}+t^2/2\right| \exp\left[\left|\sum_{k\neq i,j}\log\{\phi_{X_{N,k}}(t)\}+t^2/2\right| \right]
	\end{eqnarray}
	By Lemma 11.4.3 of \citet{athreya2006measure}, 
	\begin{eqnarray}
	\notag & & \left|\sum_{k\neq i,j}\log\{\phi_{X_{N,k}}(t)\}+t^2/2\right| \\
	\notag & \leq & \sum_{k\neq i,j}\big|\log\{\phi_{X_{N,k}}(t)\}-2^{-1}(\iota t)^2\E(X_{N,k}^2\mid \mathcal{F}_N)\big| \\
	\notag & & ~~~~~~~~~~~~~~~~~~~~~~~~~~~~~~+2^{-1}t^2\E(X_{N,i}^2\mid \mathcal{F}_N)+2^{-1}t^2\E(X_{N,j}^2\mid \mathcal{F}_N) \\
	\notag & \leq & 5|t|^3\sum_{k\neq i,j}\E\{|X_{N,k}|^3\mid \mathcal{F}_N\}/12+2^{-1}t^2\left\{\E(X_{N,i}^2\mid \mathcal{F}_N)+\E(X_{N,j}^2\mid \mathcal{F}_N)\right\}.
	\end{eqnarray}
	for all $|t|\leq\big\{\max_{1\leq i\leq N}\E(X_{N,i}^2\mid \mathcal{F}_N)\big\}^{-1/2}$. 
	Thus,
	we obtain 
	\begin{eqnarray}
	\notag & & \left|\prod_{k\neq i,j}\phi_{X_{N,k}}(t)-\exp(-t^2/2)\right| \\
	\notag & \leq & \exp\left\{-t^2\sum_{k\neq i,j}\E(X_{N,k}^2\mid \mathcal{F}_N)/2+5|t|^3\sum_{k\neq i,j}\E\{|X_{N,k}|^3\mid \mathcal{F}_N\}/12\right\}\\
	\notag & & ~~~~~~\times\bigg[5|t|^3\sum_{k\neq i,j}\E\{|X_{N,k}|^3\mid \mathcal{F}_N\}/12 \\
	\notag & & ~~~~~~~~~~~~~~~+2^{-1}t^2\left\{\E(X_{N,i}^2\mid \mathcal{F}_N)+\E(X_{N,j}^2\mid \mathcal{F}_N)\right\}\bigg]
	\end{eqnarray}
	for all $|t|\leq\big\{\max_{1\leq i\leq N}\E(X_{N,i}^2\mid \mathcal{F}_N)\big\}^{-1/2}$.
	
	Finally, we have
	\begin{eqnarray}
	\notag & & \E\left\{\Delta_{N,2}\exp(\iota t\Delta_{N,1})\mid \mathcal{F}_N\right\} \\
	\notag &= & -2^{-1}V_{N,Poi}^{-3/2}\sum_{(i,j)\in\Gamma_N}y_{N,i}y_{N,j}^2(1-\pi_{N,j})\pi_{N,i}^{-1}\pi_{N,j}^{-2} \\
	\notag & & ~~~~~~~~~~~~~~~~~~~~~~~\times \E\left\{(I_{N,i}-\pi_{N,i})(I_{N,j}-\pi_{N,j})\exp(\iota t\Delta_{N,1})\mid \mathcal{F}_N\right\} \\
	\notag &= & -2^{-1}V_{N,Poi}^{-3/2}\sum_{(i,j)\in\Gamma_N}y_{N,i}y_{N,j}^2(1-\pi_{N,j})\pi_{N,i}^{-1}\pi_{N,j}^{-2}\prod_{k\neq i,j}\phi_{X_{N,k}}(t) \\
	\notag && ~~~~~~~~~~~~~~~~~~~~~~~\times \E\left\{(I_{N,i}-\pi_{N,i})\exp(\iota tX_{N,i})\mid \mathcal{F}_N\right\} \\
	\notag & &~~~~~~~~~~~~~~~~~~~~~~~\times \E\left\{(I_{N,j}-\pi_{N,j})\exp(\iota tX_{N,j})\mid \mathcal{F}_N\right\} \\
	\notag &= & -2^{-1}V_{N,Poi}^{-3/2}\sum_{(i,j)\in\Gamma_N}y_{N,i}y_{N,j}^2(1-\pi_{N,j})\pi_{N,i}^{-1}\pi_{N,j}^{-2}\prod_{k\neq i,j}\phi_{X_{N,k}}(t) \\
	\notag && ~~~~~~~~~~~~~~~ \times \left\{-t^2V_{N,Poi}^{-1}y_{N,i}y_{N,j}(1-\pi_{N,i})(1-\pi_{N,j})+\varpi_{N2,ij}(t)\right\} \\
	\notag &= & 2^{-1}t^2V_{N,Poi}^{-5/2}\sum_{(i,j)\in\Gamma_N}y_{N,i}^2y_{N,j}^3(1-\pi_{N,i})(1-\pi_{N,j})^2\pi_{N,i}^{-1}\pi_{N,j}^{-2}\prod_{k\neq i,j}\phi_{X_{N,k}}(t) \\
	\notag && ~~-2^{-1}V_{N,Poi}^{-3/2}\sum_{i\neq j}y_{N,i}y_{N,j}^2(1-\pi_{N,j})\pi_{N,i}^{-1}\pi_{N,j}^{-2}\prod_{k\neq i,j}\phi_{X_{N,k}}(t)\varpi_{N2,ij}(t) \\
	\notag &= & 2^{-1}t^2\exp(-t^2/2)V_{N,Poi}^{-5/2}\Theta_{N,Poi}^{(2,3)}+\varpi_{N,1}(t)
	\end{eqnarray}
	for all $|t|\leq\big\{\max_{1\leq i\leq N}\E(X_{N,i}^2\mid \mathcal{F}_N)\big\}^{-1/2}$ and $\varpi_{N,1}(t)$ satisfies
	\begin{eqnarray}
	\notag && |\varpi_{N,1}(t)| \\
	\notag &\leq & C_{3,4}\exp\left\{-t^2/2+2|t|^3\sum_{i=1}^{N}\E\{|X_{N,i}|^3\mid \mathcal{F}_N\}/3+t^2\max_{1\leq i\leq N}\E(X_{N,i}^2\mid \mathcal{F}_N)\right\} \\
	\notag && ~~~~~~~~~~~~~~\times \bigg\{|t|^3n_0^{-1}N^{-2}\sum_{i\neq j}\left(|y_{N,i}|^3|y_{N,j}|^3+y_{N,i}^2y_{N,j}^4\right)\\
	\notag & & ~~~~~~~~~~~~~~~~~~~+t^4n_0^{-3/2}N^{-2}\sum_{i\neq j}|y_{N,i}|^3y_{N,j}^4 \\
	\notag && ~~~~~~~~~~~~~~~~~~~+t^4n_0^{-1/2}N^{-3}\sum_{i\neq j}(y_{N,i}^4|y_{N,j}|^3+y_{N,i}^2|y_{N,j}|^5) \\
	\notag && ~~~~~~~~~~~~~~~~~~~+|t|^5n_0^{-1}\bigg(N^{-2}\sum_{i\neq j}y_{N,i}^2|y_{N,j}|^3\bigg)\bigg(N^{-1}\sum_{i=1}^{N}|y_{N,i}|^3\bigg)\bigg\} \\
	\notag &\leq & C_{3,1}\exp\left\{-t^2/2+2|t|^3V_{N,Poi}^{-3/2}\nu_{N,Poi}^{(3)}/3+t^2\max_{1\leq i\leq N}\E(X_{N,i}^2\mid \mathcal{F}_N)\right\} \\
	\notag & & ~~~~~~~~~~~~~~~~~~~~~~~~~~~~~~~~~~~~~~~~~~~~~\times\left(|t|^3n_0^{-1}+t^4n_0^{-3/2}+|t|^5n_0^{-1}\right),
	\end{eqnarray}
	where $C_{3,4}$ is a positive constant.
\end{proof}

\begin{lemma}\label{lem_03}
	Denote $\hat{Y}_{N,Poi}=\sum_{i=1}^{N}y_{N,i}\pi_{N,i}^{-1}I_{N,i}$, $Y_N=\sum_{i=1}^{N}y_{N,i}$ and $\hat{V}_{N,Poi}=\sum_{i=1}^{N}y_{N,i}^2(1-\pi_{N,i})\pi_{N,i}^{-2}I_{N,i}$, then under conditions (C\ref{cond: C1poi})--(C\ref{cond: C4poi}),
	\begin{eqnarray}
	\notag \hat{V}_{N,Poi}^{-1/2}\left(\hat{Y}_{N,Poi}-Y_N\right)=  \Delta_{N,1}+\Delta_{N,2}+\Delta_{N,3}+\Delta_{N,4},
	\end{eqnarray}
	where 
	\begin{eqnarray}
	\notag \Delta_{N,1} &= & V_{N,Poi}^{-1/2}\sum_{i=1}^{N}y_{N,i}\pi_{N,i}^{-1}(I_{N,i}-\pi_{N,i}), \\
	\notag \Delta_{N,2} &= & -2^{-1}V_{N,Poi}^{-3/2}\sum_{(i,j)\in\Gamma_N}y_{N,i}y_{N,j}^2(1-\pi_{N,j})\pi_{N,i}^{-1}\pi_{N,j}^{-2}(I_{N,i}-\pi_{N,i})(I_{N,j}-\pi_{N,j}), \\
	\notag \Delta_{N,3} & = & -2^{-1}V_{N,Poi}^{-3/2}\sum_{i=1}^{N}y_{N,i}^3(1-\pi_{N,i})^2\pi_{N,i}^{-2},
	\end{eqnarray}
	and recall that $\Gamma_N=\{(i,j): i,j=1,\ldots,N \text{~and~} i\neq j\}$. In addition, $\Delta_{N,4}$ satisfies
	$$ \P_{Poi}\left(|\Delta_{N,4}|\geq n_0^{-1/2}(\log n_0)^{-1}\right)=o(n_0^{-1/2}). $$
\end{lemma}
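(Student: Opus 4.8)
The plan is to perform a second-order Taylor/stochastic expansion of the studentized statistic $\hat V_{N,Poi}^{-1/2}(\hat Y_{N,Poi}-Y_N)$ around the deterministic normalizer $V_{N,Poi}$, and to absorb all higher-order remainders into $\Delta_{N,4}$. Write $\hat V_{N,Poi}^{-1/2}(\hat Y_{N,Poi}-Y_N)=V_{N,Poi}^{-1/2}(\hat Y_{N,Poi}-Y_N)\cdot(\hat V_{N,Poi}/V_{N,Poi})^{-1/2}$. The first factor is exactly $\Delta_{N,1}$. For the second factor, set $W_N=V_{N,Poi}^{-1}(\hat V_{N,Poi}-V_{N,Poi})=\sum_{i=1}^N V_{N,Poi}^{-1}y_{N,i}^2(1-\pi_{N,i})\pi_{N,i}^{-2}(I_{N,i}-\pi_{N,i})$, so that $(\hat V_{N,Poi}/V_{N,Poi})^{-1/2}=(1+W_N)^{-1/2}$. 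Using $(1+w)^{-1/2}=1-\tfrac12 w+\tfrac38 w^2(1+\xi)^{-5/2}$ on the event where $W_N$ is small, one gets
\begin{equation}
\notag \hat V_{N,Poi}^{-1/2}(\hat Y_{N,Poi}-Y_N)=\Delta_{N,1}-\tfrac12\Delta_{N,1}W_N+\Delta_{N,1}R_N,
\end{equation}
where $R_N$ collects the quadratic-and-higher terms in $W_N$. The product $-\tfrac12\Delta_{N,1}W_N$ is a double sum over $(i,j)$; splitting it into the off-diagonal part $(i,j)\in\Gamma_N$ and the diagonal part $i=j$ yields precisely $\Delta_{N,2}$ (off-diagonal, replacing $I_{N,i}^2$ by $I_{N,i}$ is legitimate only after handling the diagonal) plus the diagonal contribution; taking the expectation-type leading term of the diagonal contribution — i.e. replacing $(I_{N,i}-\pi_{N,i})$ factors appropriately and using $I_{N,i}^2=I_{N,i}$ — produces $\Delta_{N,3}=-2^{-1}V_{N,Poi}^{-3/2}\sum_i y_{N,i}^3(1-\pi_{N,i})^2\pi_{N,i}^{-2}$, with the mean-zero diagonal fluctuation pushed into $\Delta_{N,4}$.

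Concretely, I would define $\Delta_{N,4}$ as exactly the difference $\hat V_{N,Poi}^{-1/2}(\hat Y_{N,Poi}-Y_N)-\Delta_{N,1}-\Delta_{N,2}-\Delta_{N,3}$ and then show the tail bound. The remainder splits into four pieces: (a) the quadratic-in-$W_N$ term $\Delta_{N,1}R_N$; (b) the diagonal fluctuation $-\tfrac12\Delta_{N,1}\sum_i V_{N,Poi}^{-1}y_{N,i}^2(1-\pi_{N,i})\pi_{N,i}^{-2}(I_{N,i}-\pi_{N,i})$ minus its contribution already captured in $\Delta_{N,3}$; (c) the correction from writing $I_{N,i}^2=I_{N,i}$ when reorganizing the double sum; and (d) the event where $|W_N|$ is not small, on which the Taylor expansion is invalid. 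For (d), Lemma \ref{lemma: almost sure variance poi} gives $n_0N^{-2}(\hat V_{N,Poi}-V_{N,Poi})\to0$ a.s., hence $W_N\to0$ a.s., so $\P_{Poi}(|W_N|>1/2)=o(n_0^{-1/2})$; in fact a fourth-moment Markov bound as in the proof of Lemma \ref{lemma: almost sure variance poi} gives $\P_{Poi}(|W_N|>1/2)=O(N^{-2\alpha})$, which is $o(n_0^{-1/2})$ since $\alpha>1/2$ and $n_0\asymp N^\alpha$. For (a)–(c), I would bound each term by a product of the form (something $O_p$ of a negative power of $n_0$) times $|\Delta_{N,1}|$ or a similar $O_p(1)$ factor, using that $\Delta_{N,1}=O_p(1)$ (unit conditional variance), $W_N=O_p(n_0^{-1/2})$ (its conditional variance is $O(n_0^{-1})$ by the computation $\sum_i V_{N,Poi}^{-2}y_{N,i}^4(1-\pi_{N,i})^3\pi_{N,i}^{-3}=O(n_0^{-1})$ from Lemma \ref{lemma: almost sure variance poi}), and analogous moment bounds for the diagonal fluctuation term. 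Each such product is $O_p(n_0^{-1})$ or smaller, which beats the threshold $n_0^{-1/2}(\log n_0)^{-1}$; converting the $O_p$ statements to the required $o(n_0^{-1/2})$ tail probability is done via Markov's inequality applied to a suitable (second or fourth) moment, exactly as in the proof of Lemma \ref{lemma: almost sure variance poi}, since each term is itself a polynomial in the independent centered Bernoullis $I_{N,i}-\pi_{N,i}$ with coefficients controlled by (C\ref{cond: C1poi})–(C\ref{cond: C4poi}).

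The main obstacle I anticipate is the bookkeeping in separating the genuine $n_0^{-1/2}$-order terms ($\Delta_{N,2}$, which is mean-zero, and $\Delta_{N,3}$, which is deterministic) from everything that must go into $\Delta_{N,4}$: one must be careful that (i) the off-diagonal reorganization of the double sum $\Delta_{N,1}W_N$ does not secretly contain an $n_0^{-1/2}$-order mean term that should have been displayed, and (ii) the quadratic remainder $\Delta_{N,1}R_N$, though formally $O_p(n_0^{-1})$, is controlled \emph{with the right tail probability} and not merely in probability — this forces the explicit truncation on $\{|W_N|\le 1/2\}$ and a separate small-probability bound on the complement. The moment computations themselves are routine given Lemma \ref{lemma: almost sure variance poi} and the boundedness of $N\pi_{N,i}/n_0$ from (C\ref{cond: C1poi}); the only subtlety is keeping track of powers of $n_0$ and $N$ so that every remainder term is verified to be $o(n_0^{-1/2})$ in the required tail sense, in particular handling the $(\log n_0)^{-1}$ factor, which is harmless because all remainders are in fact $O_p(n_0^{-1})$ with polynomially small tails.
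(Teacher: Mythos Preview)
Your proposal is correct and follows essentially the same route as the paper: write $T_{N,Poi}=\Delta_{N,1}(1+\Lambda_{N,1})^{-1/2}$ with $\Lambda_{N,1}$ equal to your $W_N$, expand $(1+\Lambda_{N,1})^{-1/2}=1-\tfrac12\Lambda_{N,1}+O_p(\Lambda_{N,1}^2)$, split the cross term $-\tfrac12\Delta_{N,1}\Lambda_{N,1}$ into the off-diagonal part $\Delta_{N,2}$ and the diagonal part whose mean is $\Delta_{N,3}$ and whose centered fluctuation (the paper calls it $\Lambda_{N,2}$) has second moment $O(n_0^{-2})$, and finally control $\Delta_{N,1}\Lambda_{N,1}^2$. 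Two small remarks: your item (c) is superfluous, since the diagonal of $-\tfrac12\Delta_{N,1}W_N$ already involves $(I_{N,i}-\pi_{N,i})^2$ directly and centering it produces $\Delta_{N,3}+\Lambda_{N,2}$ without any separate $I_{N,i}^2=I_{N,i}$ step; and for the tail of $\Delta_{N,1}\Lambda_{N,1}^2$ the paper does not bound a joint moment but instead splits the event as $\{|\Delta_{N,1}|\ge n_0^{1/8}\log n_0\}\cup\{\Lambda_{N,1}^2\ge n_0^{-5/8}(\log n_0)^{-2}\}$ and applies fourth-moment Markov bounds to each factor separately, which avoids needing $E(\Lambda_{N,1}^8)$ and makes your explicit truncation on $\{|W_N|\le 1/2\}$ unnecessary.
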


\begin{proof}
	Denote $\Lambda_{N,1}=V_{N,Poi}^{-1}\sum_{i=1}^{N}y_{N,i}^2(1-\pi_{N,i})\pi_{N,i}^{-2}(I_{N,i}-\pi_{N,i})$. Mentioned that $\E(\Lambda_{N,1}\mid \mathcal{F}_N)=0$ and
	\begin{eqnarray}
	\notag \E\left(\Lambda_{N,1}^2\mid \mathcal{F}_N\right) = V_{N,Poi}^{-2}\sum_{i=1}^{N}y_{N,i}^4(1-\pi_{N,i})^3\pi_{N,i}^{-3}=O(n_0^{-1}),
	\end{eqnarray}
	we have that $\Lambda_{N,1}=O _p(n_0^{-1/2})$. In addition,
	\begin{eqnarray}
	\notag & & \E(\Lambda_{N,1}^4\mid \mathcal{F}_N) \\
	\notag & = & \E\left[\bigg\{V_{N,Poi}^{-1}\sum_{i=1}^{N}y_{N,i}^2(1-\pi_{N,i})\pi_{N,i}^{-2}(I_{N,i}-\pi_{N,i})\bigg\}^4\mid \mathcal{F}_N\right] \\
	\notag & = & V_{N,Poi}^{-4}\bigg[\sum_{i=1}^{N}y_{N,i}^8(1-\pi_{N,i})^5\pi_{N,i}^{-4}\left\{(1-\pi_{N,i})^3\pi_{N,i}^{-3}+1\right\} \\
	\notag & & ~~~~~~~~~~~~~~+\sum_{(i,j)\in\Gamma_N}y_{N,i}^4y_{N,j}^4(1-\pi_{N,i})^3(1-\pi_{N,j})^3\pi_{N,i}^{-3}\pi_{N,j}^{-3}\bigg] \\
	\notag  &= & O(n_0^{-2}).
	\end{eqnarray}
	By some algebra,
	\begin{eqnarray}
	\notag & & \hat{V}_{N,Poi}^{-1/2}\left(\hat{Y}_{N,Poi}-Y_N\right) \\
	\notag & = & \left\{V_{N,Poi}^{-1/2}\sum_{i=1}^{N}y_{N,i}\pi_{N,i}^{-1}(I_{N,i}-\pi_{N,i})\right\}\left(1+\Lambda_{N,1}\right)^{-1/2} \\
	\notag & = & \left\{V_{N,Poi}^{-1/2}\sum_{i=1}^{N}y_{N,i}\pi_{N,i}^{-1}(I_{N,i}-\pi_{N,i})\right\}\{1-2^{-1}\Lambda_{N,1} +O_p(\Lambda_{N,1}^2)\}.
	\end{eqnarray}
	Use the notations of $\Delta_{N,1}$, $\Delta_{N,2}$ and $\Delta_{N,3}$, we have
	\begin{eqnarray}
	\notag & & \hat{V}_{N,Poi}^{-1/2}\left(\hat{Y}_{N,Poi}-Y_N\right) \\
	\notag & = & \Delta_{N,1}\left\{1-2^{-1}\Lambda_{N,1}+O_p(\Lambda_{N,1}^2)\right\} \\
	\notag & = & \Delta_{N,1}-2^{-1}\Delta_{N,1}\Lambda_{N,1}+O_p(\Delta_{N,1}\Lambda_{N,1}^2) \\
	\notag & = & \Delta_{N,1}+\Delta_{N,2}+\Delta_{N,3}+\Lambda_{N,2}+O_p(\Delta_{N,1}\Lambda_{N,1}^2)
	\end{eqnarray}
	where $$\Lambda_{N,2}=-2^{-1}V_{N,Poi}^{-3/2}\sum_{i=1}^{N}y_{N,i}^3(1-\pi_{N,i})\pi_{N,i}^{-3}\left\{(I_{N,i}-\pi_{N,i})^2-\E(I_{N,i}-\pi_{N,i})^2\right\}.$$ It remains to show
	\begin{eqnarray}\label{lem_03_eq_01}
	\P_{Poi}\left(|\Lambda_{N,2}|\geq n_0^{-1/2}(\log n_0)^{-1}\right)=o(n_0^{-1/2})
	\end{eqnarray}
	and
	\begin{eqnarray}\label{lem_03_eq_02}
	\P_{Poi}\left(|\Delta_{N,1}\Lambda_{N,1}^2|\geq n_0^{-1/2}(\log n_0)^{-1}\right)=o(n_0^{-1/2}). 
	\end{eqnarray}
	For \eqref{lem_03_eq_01}, it is a directly consequence of $$\E\left(\Lambda_{N,2}\mid \mathcal{F}_N\right)=0$$ and
	$$ \E\left(\Lambda_{N,2}^2\mid \mathcal{F}_N\right)=O(n_0^{-2}).$$
	For \eqref{lem_03_eq_02}, as
	\begin{eqnarray}
	\notag & & \E(\Delta_{N,1}^4\mid \mathcal{F}_N) \\
	\notag & = & \sum_{i=1}^{N}\E(X_{N,i}^4\mid \mathcal{F}_N)+\sum_{(i,j)\in\Gamma_N}\E(X_{N,i}^2\mid \mathcal{F}_N)\E(X_{N,j}^2\mid \mathcal{F}_N) \\
	\notag & = & O(1),
	\end{eqnarray}	
	where $X_{N,i}=V_{N,Poi}^{-1/2}y_{N,i}\pi_{N,i}^{-1}(I_{N,i}-\pi_{N,i})$, we have that
	\begin{eqnarray}
	\notag & & \P_{Poi}\left(|\Delta_{N,1}\Lambda_{N,1}^2|\geq n_0^{-1/2}(\log n_0)^{-1}\right) \\
	\notag & \leq & \P_{Poi}\left(|\Delta_{N,1}|\geq n_0^{1/8}(\log n_0)\right) + \P_{Poi}\left(|\Lambda_{N,1}^2|\geq n_0^{-5/8}(\log n_0)^{-2}\right) \\
	\notag & \leq & n_0^{-1/2}(\log n_0)^{-4}\E(\Delta_{N,1}^4\mid \mathcal{F}_N)+n_0^{5/4}(\log n_0)^{4}\E(\Lambda_{N,1}^4\mid \mathcal{F}_N) \\
	\notag & =  & o(n_0^{-1/2}).
	\end{eqnarray}
	Thus, we finish the proof of this lemma.
\end{proof}

\begin{lemma}\label{lem_04}
	Assume condition (C\ref{cond: C4poi}) holds. Then for any positive integer $s$ satisfies $s\to\infty$ as $N\to\infty$ and $s=o(N)$, there exists a subset $\{y_{N,\ell_1},\ldots,y_{N,\ell_s}\}\subset \{y_{N,1},\ldots,y_{N,N}\}$ such that
	\begin{align}\label{lem_04_eq_01}
	\lim_{s\to\infty}s^{-1}\sum_{i=1}^{s}y_{N,\ell_i}^4<\infty.
	\end{align}
\end{lemma}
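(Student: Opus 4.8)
The plan is to exhibit the subset explicitly rather than argue abstractly. For each $N$ I would relabel the population so that $y_{N,(1)}^4 \le y_{N,(2)}^4 \le \cdots \le y_{N,(N)}^4$ and take $\{y_{N,\ell_1},\ldots,y_{N,\ell_s}\}$ to be the $s$ elements with the smallest fourth powers, i.e.\ $y_{N,\ell_i}=y_{N,(i)}$ for $i=1,\ldots,s$ (this requires only $s\le N$, which holds for $N$ large since $s=o(N)$). The reason this choice works is the elementary monotone-rearrangement fact: for a nonnegative finite sequence the average of any initial segment, after sorting in increasing order, is at most the average of the whole sequence, so
\[
s^{-1}\sum_{i=1}^{s}y_{N,\ell_i}^4 \;\le\; N^{-1}\sum_{i=1}^{N}y_{N,i}^4
\]
for every $s\le N$.

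The second step is to control the full average by (C\ref{cond: C4poi}). By the Cauchy--Schwarz inequality (equivalently the power-mean inequality applied with exponents $4$ and $8$),
\[
N^{-1}\sum_{i=1}^{N}y_{N,i}^4 \;\le\; \left(N^{-1}\sum_{i=1}^{N}y_{N,i}^8\right)^{1/2},
\]
and the right-hand side converges to $C_3^{1/2}$ by (C\ref{cond: C4poi}); in particular it is bounded, say by $2C_3^{1/2}$, for all $N$ large enough. Combining the two displays gives $s^{-1}\sum_{i=1}^{s}y_{N,\ell_i}^4\le 2C_3^{1/2}$ eventually, hence $\limsup_{s\to\infty}s^{-1}\sum_{i=1}^{s}y_{N,\ell_i}^4\le C_3^{1/2}<\infty$, which is \eqref{lem_04_eq_01}.

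I do not expect any serious obstacle here; the only point needing a word of care is the meaning of the limit in \eqref{lem_04_eq_01}. Since the chosen indices $\ell_1,\ldots,\ell_s$ and the values $y_{N,i}$ both vary with $N$ (with $s=s_N\to\infty$, $s_N=o(N)$), the displayed quantity is really a sequence indexed by $N$, and the argument above bounds it uniformly; if an honest limit is wanted one may pass to a convergent subsequence, but the uniform bound is all that the applications of this lemma use. The monotone-rearrangement inequality invoked in the first step is itself a one-line estimate — $\sum_{i=1}^{s}y_{N,(i)}^4\le s\,y_{N,(s)}^4$ while $\sum_{i=s+1}^{N}y_{N,(i)}^4\ge (N-s)\,y_{N,(s)}^4$, so $(N-s)\sum_{i=1}^{s}y_{N,(i)}^4\le s\sum_{i=s+1}^{N}y_{N,(i)}^4$ — so I would simply state it and omit the derivation.
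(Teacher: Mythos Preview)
Your argument is correct, and it is a genuinely different route from the paper's. The paper argues by contradiction: it partitions $\{y_{N,1},\ldots,y_{N,N}\}$ into $\lfloor N/s\rfloor$ consecutive blocks of size $s$, assumes \emph{every} size-$s$ subset violates \eqref{lem_04_eq_01}, and then sums the block averages to force $N^{-1}\sum_{i=1}^N y_{N,i}^4\to\infty$, which contradicts (C\ref{cond: C4poi}) (implicitly via the same Cauchy--Schwarz step you spell out). Your approach is direct and constructive: you pick the $s$ elements with the smallest fourth powers, use the monotone-rearrangement inequality to bound their average by the full average, and then bound the full average by $(N^{-1}\sum y_{N,i}^8)^{1/2}\to C_3^{1/2}$.

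What each buys: the paper's partition argument avoids having to sort or otherwise describe the subset, but it is nonconstructive and leaves the Cauchy--Schwarz bridge from eighth to fourth moments unstated. Your argument is shorter, makes that bridge explicit, and names the subset, which is exactly what the downstream application needs (the proof of Lemma~\ref{lemma: 05} simply relabels so that $m^{-1}\sum_{i=1}^m y_{N,i}^4=O(1)$). Your closing remark about $\lim$ versus $\limsup$ is also well taken; the paper's phrasing is loose in the same way, and the uniform bound is indeed all that is used.
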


\begin{proof}
	We prove this lemma by contradiction.
	
	First, we split the population $\{y_{N,1},\ldots,y_{N,N}\}$ into $\lfloor N/s\rfloor$ subsets, with the first $\lfloor N/s\rfloor-1$ subsets as $\{y_{(j-1)s+1},\ldots,y_{N,js}\}_{j=1}^{\lfloor N/s\rfloor-1}$ and the last subset as $\{y_{(\lfloor N/s\rfloor-1)s+1},\ldots,y_{N,N}\}$. Here $\lfloor x\rfloor$ denotes the integer part of $x\in\mathbb{R}$. Assume that \eqref{lem_04_eq_01} is not satisfied by any subset  of $\{y_{N,1},\ldots,y_{N,N}\}$ with cardinality $s$. Then
	$$ \lim_{s\to\infty}s^{-1}\sum_{i=1}^{s}y_{N,(j-1)s+i}^4=\infty, $$
	for all $j=1,\ldots,\lfloor N/s\rfloor-1$. This implies
	\begin{eqnarray}
	& & N^{-1}\sum_{i=1}^{N}y_{N,i}^4 \notag \\
	& = & N^{-1}s\sum_{j=1}^{\lfloor N/s\rfloor-1}s^{-1}\sum_{i=1}^{s}y_{N,(j-1)s+i}^4+N^{-1}\sum_{i=(\lfloor N/s\rfloor-1)s+1}^{N}y_{N,i}^4 \notag \\
	& \to & \infty \notag
	\end{eqnarray}
	as $N\to\infty$. 
	
	This result is contradicted with condition (C\ref{cond: C4poi}). Thus, there exists at least one subset of $\{y_{N,1},\ldots,y_{N,N}\}$ with cardinality $s$ satisfies \eqref{lem_04_eq_01}.
\end{proof}

\begin{lemma}\label{lemma: 05}
	Denote $\hat{Y}_{N,Poi}=\sum_{i=1}^{N}y_{N,i}\pi_{N,i}^{-1}I_{N,i}$, $Y_N=\sum_{i=1}^{N}y_{N,i}$ and $\hat{V}_{N,Poi}=\sum_{i=1}^{N}y_{N,i}^2(1-\pi_{N,i})\pi_{N,i}^{-2}I_{N,i}$. Let $\hat{F}_{N,Poi}(z)=\P_{Poi}(T_{N,Poi}\leq z)$ be the cumulative distribution function (cdf) of $T_{N,Poi}$ under Poisson sampling, where $T_{N,Poi}=\hat{V}_{N,Poi}^{-1/2}\left(\hat{Y}_{N,Poi}-Y_N\right)$. Then, under conditions (C\ref{cond: C1poi})--(C\ref{cond: Pois 5}),
	\begin{eqnarray}
	& & \hat{F}_{N,Poi}(z) = \Phi(z)+\left(\frac{\mu_{N,Poi}^{(3)}}{6V_{N,Poi}^{3/2}}(1-z^2)+\frac{\tau_{N,Poi}^{(3)}}{2V_{N,Poi}^{3/2}}z^2\right)\phi(z) +o(n_0^{-1/2}) \notag
	\end{eqnarray}
	\iffalse
	\begin{eqnarray}
	& & F_{N,Poi}(z) = \Phi(z)-6^{-1}(z^2-1)\phi(z)\left(V_{N,Poi}^{-3/2}\mu_{N,Poi}^{(3)}-3V_{N,Poi}^{-5/2}\Theta_{N,Poi}^{(2,3)}\right) \notag \\
	& & ~~~~~~~~~~~~~~~~~~~~~~~~~~+2^{-1}\phi(z)V_{N,Poi}^{-3/2}\tau_{N,Poi}^{(3)}+o(n_0^{-1/2})
	\end{eqnarray}
	\fi
	uniformly in $z\in\mathbb{R}$, where
	\iffalse
	\begin{eqnarray}
	\notag \Theta_{N,Poi}^{(2,3)} = \sum_{i\neq j}y_{N,i}^2y_{N,j}^3(1-\pi_{N,i})(1-\pi_{N,j})^2\pi_{N,i}^{-1}\pi_{N,j}^{-2}
	\end{eqnarray}
	and
	\fi
	\begin{eqnarray}
	\notag \mu_{N,Poi}^{(3)}=\sum_{i=1}^Ny_{N,i}^3(1-\pi_{N,i})\{(1-\pi_{N,i})^2\pi_{N,i}^{-2}-1\}
	\end{eqnarray}
	and
	\begin{eqnarray}
	\notag \tau_{N,Poi}^{(3)}=\sum_{i=1}^{N}y_{N,i}^3(1-\pi_{N,i})^2\pi_{N,i}^{-2}.
	\end{eqnarray}
\end{lemma}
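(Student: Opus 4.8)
The plan is to prove this by the characteristic-function method, feeding the decomposition of $T_{N,Poi}$ supplied by Lemma~\ref{lem_03} into the c.f.\ estimates of Lemmas~\ref{lem_01}--\ref{lem_02} and using Lemma~\ref{lem_04} to treat the high-frequency range. By Lemma~\ref{lem_03}, $T_{N,Poi}=\Delta_{N,1}+\Delta_{N,2}+\Delta_{N,3}+\Delta_{N,4}$ with $\P_{Poi}(|\Delta_{N,4}|\ge n_0^{-1/2}(\log n_0)^{-1})=o(n_0^{-1/2})$. Let $S=\Delta_{N,1}+\Delta_{N,2}+\Delta_{N,3}$ and let $G_N(z)$ denote the right-hand side of the claimed expansion without the $o(n_0^{-1/2})$ term. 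Since the bracket in $G_N$ is a polynomial in $z$ times $\phi(z)$ with coefficients of order $n_0^{-1/2}$, we have $\sup_z|G_N'(z)|=O(1)$, hence $G_N(z\pm n_0^{-1/2}(\log n_0)^{-1})=G_N(z)+o(n_0^{-1/2})$ uniformly; sandwiching $\{T_{N,Poi}\le z\}$ between $\{S\le z\mp n_0^{-1/2}(\log n_0)^{-1}\}$ on the event $\{|\Delta_{N,4}|<n_0^{-1/2}(\log n_0)^{-1}\}$ reduces the statement to $\sup_z|\P_{Poi}(S\le z)-G_N(z)|=o(n_0^{-1/2})$. For this I would invoke the smoothing inequality \citep{feller2008introduction}, $\sup_z|\P_{Poi}(S\le z)-G_N(z)|\le\pi^{-1}\int_{-T_N}^{T_N}|t|^{-1}\,|\psi_S(t)-\hat g_N(t)|\,dt+C\,T_N^{-1}$, where $\psi_S(t)=\E\{e^{\iota tS}\mid\mathcal{F}_N\}$, $T_N\to\infty$ is chosen with $T_N/n_0^{1/2}\to\infty$ so that $C\,T_N^{-1}=o(n_0^{-1/2})$, and $\hat g_N$ is the Fourier transform of $G_N'$; a routine inversion, using $\int_{-\infty}^z u\phi(u)\,du=-\phi(z)$ and $\int_{-\infty}^z(u^2-1)\phi(u)\,du=-z\phi(z)$, gives $\hat g_N(t)=e^{-t^2/2}\{1+\iota t\Delta_{N,3}+6^{-1}(\iota t)^3V_{N,Poi}^{-3/2}\mu_{N,Poi}^{(3)}+2^{-1}\iota t^3V_{N,Poi}^{-5/2}\Theta_{N,Poi}^{(2,3)}\}$, and one checks that this matches the bracket in the statement after replacing $V_{N,Poi}^{-5/2}\Theta_{N,Poi}^{(2,3)}$ by $V_{N,Poi}^{-3/2}\tau_{N,Poi}^{(3)}$: these differ by $o(n_0^{-1/2})$ because $V_{N,Poi}\tau_{N,Poi}^{(3)}-\Theta_{N,Poi}^{(2,3)}=\sum_{i=1}^{N}y_{N,i}^5(1-\pi_{N,i})^3\pi_{N,i}^{-3}$ is of smaller order, by (C\ref{cond: C1poi}) and (C\ref{cond: C4poi}).

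For the low-frequency part, $|t|\le A(\log n_0)^{1/2}$ with a fixed $A>1$ (well inside the ranges of validity of Lemmas~\ref{lem_01}--\ref{lem_02}), I would write $e^{\iota t\Delta_{N,3}}=1+\iota t\Delta_{N,3}+O(t^2\Delta_{N,3}^2)$ and $e^{\iota t\Delta_{N,2}}=1+\iota t\Delta_{N,2}+r_2$ with $|r_2|\le 2^{-1}t^2\Delta_{N,2}^2$, so that $\psi_S(t)=e^{\iota t\Delta_{N,3}}\bigl[\phi_{\Delta_{N,1}}(t)+\iota t\,\E\{\Delta_{N,2}e^{\iota t\Delta_{N,1}}\mid\mathcal{F}_N\}+\E\{r_2\,e^{\iota t\Delta_{N,1}}\mid\mathcal{F}_N\}\bigr]$. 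Plugging in the third-order expansion of $\phi_{\Delta_{N,1}}$ from Lemma~\ref{lem_01} and the evaluation of the cross term from Lemma~\ref{lem_02}, together with a companion bound $\E\{\Delta_{N,2}^2e^{\iota t\Delta_{N,1}}\mid\mathcal{F}_N\}=O(n_0^{-1})e^{-ct^2}$ obtained by the same pairing technique as in Lemma~\ref{lem_02} (since $\Delta_{N,2}^2$ is a degree-four form in the $I_{N,i}-\pi_{N,i}$), every term of $\psi_S(t)$ beyond $\hat g_N(t)$ is $O\bigl(\mathrm{poly}(|t|)\,e^{-ct^2}\,n_0^{-1}\bigr)$; dividing by $|t|$ and integrating over the line yields $O(n_0^{-1})=o(n_0^{-1/2})$, while $\int_{|t|>A(\log n_0)^{1/2}}|t|^{-1}|\hat g_N(t)|\,dt=o(n_0^{-1/2})$ for $A>1$ by the Gaussian tail.

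It remains to bound $\int_{A(\log n_0)^{1/2}\le|t|\le T_N}|t|^{-1}|\psi_S(t)|\,dt$. On the sub-range where $|t|\le V_{N,Poi}^{3/2}/(4\nu_{N,Poi}^{(3)})$ (which is $\asymp n_0^{1/2}$) Lemma~\ref{lem_01} gives $|\phi_{\Delta_{N,1}}(t)|\le e^{-t^2/3}$, and combined with the companion estimates of the preceding paragraph this makes $|\psi_S(t)|$ small enough to integrate against $|t|^{-1}$ to $o(n_0^{-1/2})$. For $|t|\ge t_0$ I would use the Cram\'er-type hypothesis (C\ref{cond: Pois 5}): pick, via Lemma~\ref{lem_04} with $s$ equal to the integer $m=\lfloor n_0^{-1/2}N/(\log n_0)\rfloor$ of (C\ref{cond: Pois 5}), a block $L$ of indices with $m^{-1}\sum_{i\in L}y_{N,i}^4=O(1)$; conditioning on $\{I_{N,i}\}_{i\notin L}$ turns $\Delta_{N,1}+\Delta_{N,2}$ into a quadratic form in $\{I_{N,i}\}_{i\in L}$ whose linear coefficients are small perturbations of $V_{N,Poi}^{-1/2}y_{N,i}\pi_{N,i}^{-1}$, and peeling off the $I_{N,i}$, $i\in L$, one at a time --- comparing, through the continuity of $\cos$, with the unperturbed product $\prod_{i\in L}|\phi_{X_{N,i}}(t)|=O(m^{-a})$ furnished by (C\ref{cond: Pois 5}) --- bounds $|\psi_S(t)|$ by $O(m^{-a})$ plus a negligible remainder, uniformly in $|t|\ge t_0$. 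Since $a>2$ and $n_0\asymp N^{\alpha}$ with $\alpha\le 1$ force $m^{-a}=o(n_0^{-1/2})$, and $\int_{t_0}^{T_N}|t|^{-1}\,dt=O(\log T_N)$, this range contributes $o(n_0^{-1/2})$; collecting the ranges proves the expansion.

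The main obstacle is this last estimate: one must control the characteristic function of the \emph{quadratic} statistic $\Delta_{N,1}+\Delta_{N,2}$, not merely of its linear part $\Delta_{N,1}$, in the large-frequency regime. The bilinear term $\Delta_{N,2}$ does not factor across the conditioning block, and --- because (C\ref{cond: C4poi}) provides only eighth moments --- Chebyshev tail bounds on $\Delta_{N,2}$ are far too weak to let one simply discard $\Delta_{N,2}$ once $|t|\gtrsim n_0^{1/2}$; controlling the perturbed peeling product and the higher-order remainder $r_2$ in that regime is the technical heart of the proof, and it is precisely to make the single-block comparison go through, with $m^{-a}$ still $o(n_0^{-1/2})$, that (C\ref{cond: Pois 5}) is phrased for subsets of the specific cardinality $\lfloor n_0^{-1/2}N/(\log n_0)\rfloor$.
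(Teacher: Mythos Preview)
Your architecture is the paper's: decompose via Lemma~\ref{lem_03}, apply Esseen's smoothing lemma, use Lemmas~\ref{lem_01}--\ref{lem_02} for small $|t|$, and invoke (C\ref{cond: Pois 5}) over a block of size $m$ for large $|t|$. The reduction from $\Theta_{N,Poi}^{(2,3)}$ to $\tau_{N,Poi}^{(3)}$ and the sandwiching of $\Delta_{N,4}$ are also handled the same way. Where you and the paper diverge is precisely the step you flag as the obstacle, and there your proposed fix does not work.

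Your high-frequency plan is to condition on $\{I_{N,i}\}_{i\notin L}$ and then compare the conditional characteristic function with $\prod_{i\in L}\phi_{X_{N,i}}(t)$ ``through the continuity of $\cos$''. The problem is quantitative. After conditioning, the coefficient of $(I_{N,i}-\pi_{N,i})$ for $i\in L$ is $V_{N,Poi}^{-1/2}y_{N,i}\pi_{N,i}^{-1}+\delta_i$ with $\delta_i=O_p(y_{N,i}\,n_0^{-1})$, while the unperturbed coefficient is of order $y_{N,i}\,n_0^{-1/2}$. For a single factor, $|\phi_{X_{N,i}}(t)-\phi_{X_{N,i}}^{\mathrm{pert}}(t)|=O(|t|\,|\delta_i|)$, which for $|t|\asymp n_0^{1/2}$ is $O_p(n_0^{-1/2})$, \emph{not} $o(1)$; multiplying $m$ such errors against factors bounded only by $1$ gives nothing useful, and for $|t|\gg n_0^{1/2}$ (which you need since you take $T_N/n_0^{1/2}\to\infty$) the situation is worse. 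The within-block quadratic part $\sum_{i,j\in L,\,i\neq j}U_{N,i,j}$ also survives conditioning and has no reason to be negligible at this scale. Separately, your ``companion bound'' $\E\{\Delta_{N,2}^2e^{\iota t\Delta_{N,1}}\mid\mathcal F_N\}=O(n_0^{-1})e^{-ct^2}$, even if true, does not control $\E\{r_2e^{\iota t\Delta_{N,1}}\mid\mathcal F_N\}$: since $r_2=e^{\iota t\Delta_{N,2}}-1-\iota t\Delta_{N,2}$ is not polynomial in the $(I_{N,i}-\pi_{N,i})$, the pairing technique does not apply to it, and the crude bound $|r_2|\le t^2\Delta_{N,2}^2/2$ gives only $O(t^2n_0^{-1})$ with no Gaussian decay, which is $O(1)$ at $|t|\asymp n_0^{1/2}$.

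The paper's device sidesteps all of this. Rather than perturbing, it \emph{subtracts}: set $\Delta_{N,2}(m)=\sum_{i=1}^{m}\sum_{j=i+1}^{N}U_{N,i,j}$, the portion of $\Delta_{N,2}$ that touches any index in the block, and write $W_N=\{W_N-\Delta_{N,2}(m)\}+\Delta_{N,2}(m)$. In $W_N-\Delta_{N,2}(m)$ the indices $1,\dots,m$ appear \emph{only} through $X_{N,1},\dots,X_{N,m}$, so by independence $\bigl|\E\exp[\iota t\{W_N-\Delta_{N,2}(m)\}]\bigr|\le\bigl|\prod_{i=1}^{m}\phi_{X_{N,i}}(t)\bigr|=O(m^{-a})$ directly from (C\ref{cond: Pois 5}), with no perturbation argument. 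The subtracted piece is handled by the two-term Taylor remainder, using $\E\{\Delta_{N,2}(m)^2\mid\mathcal F_N\}=O(n_0^{-1}N^{-1}m)$ --- much smaller than $\E(\Delta_{N,2}^2\mid\mathcal F_N)=O(n_0^{-1})$ --- so that $\int_{n_0^{1/4}(\log n_0)^{-1}\le|t|\le a_\varepsilon n_0^{1/2}}|t|^{-1}t^2n_0^{-1}N^{-1}m\,dt=O(N^{-1}m)=o(n_0^{-1/2})$. The specific size $m=\lfloor n_0^{-1/2}N/(\log n_0)\rfloor$ is chosen to balance these two requirements. Correspondingly, the paper puts the low/high cutoff at $n_0^{1/4}(\log n_0)^{-1}$ (so that the full-$\Delta_{N,2}$ Taylor remainder integrates to $o(n_0^{-1/2})$ on the low range) and takes $T_N=a_\varepsilon n_0^{1/2}$ with $\varepsilon\downarrow0$ rather than $T_N/n_0^{1/2}\to\infty$. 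Replacing your conditioning/peeling sketch with this exact-factorization-after-subtraction argument closes the gap.
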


\begin{proof}
	According to Lemma \ref{lem_03}, $T_{N,Poi}=\Delta_{N,1}+\Delta_{N,2}+\Delta_{N,3}+\Delta_{N,4}$, where $\Delta_{N,1}$, $\Delta_{N,2}$, $\Delta_{N,3}$ are defined in Lemma \ref{lem_03} and $\Delta_{N,4}$ satisfies
	$$ \P_{Poi}\left(|\Delta_{N,4}|\geq n_0^{-1/2}(\log n_0)^{-1}\right)=o(n_0^{-1/2}). $$	
	Thus, it suffices to show that
	$$\sup_{z\in\mathbb{R}}\left|\P_{Poi}(\Delta_{N,1}+\Delta_{N,2}+\Delta_{N,3}\leq z)-F_{E,N}(z)\right|=o(n_0^{-1/2}), $$
	where 
	\begin{eqnarray}
	F_{E,N}(z) & = & \Phi(z)+\left(6^{-1}V_{N,Poi}^{-3/2}\mu_{N,Poi}^{(3)}(1-z^2)+2^{-1}V_{N,Poi}^{-3/2}\tau_{N,Poi}^{(3)}z^2\right)\phi(z). \notag
	\end{eqnarray}
	
	Define
	$$ F_{E,N1}(z)=\Phi(z)+6^{-1}\left(V_{N,Poi}^{-3/2}\mu_{N,Poi}^{(3)}-3V_{N,Poi}^{-3/2}\tau_{N,Poi}^{(3)}\right)(1-z^2)\phi(z). $$
	As $\Delta_{N,3}$ is nonrandom and $\Delta_{N,3}=O(n_0^{-1/2})$, by the fact that $$\sup_{z\in\mathbb{R}}|F_{E,N1}(z-\Delta_{N,3})-F_{E,N}(z)|=o(n_0^{-1/2}),$$ it is enough to prove that
	\begin{eqnarray}\label{theo_eq_01}
	\sup_{z\in\mathbb{R}}\left|\P_{Poi}(\Delta_{N,1}+\Delta_{N,2}\leq z)-F_{E,N1}(z)\right|=o(n_0^{-1/2}).
	\end{eqnarray}
	
	Denote $W_N=\Delta_{N,1}+\Delta_{N,2}$ and let $\phi_{W_N}(t)$ be the characteristic function (c.f.) of $W_N$, that is
	$$ \phi_{W_N}(t)=\E\{\exp(\iota tW_N)\mid \mathcal{F}_N\}=\E\left[\exp\{\iota t(\Delta_{N,1}+\Delta_{N,2})\}\mid \mathcal{F}_N\right]. $$
	In addition, denote
	\begin{eqnarray}
	\notag & & \phi_{E,N1}(t) \\
	\notag & = & \int \exp(\iota tz)dF_{E,N1}(z) \\
	\notag & = & \exp(-t^2/2)\left\{1+6^{-1}(\iota t)^3\left(V_{N,Poi}^{-3/2}\mu_{N,Poi}^{(3)}-3V_{N,Poi}^{-3/2}\tau_{N,Poi}^{(3)}\right)\right\}.
	\end{eqnarray}
	By Esseen's smoothing lemma \citep[Theorem 5.1]{Petrov1995}, for any arbitrary $\varepsilon\in(0,1)$,
	\begin{eqnarray}
	\notag & &  \sup_{z\in\mathbb{R}}\left|\P_{Poi}(W_N\leq z)-F_{E,N1}(z)\right| \\
	\notag & \leq & \frac{1}{\pi}\int_{|t|\leq a_{\varepsilon}n_0^{1/2}}\frac{1}{|t|}|\phi_{W_N}(t)-\phi_{E,N1}(t)|dt+n_0^{-1/2}{\varepsilon},
	\end{eqnarray}
	where $a_{\varepsilon}$ is chosen to satisfy $|dF_{E,N1}(z)/dx|\leq a_{\varepsilon}\varepsilon$. Thus, it suffices to prove
	\begin{eqnarray}
	\frac{1}{\pi}\int_{|t|\leq a_{\varepsilon}n_0^{1/2}}\frac{1}{|t|}|\phi_{W_N}(t)-\phi_{E,N1}(t)|dt = o(n_0^{-1/2}). \notag
	\end{eqnarray}
	
	Recall $\Theta_{N,Poi}^{(2,3)}= \sum_{(i,j)\in\Gamma_N}y_{N,i}^2y_{N,j}^3(1-\pi_{N,i})(1-\pi_{N,j})^2\pi_{N,i}^{-1}\pi_{N,j}^{-2}$, where $\Gamma_N=\{(i,j): i,j=1,\ldots,N \text{~and~} i\neq j\}$, then
	\begin{eqnarray}
	V_{N,Poi}^{-5/2}\Theta_{N,Poi}^{(2,3)}&=& V_{N,Poi}^{-3/2}\tau_{N,Poi}^{(3)}-V_{N,Poi}^{-5/2}\sum_{i=1}^{N}y_{N,i}^5(1-\pi_{N,i})^3\pi_{N,i}^{-3}, \notag
	\end{eqnarray}
	where $V_{N,Poi}^{-3/2}\tau_{N,Poi}^{(3)}=O(n_0^{-1/2})$ and $$V_{N,Poi}^{-5/2}\sum_{i=1}^{N}y_{N,i}^5(1-\pi_{N,i})^3\pi_{N,i}^{-3}=O(n_0^{-1/2}N^{-1}).$$
	
	Denote
	\begin{eqnarray}
	\notag & & \phi_{E,N2}(t) = \exp(-t^2/2)\left\{1+6^{-1}(\iota t)^3\left(V_{N,Poi}^{-3/2}\mu_{N,Poi}^{(3)}-3V_{N,Poi}^{-5/2}\Theta_{N,Poi}^{(2,3)}\right)\right\},
	\end{eqnarray}
	then
	\begin{eqnarray}
	\frac{1}{\pi}\int_{|t|\leq a_{\varepsilon}n_0^{1/2}}\frac{1}{|t|}|\phi_{E,N1}(t)-\phi_{E,N2}(t)|dt = o(n_0^{-1/2}). \notag
	\end{eqnarray}
	So it is sufficient to show that
	\begin{eqnarray}\label{theo_eq_02}
	\frac{1}{\pi}\int_{|t|\leq a_{\varepsilon}n_0^{1/2}}\frac{1}{|t|}|\phi_{W_N}(t)-\phi_{E,N2}(t)|dt = o(n_0^{-1/2}).
	\end{eqnarray}
	
	A simple calculation yields $\E(\Delta_{N,1}\mid \mathcal{F}_N)=\E(\Delta_{N,2}\mid \mathcal{F}_N)=0$, $\E(\Delta_{N,1}^2\mid \mathcal{F}_N)=1$ and
	\begin{eqnarray}
	\notag & & \E(\Delta_{N,2}^2\mid \mathcal{F}_N) \\
	\notag & = & 4^{-1}V_{N,Poi}^{-3}\sum_{(i,j)\in\Gamma_N}y_{N,i}^2y_{N,j}^4(1-\pi_{N,i})(1-\pi_{N,j})^3\pi_{N,i}^{-1}\pi_{N,j}^{-3}=O(n_0^{-1})
	\end{eqnarray}
	This implies that for $|t|\leq b_Nn_0^{1/2}$ where $b_N\to 0$ as $N\to\infty$, $|t\Delta_{N,2}|=o_p(1)$. By the inequality that $|\exp(\iota x)-1-\iota x|\leq|x|^2/2$ for any real number $x$, we write
	\begin{eqnarray}
	\notag \phi_{W_N}(t) & = & \E\{\exp(\iota tW_N)\mid \mathcal{F}_N\}=\E\left[\exp\{\iota t(\Delta_{N,1}+\Delta_{N,2})\}\mid \mathcal{F}_N\right] \\
	\notag & = & \E\left[\exp(\iota t\Delta_{N,1})\left\{1+\iota t\Delta_{N,2}+O_p(|t\Delta_{N,2}|^2)\right\}\mid \mathcal{F}_N\right] \\
	\notag & = & \E\{\exp(\iota t\Delta_{N,1})\mid \mathcal{F}_N\}+\iota t\E\{\Delta_{N,2}\exp(\iota t\Delta_{N,1})\mid \mathcal{F}_N\} \\
	\notag & & ~~~~~~~~~~~~~~~~~~~~~~~~~~~~+O(\E\{|t\Delta_{N,2}|^2\mid \mathcal{F}_N\}).
	\end{eqnarray}
	
	According to Lemma \ref{lem_01} and Lemma \ref{lem_02},
	\begin{eqnarray}
	\notag & & \left|\phi_{\Delta_{N,1}}(t)-\exp(-t^2/2)-6^{-1}(\iota t)^3V_{N,Poi}^{-3/2}\mu_{N,Poi}^{(3)}\exp(-t^2/2)\right| \\
	\notag & \leq & C_{2,1}\exp(-19t^2/48)\left(t^4n_0^{-1}+t^6n_0^{-1}\right)
	\end{eqnarray}
	and
	\begin{eqnarray}
	\notag & & \E\left\{\Delta_{N,2}\exp(\iota t\Delta_{N,1})\mid \mathcal{F}_N\right\} \\
	\notag & = & 2^{-1}t^2\exp(-t^2/2)V_{N,Poi}^{-5/2}\Theta_{N,Poi}^{(2,3)}+\varpi_{N,1}(t),
	\end{eqnarray}
	for all $|t|\leq\min\left(\big\{\max_{1\leq i\leq N}\E(X_{N,i}^2\mid \mathcal{F}_N)\big\}^{-1/2},V_{N,Poi}^{3/2}/\left(4\nu_{N,Poi}^{(3)}\right)\right)$, where $\varpi_{N,1}(t)$ satisfies
	\begin{eqnarray}
	\notag& & |\varpi_{N,1}(t)| \\
	\notag& \leq & C_{3,1}\exp\left\{-t^2/2+2|t|^3V_{N,Poi}^{-3/2}\nu_{N,Poi}^{(3)}/3+t^2\max_{1\leq i\leq N}\E(X_{N,i}^2\mid \mathcal{F}_N)\right\} \\
	\notag& & ~~~~~~~~~~~~~~~~~~~~~~~~~~~~~~~~~~~~~~~\times \left(|t|^3n_0^{-1}+t^4n_0^{-3/2}+|t|^5n_0^{-1}\right).
	\end{eqnarray}
	Recall that $\E(\Delta_{N,2}^2\mid \mathcal{F}_N)=O(n_0^{-1})$, it can be easily verified that
	\begin{align*}
	\phi_{W_N}(t) = \phi_{E,N2}(t)+\varpi_N(t)
	\end{align*}
	for $|t|\leq \min\left(b_Nn_0^{1/2}, \big\{\max_{1\leq i\leq N}\E(X_{N,i}^2\mid \mathcal{F}_N)\big\}^{-1/2},V_{N,Poi}^{3/2}/\left(4\nu_{N,Poi}^{(3)}\right)\right)$, where
	\begin{eqnarray}
	\notag|\varpi_N(t)| & \leq & C_{4,1}\bigg[t^2n_0^{-1}+\exp(-19t^2/48)\left(t^4n_0^{-1}+t^6n_0^{-1}\right)\\
	\notag& & + \exp\left\{-t^2/2+2|t|^3V_{N,Poi}^{-3/2}\nu_{N,Poi}^{(3)}/3+t^2\max_{1\leq i\leq N}\E(X_{N,i}^2\mid \mathcal{F}_N)\right\} \\
	\notag& & ~~~~~~~~~\times\left(t^4n_0^{-1}+|t|^5n_0^{-3/2}+t^6n_0^{-1}\right)\bigg]
	\end{eqnarray}
	and $C_{4,1}$ is a positive constant.
	
	Under the assumption that $\lim_{N\to\infty}N^{-1}\sum_{i=1}^{N}y_{N,i}^8=C_3$ for a positive constant $C_3$, we have $\max_{1\leq i\leq N}|y_{N,i}|=O(N^{1/8})$. Then, $\max_{1\leq i\leq N}\E(X_{N,i}^2\mid \mathcal{F}_N)=O(N^{-3/4})$. It follows that for $|t|\leq n_0^{1/4}(\log n_0)^{-1}$,
	\begin{align*}
	& \frac{1}{\pi}\int_{|t|\leq n_0^{1/4}(\log n_0)^{-1}}\frac{1}{|t|}|\phi_{W_N}(t)-\phi_{E,N2}(t)|dt \\
	\leq & \frac{1}{\pi}\int_{|t|\leq n_0^{1/4}(\log n_0)^{-1} }\frac{1}{|t|}|\varpi_N(t)|dt = o(n_0^{-1/2}).
	\end{align*}
	
	It is obvious that
	$$ \frac{1}{\pi}\int_{n_0^{1/4}(\log n_0)^{-1}\leq|t|\leq a_{\varepsilon}n_0^{1/2}}\frac{1}{|t|}|\phi_{E,N2}(t)|dt = o(n_0^{-1/2}), $$
	it remains to establish
	\begin{align}\label{theo_eq_03}
	\frac{1}{\pi}\int_{n_0^{1/4}(\log n_0)^{-1}\leq|t|\leq a_{\varepsilon}n_0^{1/2}}\frac{1}{|t|}|\phi_{W_N}(t)|dt = o(n_0^{-1/2}).
	\end{align}
	
	Denote
	\begin{eqnarray}
	\notag& & U_{N,i,j}=-2^{-1}V_{N,Poi}^{-3/2}\{y_{N,i}(1-\pi_{N,i})\pi_{N,i}^{-1}+y_{N,j}(1-\pi_{N,j})\pi_{N,j}^{-1}\} \\
	\notag& & ~~~~~~~~~~~~~~~~~~~~~~~~~~~~\times y_{N,i}y_{N,j}\pi_{N,i}^{-1}\pi_{N,j}^{-1}(I_{N,i}-\pi_{N,i})(I_{N,j}-\pi_{N,j}),
	\end{eqnarray}
	then $\Delta_{N,2}=\sum_{1\leq i<j\leq N}U_{N,i,j}$ and $W_{N}=\sum_{i=1}^{N}X_{N,i}+\sum_{1\leq i<j\leq N}U_{N,i,j}$.
	
	Take $m=\lfloor n_0^{-1/2}N/(\log n_0)\rfloor$. According to Lemma \ref{lem_04}, we assume that $m^{-1}\sum_{i=1}^{m}y_{N,i}^4=O(1)$ for sufficient large $N$ without loss of generality. Define
	$$ \Delta_{N,2}(m)=\sum_{i=1}^{m}\sum_{j=i+1}^{N}U_{N,i,j}. $$
	By simple algebra,
	\begin{eqnarray}
	\notag& & \E\left[\{\Delta_{N,2}(m)\}^2\mid \mathcal{F}_N\right] \\
	\notag& = & \E\left\{\left(\sum_{i=1}^{m}\sum_{j=i+1}^{N}U_{N,i,j}\right)^2\mid \mathcal{F}_N\right\} \\
	\notag& = & 4^{-1}V_{N,Poi}^{-3}\sum_{i=1}^{m}\sum_{j=i+1}^{N}\{y_{N,i}(1-\pi_{N,i})\pi_{N,i}^{-1}+y_{N,j}(1-\pi_{N,j})\pi_{N,j}^{-1}\}^2 \\
	\notag& & ~~~~~~~~~~~~~~~~~~~~~~~~~~~~~~~\times y_{N,i}^2y_{N,j}^2(1-\pi_{N,i})(1-\pi_{N,j})\pi_{N,i}^{-1}\pi_{N,j}^{-1} \\
	\notag& = & O\left(n_0^{-1}N^{-2}\sum_{i=1}^{m}\sum_{j=i+1}^{N}(y_{N,i}^4y_{N,j}^2+y_{N,i}^2y_{N,j}^4)\right)=O\left(n_0^{-1}N^{-1}m\right).
	\end{eqnarray}
	By the inequality that $|\exp(\iota x)-1-\iota x|\leq 2^{-1}|x|^2$ for all real $x$, we have
	\begin{eqnarray}
	\notag& & \big|\E\{\exp(\iota t W_{N})\mid \mathcal{F}_N\}-\E(\exp[\iota t\{W_{N}-\Delta_{N,2}(m)\}]\mid \mathcal{F}_N) \\
	\notag& & ~~~~~~~~~~~~~~~~~~~~~~-\iota t\E(\Delta_{N,2}(m)\exp[\iota t\{W_{N}-\Delta_{N,2}(m)\}]\mid \mathcal{F}_N)\big| \\
	\notag&\leq & \E\left(\left|\exp[\iota t\{W_{N}-\Delta_{N,2}(m)\}]\right|\left|\exp\{\iota t \Delta_{N,2}(m)\}-1-\iota t\Delta_{N,2}(m)\right|\mid \mathcal{F}_N\right) \\
	\notag&\leq & 2^{-1}\E\{|\iota t\Delta_{N,2}(m)|^2\mid \mathcal{F}_N\} \\
	\notag&\leq & C_{4,2}t^2n_0^{-1}N^{-1}m,
	\end{eqnarray}
	where $C_{4,2}$ is a positive constant. This clearly indicates that
	\begin{eqnarray}
	\notag\left|\E\{\exp(\iota t W_{N})\mid \mathcal{F}_N\}\right| & \leq & \left|\E(\exp[\iota t\{W_{N}-\Delta_{N,2}(m)\}])\mid \mathcal{F}_N\right| \\
	\notag& & +|t|\left|\E(\Delta_{N,2}(m)\exp[\iota t\{W_{N}-\Delta_{N,2}(m)\}]\mid \mathcal{F}_N)\right| \\
	\notag& & +C_{4,2}t^2n_0^{-1}N^{-1}m.
	\end{eqnarray}
	In view of the fact that $X_{N,1},\ldots,X_{N,m}$ are the only terms in $W_{N}-\Delta_{N,2}(m)$ that depend on $I_{N,1},\ldots,I_{N,m}$, for a positive constant $C_{4,3}$,
	\begin{align*}
	& \left|\E(\exp[\iota t\{W_{N}-\Delta_{N,2}(m)\}])\mid \mathcal{F}_N\right| \\
	= & \left|\E\left(\exp\left[\iota t\left\{\sum_{i=1}^{m}X_{N,i}+\sum_{i=m+1}^{N}X_{N,i}+\Delta_{N,2}-\Delta_{N,2}(m)\right\}\right]\mid \mathcal{F}_N\right)\right| \\
	\leq & \left|\prod_{i=1}^{m}\E\{\exp(\iota tX_{N,i})\mid \mathcal{F}_N\}\right|\left|\E\left(\exp\left[\iota t\left\{\sum_{i=m+1}^{N}X_{N,i}+\Delta_{N,2}-\Delta_{N,2}(m)\right\}\right]\mid \mathcal{F}_N\right)\right| \\
	\leq & \left|\prod_{i=1}^{m}\E\{\exp(\iota tX_{N,i})\mid \mathcal{F}_N\}\right| \leq C_{4,3}m^{-a}
	\end{align*}  by condition (C\ref{cond: extra poi}).
	In addition, there exists a constant $C_{4,4}>0$ such that
	\begin{eqnarray}
	\notag& & \left|\E(\Delta_{N,2}(m)\exp[\iota t\{W_{N}-\Delta_{N,2}(m)\}]\mid \mathcal{F}_N)\right| \\
	\notag& = & \left|\E\left(\sum_{i=1}^{m}\sum_{j=i+1}^{N}U_{N,i,j}\exp\left[\iota t\left\{\sum_{\ell=1}^{N}X_{N,\ell}+\Delta_{N,2}-\Delta_{N,2}(m)\right\}\right]\mid \mathcal{F}_N\right)\right| \\
	\notag& \leq & \sum_{i=1}^{m}\sum_{j=i+1}^{N}\left|\prod_{\substack{\ell=1,\ldots,m \\ \ell\neq i,j}}\E\{\exp(\iota tX_{N,\ell})\mid \mathcal{F}_N\}\right| \\
	\notag& & \times\bigg|\E\left(U_{N,i,j}\exp\left[\iota t\left\{X_{N,i}+X_{N,j}+\Delta_{N,2}-\Delta_{N,2}(m)\right\}\right]\mid \mathcal{F}_N\right)\bigg| \\
	\notag& \leq & C_{4,4}(m-2)^{-a}n_0^{1/2}N^{-1}m.
	\end{eqnarray}
	%Finally,
	%\begin{align*}
	%	\left|\E\exp(\iota t W_{N})\right| \leq & C_{4,3}m^{-a} +C_{4,4}(m-2)^{-a}n_0^{1/2}N^{-1}m|t|+  C_{4,2}t^2n_0^{-1}N^{-1}m \\
	%\leq & C_{4,5}\left(m^{-a}+|t|m^{1-a}n_0^{1/2}N^{-1}+t^2n_0^{-1}N^{-1}m\right)
	%\end{align*} for any sufficient large $N$.
	
	Finally, for a positive constant $C_{4,5}$,
	\begin{align*}
	& \frac{1}{\pi}\int_{n_0^{1/4}(\log n_0)^{-1}\leq|t|\leq a_{\varepsilon}n_0^{1/2}}\frac{1}{|t|}|\phi_{W_N}(t)|dt \\ \leq &  \frac{C_{4,5}}{\pi}\int_{n_0^{1/4}(\log n_0)^{-1}\leq|t|\leq a_{\varepsilon}n_0^{1/2}}\frac{1}{|t|}(m^{-a}+|t|m^{1-a}n_0^{1/2}N^{-1}+t^2n_0^{-1}N^{-1}m)dt \\
	= & o(n_0^{-1/2}),
	\end{align*}
	as $a>2$. Therefore, we finish the proof of this lemma.
	%\textcolor{red}{Thus, by choosing $m$ satisfies that
	%$$ m^{-a}=o(n_0^{-1/2}(\log n_0)^{-1}), ~~m^{1-a}=o(n_0^{-3/2}N) \text{~~and~~} m=o(n_0^{-1/2}N), $$ 
	%we can finish the proof of this theorem, and this requires $a>2$ if we take $m=[n_0^{-1/2}N/(\log n_0)]$, the integer part of $n_0^{-1/2}N/(\log n_0)$.
\end{proof}

\begin{proof}[Proof of Theorem \ref{theorem: edgeworth expansion Poisson}]	
	According to Lemma \ref{lemma: almost sure variance poi}, $\mu_{N,Poi}^{(3)} = O(n_0^{-2}N^{3})$. This cooperates with (C\ref{con: variance}) indicates that
	\begin{equation}
	\frac{\mu_{N,Poi}^{(3)}}{{V}_{N,Poi}^{3/2}} = O(n_0^{-1/2}). \label{eq: theorem e 03}
	\end{equation}
	In addition, by \eqref{eq: lemma1 2} and \eqref{eq: lemma1 1.2} of Lemma \ref{lemma: almost sure variance poi}, we can prove that $$\hat{V}_{N,Poi}^{-3/2}\hat{\mu}_{N,Poi}^{(3)} =O_p(n_0^{-1/2}).$$
	Similarly, we can show that $\hat{V}_{N,Poi}^{-3/2}\hat{\tau}_{N,Poi}^{(3)} =O_p(n_0^{-1/2})$ according to Lemma \ref{lemma: almost sure variance poi}. Finally, by (C\ref{con: variance}) and Lemma \ref{lemma: almost sure variance poi},
	$$ \hat{V}_{N,Poi}^{-3/2}\hat{\mu}_{N,Poi}^{(3)}-V_{N,Poi}^{-3/2}\mu_{N,Poi}^{(3)}=o_p(n_0^{-1/2}) $$
	and
	$$ \hat{V}_{N,Poi}^{-3/2}\hat{\tau}_{N,Poi}^{(3)}-V_{N,Poi}^{-3/2}\tau_{N,Poi}^{(3)}=o_p(n_0^{-1/2}). $$
	Combine these results with Lemma \ref{lemma: 05} , we have proved Theorem \ref{theorem: edgeworth expansion Poisson}.

\end{proof}

The proof of Theorem \ref{theorem: e 2} uses the following lemma.

\begin{lemma} \label{lemma_06}
	Let $(N_1^*,\ldots,N_n^*)$ be a multinomial random vector with distribution $\mathrm{MN}(N;\rho)$, where $\rho=(\rho_1,\ldots,\rho_n)$ and $\rho_i=\pi_{N,i}^{-1}/\left(\sum_{j=1}^n\pi_{N,j}^{-1}\right)$ for $i=1,\ldots,n$. Denote $\mathcal{F}_N^*=\{y_{N,1}^*, \ldots, y_{N,N}^*\}$ to be the bootstrap finite population generated from the realized sample $\{y_{N,1}, \ldots, y_{N,n}\}$ and the random vector $(N_1^*,\ldots,N_n^*)$ with each $N_i^*$ indicates the number of replicates of $y_{N,i}$ in $\mathcal{F}_N^*$. Define $n_1=\sum_{i=1}^{n}\mathbb{I}(N_i^*\geq 1)$ as the number of distinct $y_{N,i}$, $i=1,\ldots, n$ in $\mathcal{F}_N^*$. Then, as $N\to\infty$, 
	\begin{eqnarray}
	\mathbb{P}_{*}\left(n_1\geq \min\{n_0, m\}\right)\to 1,
	\end{eqnarray}
	where $m=\lfloor n_0^{-1/2}N/(\log n_0) \rfloor$ is the integer part of $n_0^{-1/2}N/(\log n_0)$ and $\mathbb{P}_{*}$ is the probability measure for the first step of the proposed bootstrap method conditional on the realized sample $\{y_{N,1},\ldots,y_{N,n}\}$.
\end{lemma}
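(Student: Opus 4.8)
The quantity $n_1$ is the number of occupied cells in a classical occupancy scheme: conditionally on the realized sample $\{y_{N,1},\dots,y_{N,n}\}$ we throw $N$ balls independently into $n$ cells, cell $i$ receiving each ball with probability $\rho_i$, so that $N_i^*\sim\mathrm{Bin}(N,\rho_i)$ marginally and $n_1=\sum_{i=1}^{n}\mathbb{I}(N_i^*\ge1)=n-Z$, where $Z$ is the number of empty cells. The first step is to record that the cell probabilities are essentially uniform: by (C\ref{cond: C1poi}) every $\pi_{N,i}^{-1}$ lies in $[N/(C_2n_0),\,N/(C_1n_0)]$, so $\sum_{j=1}^{n}\pi_{N,j}^{-1}\asymp nN/n_0$ and $\rho_i\asymp 1/n$ uniformly in $i$. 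Combining this with $n=n_0\{1+o(1)\}$ (a routine consequence of $\E(n\mid\mathcal F_N)=n_0$ and moment bounds for the sum of independent Bernoulli variables $n$, which lets us work on an event of $\P_{Poi}$--probability tending to one on which $\tfrac12n_0\le n\le 2n_0$) gives $\rho_iN\asymp N/n_0\asymp N^{1-\alpha}$.

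The core estimate bounds $\E_*(Z)$. Since $\P_*(N_i^*=0)=(1-\rho_i)^{N}\le\exp(-\rho_iN)$, we have $\E_*(Z)=\sum_{i=1}^{n}(1-\rho_i)^{N}\le n\exp(-c_0N/n_0)$ for a constant $c_0>0$. If $\alpha<1$, then $N/n_0\to\infty$, so $\E_*(Z)\le N\exp(-c_0N^{1-\alpha})\to0$ faster than any power of $N$; Markov's inequality yields $\P_*(Z\ge1)\to0$, i.e.\ $\P_*(n_1=n)\to1$. If $\alpha=1$, then $N/n_0\asymp1$ and a fixed positive fraction of cells stays empty, so Markov alone is too weak; instead I would use a second moment bound. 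From $\P_*(N_i^*=0,N_j^*=0)=(1-\rho_i-\rho_j)^{N}\le(1-\rho_i)^{N}(1-\rho_j)^{N}$ (equivalently, negative association of the multinomial coordinates) the off--diagonal covariances of the indicators $\mathbb{I}(N_i^*=0)$ are nonpositive, whence $\mathrm{Var}_*(n_1)=\mathrm{Var}_*(Z)\le\E_*(Z)\le n$. Since $\exp(-c_0N/n_0)\le1-\delta_0$ for a constant $\delta_0>0$ in this regime, $\E_*(n_1)=n-\E_*(Z)\ge\delta_0n$, and Chebyshev's inequality gives $\P_*(n_1\le\tfrac12\delta_0n)\le 4\delta_0^{-2}n^{-1}\to0$. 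In either case $n_1$ exceeds $\min\{n_0,m\}$ with $\P_*$--probability tending to one, using that $n=n_0\{1+o(1)\}$, that $m\le n_0$, and that $m=o(n)$ when $\alpha=1$.

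The main obstacle is precisely the large sampling--fraction regime $n_0\asymp N$ ($\alpha=1$): there not all cells are occupied, the first--moment (Markov) bound is insufficient, and it must be paired with the negative--association / variance estimate above, exploiting that the threshold $\min\{n_0,m\}=m$ is then of strictly smaller order than the sample size $n$. A minor bookkeeping point is that the occupancy argument naturally controls $n_1$ relative to the realized $n$ rather than $n_0$; since $n=n_0\{1+o(1)\}$ this affects the threshold only in lower order and is harmless wherever Lemma~\ref{lemma_06} is used.
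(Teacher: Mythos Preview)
Your argument is correct, and in the hard regime $\alpha=1$ it is genuinely different from (and cleaner than) the paper's. The paper splits on whether $m\ge n_0$ or $m<n_0$; in the first case it does exactly your union--bound / Markov step $\P_*(n_1\ne n)\le n(1-C_1C_2^{-1}n^{-1})^{N}\to0$. When $m<n_0$ and $\alpha<1$ it again uses the same union bound but over only the first $m$ cells, whereas you simply note that $\E_*(Z)\to0$ already forces $n_1=n$, which dominates $m$ since $m=o(n_0)$ in that sub-regime. The substantive divergence is at $\alpha=1$: the paper bounds $\P_*(n_1<m)$ by summing over all ways to confine the $N$ balls to at most $n-i$ cells for $i\ge n-m$ and then applies Stirling's formula to control $\binom{n}{n-i}\{(n-i)C_1^{-1}C_2/n\}^{N}$, while you use the occupancy second-moment route via negative association of the empty-cell indicators and Chebyshev. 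Your approach is shorter and more transparent; the paper's gives an explicit exponential-type bound but at the cost of heavier computation.

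One shared loose end, which you already flag: when $\min\{n_0,m\}=n_0$ (i.e.\ $\alpha\le 2/3$), showing $n_1=n$ does not literally yield $n_1\ge n_0$ on the event $\{n<n_0\}$. The paper handles this the same way you do, by invoking $n/n_0\to1$ and treating the discrepancy as harmless for the downstream application. This is acceptable at the level of rigor of the paper, but if you want the lemma as stated you could either (i) replace $n_0$ by $n$ in the statement, or (ii) intersect with the event $\{n\ge n_0\}$ and observe that $\P_{Poi}(n<n_0)$ does not obstruct the conclusion since the relevant consequence (existence of $\min\{n_0,m\}$ distinct bootstrap values) only needs $n_1\ge m$ when $m\le n_0$ and $n_1=n$ when $m>n_0$.
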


\begin{proof}[Proof of Lemma \ref{lemma_06}]
	We prove this lemma under two different case scenarios: $m\geq n_0$ and $m<n_0$.
	
	First, consider the case of $m\geq n_0$, so we have that $n_0=o(N^{2/3})$. By the strong law of large numbers, $nn_0^{-1}=\sum_{i=1}^NI_{N,i}(\sum_{i=1}^{N}\pi_{N,i})^{-1} \to 1$ with probability $1$. It suffices to show that
	\begin{eqnarray}\label{eq:lemma_06_00}
	\mathbb{P}_{*}\left(n_1\neq n\right)\to 0.
	\end{eqnarray}	
	Mentioned that
	\begin{eqnarray}
	\mathbb{P}_{*}\left(n_1\neq n\right) & = & \mathbb{P}_{*}\left(\sum_{i=1}^{n}\mathbb{I}(N_i^*\geq 1)< n\right) \notag \\
	& = & \mathbb{P}_{*}\left(\bigcup_{i=1}^n\left\{N_i^*=0\right\}\right) \notag \\
	& \leq & \sum_{i=1}^{n}\mathbb{P}_{*}\left(N_i^*=0\right) \notag \\
	& \leq & \sum_{i=1}^{n}(1-\rho_i)^N \notag \\
	& \leq & n(1-C_1C_2^{-1}n^{-1})^N \notag \\
	& \to & 0, \notag
	\end{eqnarray}
	where the last inequality is due to that $C_1C_2^{-1}n^{-1}\leq \rho_i\leq C_1^{-1}C_2n^{-1}$ for $i=1,\ldots,n$.
	
	Next, consider the case of $m<n_0$ and it is sufficient to prove that
	\begin{eqnarray}\label{eq:lemma_06_01}
	\mathbb{P}_{*}\left(n_1\geq m\right)\to 1.
	\end{eqnarray}
	When $\alpha<1$, $n_0=o(N)$ and $m=o(N^{2/3})$. Thus,
	\begin{eqnarray}
	\mathbb{P}_{*}\left(n_1<m\right) & = & \mathbb{P}_{*}\left(\sum_{i=1}^{n}\mathbb{I}(N_i^*\geq 1)< m\right) \notag \\
	& \leq & \mathbb{P}_{*}\left(\sum_{i=1}^{m}\mathbb{I}(N_i^*\geq 1)< m\right) \notag \\
	& = & \mathbb{P}_{*}\left(\bigcup_{i=1}^m\left\{N_i^*=0\right\}\right) \notag \\
	& \leq & \sum_{i=1}^{m}\mathbb{P}_{*}\left(N_i^*=0\right) \notag \\
	& \leq & \sum_{i=1}^{m}(1-\rho_i)^N \notag \\
	& \leq & m(1-C_1C_2^{-1}n^{-1})^N \notag \\
	& \to & 0, \notag
	\end{eqnarray}
	which implies \eqref{eq:lemma_06_01} immediately.
	
	When $\alpha=1$, $n_0=O(N)$ and $m=o(N^{1/2})$. Using Stirling's formula, there exists a positive constant $C_{5,1}$ such that
	\begin{eqnarray}
	\notag & & \mathbb{P}_{*}(n_1<m) \\
	\notag & \leq & \sum_{i=n-m}^{n}\mathbb{P}_{*}(n_1=n-i) \\
	\notag & \leq & \sum_{i=n-m}^{n}\frac{n!}{(n-i)!i!}\left\{(n-i)C_1^{-1}C_2n^{-1}\right\}^{N} \\
	\notag & \leq & C_{5,1}\sum_{i=n-m}^{n}\left\{\frac{n}{2\pi i(n-i)}\right\}^{1/2}\left(\frac{n-i}{i}\right)^i\left(\frac{n}{n-i}\right)^n\left\{\frac{C_2(n-i)}{C_1n}\right\}^n,
	\end{eqnarray}
	where the second inequality uses the fact that $C_1C_2^{-1}n^{-1}\leq \rho_i\leq C_1^{-1}C_2n^{-1}$.
	Mentioned that $m=o(N^{1/2})$ and $n=O(N)$, we have
	\begin{eqnarray}
	\notag & & \left\{\frac{n}{2\pi i(n-i)}\right\}^{1/2} = O(1), \\
	\notag & & \left(\frac{n-i}{i}\right)^i \leq \left(\frac{m}{n-m}\right)^i=o(N^{-i/2}) \\
	\notag & & \left(\frac{n}{n-i}\right)^n\left\{\frac{C_2(n-i)}{C_1n}\right\}^n = \left(\frac{C_2}{C_1}\right)^n.
	\end{eqnarray}
	for $i=n-m,\ldots,n$. Thus,
	\begin{eqnarray}
	\notag & & \left\{\frac{n}{2\pi i(n-i)}\right\}^{1/2}\left(\frac{n-i}{i}\right)^i\left(\frac{n}{n-i}\right)^n\left\{\frac{C_2(n-i)}{C_1n}\right\}^n \\
	\notag & = & o\left(N^{-i/2}\left(C_1^{-1}C_2\right)^n\right).
	\end{eqnarray}
	
	Finally,
	\begin{eqnarray}
	\notag & & \mathbb{P}_{*}(n_1<m) \\
	\notag & = & o\left(\sum_{i=n-m}^{n}N^{-i/2}\left(C_1^{-1}C_2\right)^n\right) \\
	\notag & = & o\left(mN^{-(n-m)/2}\left(C_1^{-1}C_2\right)^n\right) \\
	\notag & = &o(\exp\left\{-(n-m)\log N/2+n\log(C_1^{-1}C_2)+\log m\right\}) \\
	\notag & \to & 0,
	\end{eqnarray}
	which finalize the proof of this lemma.
\end{proof}

\begin{proof}[Proof of Theorem \ref{theorem: e 2}]
	%Without loss of generality, we use $I_{N,i}$ explicitly to denote the sample in this proof. 
	
	We first show 
	\begin{equation}
	N^{-1}\sum_{i=1}^N(\pi_{N,i}^{-1}I_{N,i} - 1) \to 0 \label{eq:proof_theo_3.2_01}
	\end{equation}	
	almost surely $(\mathbb{P}_{Poi})$.	
	Denote $D_N^{(2)}$ to be the event $\big\{N^{-1}\lvert\sum_{i=1}^N(\pi_{N,i}^{-1}I_{N,i} - 1)\lvert >\epsilon\big\}$, where $\epsilon$ is a fixed positive number. Similar to the argument used in proving Lemma \ref{lemma: almost sure variance poi}, we have
	\begin{eqnarray}
	& & \mathbb{P}_{Poi}\left(D_N^{(2)}\right) \notag \\
	&\leq&  \epsilon^{-4}N^{-4}E\left\{\left\lvert\sum_{i=1}^N(\pi_{N,i}^{-1}I_{N,i} - 1)\right\rvert^4\mid \mathcal{F}_N\right\}\notag \\ 
	&=& \epsilon^{-4}N^{-4}\left[\sum_{i=1}^N (1-\pi_{N,i})\{(1-\pi_{N,i})^3\pi_{N,i}^{-3}+1\}\right.\notag\\
	&&~~~~~~~~~~~~~~~~~~~~~~~~~~\left. +\sum_{(i,j)\in\Gamma_N}(1-\pi_{N,i})(1-\pi_{N,j})\pi_{N,i}^{-1}\pi_{N,j}^{-1}\right]\notag \\
	&\leq& C_{6,1}\epsilon^{-4} (n_0^{-3} + n_0^{-2}),\notag
	\end{eqnarray}
	where $C_{6,1}$ is a positive  constant with respect to $N$ and $\Gamma_N=\{(i,j): i,j=1,\ldots,N \text{~and~} i\neq j\}$. This immediately implies
	$$
	\sum_{N=1}^\infty\mathbb{P}_{Poi}(D_N^{(2)})<\infty,
	$$
	for any arbitrary positive $\epsilon$ and we have proved \eqref{eq:proof_theo_3.2_01} by the Borel-Cantelli Lemma.
	
	Next, for any $0\leq\delta\leq8$,
	\begin{eqnarray}
	E\left(n_0^{-1}\sum_{i=1}^N|y_{N,i}|^{\delta}I_{N,i}\mid \mathcal{F}_N\right) &=& %n_0^{-1} \sum_{i=1}^NE(I_{N,i}\mid\mathcal{F}_N)y_{N,i}^8\notag \\ 
	%&=&
	n_0^{-1}\sum_{i=1}^N|y_{N,i}|^{\delta}\pi_{N,i}\notag \\ 
	&\leq& C_2N^{-1}\sum_{i=1}^N|y_{N,i}|^{\delta} \notag \\
	&<&\infty, \label{eq:proof_theo_3.2_02}
	\end{eqnarray}	
	where the first inequality holds by (C\ref{cond: C1poi}) and the last inequality holds by (C\ref{cond: C4poi}). Thus, by \eqref{eq:proof_theo_3.2_02} and the Markov's inequality, we have 
	\begin{equation}
	n_0^{-1}\sum_{i=1}^Ny_{N,i}^{\delta}I_{N,i} = O_p(1) \label{eq:proof_theo_3.2_03}
	\end{equation}
	for $0\leq\delta\leq8$. In addition, as 
	\begin{eqnarray}
	E\left(n_0^{-1}\sum_{i=1}^NI_{N,i}\mid\mathcal{F}_N\right)&=&1,\notag\\
	\mathrm{var}\left(n_0^{-1}\sum_{i=1}^NI_{N,i}\mid\mathcal{F}_N\right)&\leq&  n_0^{-2}\sum_{i=1}^N\pi_{N,i}\notag\\
	&\leq& C_2n_0^{-1}\label{eq:proof_theo_3.2_03.2}, \notag
	\end{eqnarray}
	we have
	\begin{equation}
	n_0^{-1}\sum_{i=1}^NI_{N,i} = 1+o_p(1).\label{eq:proof_theo_3.2_04}
	\end{equation}
	
	In the first step of our proposed bootstrap method, $y_{N,1}^*,\ldots,y_{N,N}^*$ are independently and identically distributed (i.i.d.) with $\mathbb{P}_*(y_{N,i}^*=y_{N,j})=\rho_{N,j}=\pi_{N,j}^{-1}\big(\sum_{\ell=1}^n\pi_{N,\ell}^{-1}\big)^{-1}$. Mentioned that, for a positive constant $C_{6,2}$,
	\begin{eqnarray}
	\notag & & E_*\left\{N^{-1}\sum_{i=1}^N(y_{N,i}^*)^8\right\} \\
	&=& \sum_{i=1}^n\pi_{N,i}^{-1}\left(\sum_{j=1}^n\pi_{N,j}^{-1}\right)^{-1}y_{N,i}^8 \notag \\ 
	&=& \left(N^{-1}\sum_{i=1}^N\pi_{N,i}^{-1}I_{N,i}\right)^{-1}\sum_{i=1}^{N}(N^{-1}\pi_{N,i}^{-1})y_{N,i}^8I_{N,i} \notag\\
	&\leq& C_{6,2}n_0^{-1}\sum_{i=1}^Ny_{N,i}^8I_{N,i} \notag \\ &=&O_p(1), \label{eq:proof_theo_3.2_05}
	\end{eqnarray}
	where the first equality holds by the property of the proposed bootstrap method, the inequality holds by \eqref{eq:proof_theo_3.2_01} and (C\ref{cond: C1poi}),  and the last equality holds by \eqref{eq:proof_theo_3.2_03}. Thus, by \eqref{eq:proof_theo_3.2_05} and Markov's inequality, we have 
	\begin{equation}
	N^{-1}\sum_{i=1}^N(y_{N,i}^*)^8 = O_p(1).\label{eq:proof_theo_3.2_06}
	\end{equation}
	Similarly, we can prove that for any subset of the bootstrap finite population, say, $\{y_{N,\ell_1}^*,\ldots,y_{N,\ell_{m_0}}^*\}\subset\{y_{N,1}^*,\ldots,y_{N,N}^*\}$ and all $0\leq\delta\leq8$,
	\begin{equation}
	m_0^{-1}\sum_{i=1}^{m_0}(y_{N,\ell_i}^*)^{\delta} = O_p(1),\label{eq:proof_theo_3.2_06_1}
	\end{equation}
	here $m_0$ can be any positive integer less than $N$.
	
	Denote ${V}_{N,Poi}^* = \sum_{i=1}^{N}E_{**}(\hat{Y}_{N,Poi}^{*}-Y_{N}^{*})^2=\sum_{i=1}^N(y_{N,i}^*)^2(1-\pi_{N,i}^*)(\pi_{N,i}^*)^{-1}$, then by Lemma \ref{lemma: almost sure variance poi}, \eqref{eq:proof_theo_3.2_01} and Condition (C\ref{con: variance}),
	\begin{eqnarray}
	\notag & & \E_{*}\left(n_0N^{-2}{V}_{N,Poi}^*\right) \\
	\notag & = & \E_{*}\left\{n_0N^{-2}\sum_{i=1}^{n}N_i^*y_{N,i}^2(1-\pi_{N,i})\pi_{N,i}^{-1}\right\} \\
	\notag & = & n_0N^{-2}\sum_{i=1}^{n}N\pi_{N,i}^{-1}\left(\sum_{j=1}^{n}\pi_{N,j}^{-1}\right)^{-1}y_{N,i}^2(1-\pi_{N,i})\pi_{N,i}^{-1} \\
	% \notag & = & n_0N^{-1}\left(\sum_{i=1}^{n}\pi_{N,i}^{-1}\right)^{-1}\sum_{i=1}^{n}y_{N,i}^2(1-\pi_{N,i})\pi_{N,i}^{-2} \\
	\notag & = & \left(N^{-1}\sum_{i=1}^{N}\pi_{N,i}^{-1}I_{N,i}\right)^{-1}\left(n_0N^{-2}\sum_{i=1}^{n}y_{N,i}^2(1-\pi_{N,i})\pi_{N,i}^{-2}\right) \\
	\notag & = & \left(N^{-1}\sum_{i=1}^{N}\pi_{N,i}^{-1}I_{N,i}\right)^{-1}\left(n_0N^{-2}\hat{V}_{N,Poi}\right) \\
	\notag & \to & \sigma_1^2
	\end{eqnarray}
	in probability. In addition, for a positive constant $C_{6,3}$,
	\begin{eqnarray}
	\notag & & \mathrm{var}_{*}\left(n_0N^{-2}{V}_{N,Poi}^*\right) \\
	\notag & = & n_0^2N^{-4}\sum_{i=1}^{n}\mathrm{var}_{*}(N_i^*)y_{N,i}^4(1-\pi_{N,i})^2\pi_{N,i}^{-2} \\
	\notag & & ~~~~~~~~+n_0^2N^{-4}\sum_{(i,j)\in\Gamma_n}\mathrm{cov}_{*}(N_i^*,N_j^*)y_{N,i}^2y_{N,j}^2(1-\pi_{N,i})(1-\pi_{N,j})\pi_{N,i}^{-1}\pi_{N,j}^{-1} \\
	\notag & = & n_0^2N^{-4}\sum_{i=1}^{n}N\rho_{N,i}\left(1-\rho_{N,i}\right)y_{N,i}^4(1-\pi_{N,i})^2\pi_{N,i}^{-2} \\
	\notag & & ~~~~~~~~-n_0^2N^{-4}\sum_{(i,j)\in\Gamma_n}N\rho_{N,i}\rho_{N,j}y_{N,i}^2y_{N,j}^2(1-\pi_{N,i})(1-\pi_{N,j})\pi_{N,i}^{-1}\pi_{N,j}^{-1} \\
	% \notag & \leq & C_{6,3}\left\{n_0^{-1}N^{-1}\sum_{i=1}^{n}y_{N,i}^4+n_0^{-2}N^{-1}\sum_{(i,j)\in\Gamma_n}y_{N,i}^2y_{N,j}^2\right\} \\
	\notag & \leq & C_{6,3}\left\{N^{-1}n_0^{-1}\sum_{i=1}^{N}y_{N,i}^4I_{N,i}+N^{-1}\left(n_0^{-1}\sum_{i=1}^{N}y_{N,i}^2I_{N,i}\right)\left(n_0^{-1}\sum_{i=1}^{N}y_{N,i}^2I_{N,i}\right)\right\} \\
	\notag & = & o_p(N^{-1}),
	\end{eqnarray}
	where $\Gamma_n=\{(i,j): i,j=1,\ldots,n \text{~and~} i\neq j\}$. Thus, we have
	\begin{eqnarray}
	n_0N^{-2}{V}_{N,Poi}^* = \sigma_1^2+o_p(1). \label{eq:proof_theo_3.2_07}
	\end{eqnarray}
	
	Denote $T_{N,Poi}^{*}=\big(\hat{V}_{N,Poi}^{*}\big)^{-1/2}\big(\hat{Y}_{N,Poi}^*-Y_N^*\big)$, where $Y_{N}^*=\sum_{i=1}^{N}y_{N,i}^*$,
	\begin{eqnarray}
	\notag & & \hat{Y}_{N,Poi}^* = \sum_{i=1}^{n}m_i^*y_{N,i}\pi_{N,i}^{-1} \\
	\notag & = & \sum_{i=1}^{N}y_{N,i}^*(\pi_{N,i}^*)^{-1}I_{N,i}^*
	\end{eqnarray}
	and
	\begin{eqnarray}
	\notag & & \hat{V}_{N,Poi}^{*} = \sum_{i=1}^{n}m_i^*y_{N,i}^2(1-\pi_{N,i})\pi_{N,i}^{-2} \\
	\notag & = & \sum_{i=1}^{N}(y_{N,i}^*)^2(1-\pi_{N,i}^*)(\pi_{N,i}^*)^{-2}I_{N,i}^*,
	\end{eqnarray}
	here $I_{N,i}^*$ is the bootstrap counterpart of $I_{N,i}$ and $I_{N,i}^*\sim \mathrm{Ber}(\pi_{N,i}^*)$ conditional on the bootstrap finite population $\mathcal{F}_N^*$.
	
	In view of \eqref{eq:proof_theo_3.2_06} and \eqref{eq:proof_theo_3.2_07}, by similar arguments used in the proof of Lemma \ref{lem_03}, we can show that
	\begin{eqnarray}
	\notag T_{N,Poi}^{*}=  \Delta_{N,1}^*+\Delta_{N,2}^*+\Delta_{N,3}^*+\Delta_{N,4}^*,
	\end{eqnarray}
	where 
	\begin{eqnarray}
	\notag \Delta_{N,1}^* &= & (V_{N,Poi}^*)^{-1/2}\sum_{i=1}^{N}y_{N,i}^*(\pi_{N,i}^*)^{-1}(I_{N,i}^*-\pi_{N,i}^*), \\
	\notag \Delta_{N,2}^* &= & -2^{-1}(V_{N,Poi}^*)^{-3/2}\sum_{(i,j)\in\Gamma_N}y_{N,i}^*(y_{N,j}^*)^2(1-\pi_{N,j}^*)(\pi_{N,i}^*)^{-1}(\pi_{N,j}^*)^{-2} \\
	\notag & & ~~~~~~~~~~~~~~~~~~~~~~~~~~~~~~~~~~~~~~~~~~~~~~~\times(I_{N,i}^*-\pi_{N,i}^*)(I_{N,j}^*-\pi_{N,j}^*), \\
	\notag \Delta_{N,3}^* & = & -2^{-1}(V_{N,Poi}^*)^{-3/2}\sum_{i=1}^{N}(y_{N,i}^*)^3(1-\pi_{N,i}^*)^2(\pi_{N,i}^*)^{-2},
	\end{eqnarray}
	and $\Delta_{N,4}^*$ satisfies
	$$ \P_{Poi}^*\left(|\Delta_{N,4}^*|\geq n_0^{-1/2}(\log n_0)^{-1}\right)=o_p(n_0^{-1/2}), $$
	where $\mathbb{P}_{Poi}^*$ is the counterpart of $\mathbb{P}_{Poi}$ conditional on the bootstrap finite population $\{y_{N,1}^*,\ldots,y_{N,N}^*\}$.
	
	Let $\hat{F}_{N,Poi}^*(z)=\P_{Poi}^*(T_{N,Poi}^*\leq z)$ be the cumulative distribution function of $T_{N,Poi}^*$ conditional on the bootstrap finite population $\mathcal{F}_N^*$, we proceed to prove
	\begin{eqnarray}\label{eq:proof_theo_3.2_08}
	& & \hat{F}_{N,Poi}^*(z) \\
	& = & \Phi(z)+\left\{\frac{\mu_{N,Poi}^{(3)*}}{6(V_{N,Poi}^*)^{3/2}}(1-z^2)+\frac{\tau_{N,Poi}^{(3)*}}{2(V_{N,Poi}^*)^{3/2}}z^2\right\}\phi(z) +o_p(n_0^{-1/2}), \notag
	\end{eqnarray}
	uniformly in $z\in\mathbb{R}$, where $$\mu_{N,Poi}^{(3)*} = \sum_{i=1}^{N}E_{**}(\hat{Y}_{N,Poi}^{*}-Y_{N}^{*})^3 = \sum_{i=1}^N(y_{N,i}^{*})^3(1-\pi_{N,i}^{*})\{(1-\pi_{N,i}^{*})^2(\pi_{N,i}^{*})^{-2}-1\}$$ and $$\tau_{N,Poi}^{(3)*} = \sum_{i=1}^N(y_{N,i}^{*})^3(1-\pi_{N,i}^{*})(\pi_{N,i}^{*})^{-2}.$$
	
	Denote $W_N^*=\Delta_{N,1}^*+\Delta_{N,2}^*$ and let $\phi_{W_N^*}(t)$ be the characteristic function (c.f.) of $W_N^*$ conditional on the bootstrap finite population $\mathcal{F}_N^*$, that is,
	$$ \phi_{W_N^*}(t)=\E_{**}\{\exp(\iota tW_N^*)\}=\E_{**}\left[\exp\{\iota t(\Delta_{N,1}^*+\Delta_{N,2}^*)\}\right].$$ 
	
	Regarding the proof of Lemma \ref{lemma: 05}, it is enough to show that
	\begin{align}\label{eq:proof_theo_3.2_09}
	\frac{1}{\pi}\int_{n_0^{1/4}(\log n_0)^{-1}\leq|t|\leq a_{\varepsilon}n_0^{1/2}}\frac{1}{|t|}|\phi_{W_N^*}(t)|dt = o_p(n_0^{-1/2}),
	\end{align}
	for any arbitrary $\varepsilon\in(0,1)$ in order to finalize \eqref{eq:proof_theo_3.2_08}.
	Here $a_{\varepsilon}$ is chosen to satisfy $|dF_{E,N1}(z)/dx|\leq a_{\varepsilon}\varepsilon$ with $F_{E,N1}(z)$ defined as	
	$$ F_{E,N1}(z)=\Phi(z)+6^{-1}\left(V_{N,Poi}^{-3/2}\mu_{N,Poi}^{(3)}-3V_{N,Poi}^{-3/2}\tau_{N,Poi}^{(3)}\right)(1-z^2)\phi(z). $$
	
	Denote $X_{N,i}^*=(V_{N,Poi}^*)^{-1/2}y_{N,i}^*(\pi_{N,i}^*)^{-1}(I_{N,i}^*-\pi_{N,i}^*)$ and
	\begin{eqnarray}
	\notag& & U_{N,i,j}^*=-2^{-1}V_{N,Poi}^{-3/2}\{y_{N,i}^*(1-\pi_{N,i}^*)(\pi_{N,i}^*)^{-1}+y_{N,j}^*(1-\pi_{N,j}^*)(\pi_{N,j}^*)^{-1}\} \\
	\notag& & ~~~~~~~~~~~~~~~~~~~~~~~\times y_{N,i}^*y_{N,j}^*(\pi_{N,i}^*)^{-1}(\pi_{N,j}^*)^{-1}(I_{N,i}^*-\pi_{N,i}^*)(I_{N,j}^*-\pi_{N,j}^*),
	\end{eqnarray}
	then $\Delta_{N,1}^*=\sum_{i=1}^{N}X_{N,i}^*$, $\Delta_{N,2}^*=\sum_{1\leq i<j\leq N}U_{N,i,j}^*$ and $$W_{N}^*=\sum_{i=1}^{N}X_{N,i}^*+\sum_{1\leq i<j\leq N}U_{N,i,j}^*.$$
	
	Take $m=\lfloor n_0^{-1/2}N/(\log n_0)\rfloor$, we prove \eqref{eq:proof_theo_3.2_09} under two different case scenarios: $m\geq n_0$ and $m<n_0$.
	
	First, consider the case of $m\geq n_0$, we have that $n_0=o(N^{2/3})$. According to Lemma \ref{lemma_06}, we assume $y_{N,1}^*=y_{N,1}, \ldots, y_{N,n}^*=y_{N,n}$, without loss of generality. Then, by \eqref{eq:proof_theo_3.2_07}, condition (C\ref{cond: extra poi 02}) and the fact that
	\begin{eqnarray}
	\notag & & \E\left\{\exp\left(\iota tX_{N,i}\right)\mid \mathcal{F}_N\right\} \\
	\notag & = & \pi_{N,i}\exp\big\{\iota tV_{N,Poi}^{-1/2}y_{N,i}(1-\pi_{N,i})\pi_{N,i}^{-1}\big\}+(1-\pi_{N,i})\exp\big(-\iota tV_{N,Poi}^{-1/2}y_{N,i}\big),
	\end{eqnarray}
	we arrive at
	\begin{eqnarray}
	\notag &  & \left|\prod_{i=1}^{m}\E_{**}\{\exp(\iota tX_{N,i}^*)\}\right| \\
	\notag & \leq & \left|\prod_{i=1}^{n}\E_{**}\{\exp(\iota tX_{N,i}^*)\}\right| \\
	\notag & = & \bigg|\prod_{i=1}^{n}\bigg[\pi_{N,i}^*\exp\left\{\iota t\left(V_{N,Poi}^*\right)^{-1/2}y_{N,i}^*(1-\pi_{N,i}^*)\left(\pi_{N,i}^*\right)^{-1}\right\} \\
	\notag & & ~~~~~~~~~~~~~~~~~~+(1-\pi_{N,i}^*)\exp\left\{-\iota t\left(V_{N,Poi}^*\right)^{-1/2}y_{N,i}^*\right\}\bigg]\bigg| \\
	\notag & = & \bigg|\prod_{i=1}^{n}\bigg[\pi_{N,i}\exp\left\{\iota t\left(V_{N,Poi}^*\right)^{-1/2}y_{N,i}(1-\pi_{N,i})\pi_{N,i}^{-1}\right\} \\
	\notag & & ~~~~~~~~~~~~~~~~~~+(1-\pi_{N,i})\exp\left\{-\iota t\left(V_{N,Poi}^*\right)^{-1/2}y_{N,i}\right\}\bigg]\bigg| \\
	& = & O_p(n_0^{-a}). \label{eq:proof_theo_3.2_11}
	\end{eqnarray} 
	By \eqref{eq:proof_theo_3.2_06_1}, we have that $m^{-1}\sum_{i=1}^{m}(y_{N,i}^*)^4=O_p(1)$. Similar to the proof of Lemma \ref{lem_03}, define $$\Delta_{N,2}^*(m)=\sum_{i=1}^{m}\sum_{j=i+1}^{N}U_{N,i,j}^*,$$ then $\E_{**}\{\Delta_{N,2}^*(m)\}^2=O_p\left(n_0^{-1}N^{-1}m\right)$. Furthermore, for positive constants $C_{6,4}$ and $C_{6,5}$,
	\begin{eqnarray}
	\notag & & |\phi_{W_N^*}(t)| \\
	\notag & \leq & \left|\E_{**}(\exp[\iota t\{W_{N}^*-\Delta_{N,2}^*(m)\}])\right| \\
	\notag& & +|t|\left|\E_{**}(\Delta_{N,2}^*(m)\exp[\iota t\{W_{N}^*-\Delta_{N,2}^*(m)\}]\right| \\
	\notag& & +C_{6,4}t^2n_0^{-1}N^{-1}m \\
	\notag & \leq & C_{6,5}\left(n_0^{-a}+|t|n_0^{3/2-a}N^{-1}+t^2n_0^{-1}N^{-1}m\right)
	\end{eqnarray}
	in probability and this immediately implies
	\begin{eqnarray}
	\notag & & \frac{1}{\pi}\int_{n_0^{1/4}(\log n_0)^{-1}\leq|t|\leq a_{\varepsilon}n_0^{1/2}}\frac{1}{|t|}|\phi_{W_N}^*(t)|dt \\
	\notag & \leq &  \frac{C_{6,5}}{\pi}\int_{n_0^{1/4}(\log n_0)^{-1}\leq|t|\leq a_{\varepsilon}n_0^{1/2}}\frac{1}{|t|}(n_0^{-a}+|t|n_0^{3/2-a}N^{-1}+t^2n_0^{-1}N^{-1}m)dt \\
	\notag & = & o_p(n_0^{-1/2}).
	\end{eqnarray}
	
	Next, consider the case of $m<n_0$. According to Lemma \ref{lemma_06}, we assume $y_{N,1}^*=y_{N,1}, \ldots, y_{N,m}^*=y_{N,m}$, without loss of generality. Then, use the same technique as \eqref{eq:proof_theo_3.2_11}, we can show that
	\begin{eqnarray}
	\notag \left|\prod_{i=1}^{m}\E_{**}\{\exp(\iota tX_{N,i}^*)\}\right|  = O_p(m^{-a}).
	\end{eqnarray}
	The following procedure is similar to the case of $m\geq n_0$. Thus, we arrive at
	\begin{eqnarray}
	\notag & & \hat{F}_{N,Poi}^*(z) \\
	& = & \Phi(z)+\left\{\frac{\mu_{N,Poi}^{(3)*}}{6(V_{N,Poi}^*)^{3/2}}(1-z^2)+\frac{\tau_{N,Poi}^{(3)*}}{2(V_{N,Poi}^*)^{3/2}}z^2\right\}\phi(z) +o_p(n_0^{-1/2}). \notag
	\end{eqnarray}
	It remains to show that
	$$ \hat{V}_{N,Poi}^{-3/2}\hat{\mu}_{N,Poi}^{(3)}-(V_{N,Poi}^*)^{-3/2}\mu_{N,Poi}^{(3)*}=o_p(n_0^{-1/2}) $$
	and
	$$ \hat{V}_{N,Poi}^{-3/2}\hat{\tau}_{N,Poi}^{(3)}-(V_{N,Poi}^*)^{-3/2}\tau_{N,Poi}^{(3)*}=o_p(n_0^{-1/2}). $$
	Mentioned that
	\begin{eqnarray}
	\notag & & \hat{V}_{N,Poi}^{-3/2}\hat{\mu}_{N,Poi}^{(3)}-(V_{N,Poi}^*)^{-3/2}\mu_{N,Poi}^{(3)*} \\
	\notag & = & n_0^{-1/2}\left\{\left(n_0N^{-2}\hat{V}_{N,Poi}\right)^{-3/2}\left(n_0^2N^{-3}\hat{\mu}_{N,Poi}^{(3)}\right)\right.\\
	\notag & & ~~~~~~~~~~~~~\left.-\left(n_0N^{-2}V_{N,Poi}^*\right)^{-3/2}\left(n_0^2N^{-3}\mu_{N,Poi}^{(3)*}\right)\right\}
	\end{eqnarray}
	and
	\begin{eqnarray}
	\notag & & \hat{V}_{N,Poi}^{-3/2}\hat{\tau}_{N,Poi}^{(3)}-(V_{N,Poi}^*)^{-3/2}\tau_{N,Poi}^{(3)*} \\
	\notag & = & n_0^{-1/2}\left\{\left(n_0N^{-2}\hat{V}_{N,Poi}\right)^{-3/2}\left(n_0^2N^{-3}\hat{\tau}_{N,Poi}^{(3)}\right)\right.\\
	\notag & & ~~~~~~~~~~~~~\left.-\left(n_0N^{-2}V_{N,Poi}^*\right)^{-3/2}\left(n_0^2N^{-3}\tau_{N,Poi}^{(3)*}\right)\right\},
	\end{eqnarray}
	it suffices to prove that
	\begin{eqnarray}
	n_0N^{-2}({V}_{N,Poi}^* -\hat{V}_{N,Poi})&\to& 0, \label{eq: th 2 4} \\
	n_0^2N^{-3}(\mu_{N,Poi}^{(3)*}- \hat{\mu}_{N,Poi}^{(3)}) &\to& 0, \label{eq: th 2 5} \\
	n_0^2N^{-3}(\tau_{N,Poi}^{(3)*}- \hat{\tau}_{N,Poi}^{(3)}) &\to& 0, \label{eq: th 2 6}
	\end{eqnarray}
	in probability conditional on the series of realized samples. For (\ref{eq: th 2 4}), it is a consequence result of
	$n_0N^{-2}{V}_{N,Poi}^* = \sigma_1^2+o_p(1)$ and Lemma \ref{lemma: almost sure variance poi}. For (\ref{eq: th 2 5}), 
	consider
	\begin{eqnarray}
	&&	E_*(n_0^2N^{-3}\mu_{N,Poi}^{(3)*}) \notag \\ 
	&=& n_0^2N^{-3}\sum_{i=1}^n\frac{N\pi_{N,i}^{-1}}{\sum_{j=1}^n\pi_{N,j}^{-1} }y_{N,i}^3(1-\pi_{N,i})\{(1-\pi_{N,i})^2\pi_{N,i}^{-2}-1\}\notag \\ 
	&=& n_0^2N^{-3}\hat{\mu}_{N,Poi}^{(3)}\{1+o(1)\} \label{eq: E*}
	\end{eqnarray}
	almost surely, where the second equality holds by (\ref{eq:proof_theo_3.2_01}).	
	Next, consider 
	\begin{eqnarray}
	&&\mathrm{var}_*(n_0^2N^{-3}\mu_{N,Poi}^{(3)*})\notag \\ 
	&\leq&4n_0^4N^{-6}\sum_{i=1}^n\frac{N\pi_{N,i}^{-1}}{\sum_{j=1}^n\pi_{N,j}^{-1} }y_{N,i}^6\pi_{N,i}^{-4}\notag \\ 
	&\leq& 4C_1^{-5}n_0^{-1}N^{-1}\sum_{i=1}^Ny_{N,i}^6\{1+o(1)\}=o(1)\label{eq: v*}
	\end{eqnarray}
	almost surely, where the second inequality holds by (\ref{eq:proof_theo_3.2_01}) and the last equality holds by (C\ref{cond: C4poi}).
	By (\ref{eq: lemma1 1.2}) in Lemma \ref{lemma: almost sure variance poi}, \eqref{eq: E*} and \eqref{eq: v*}, we have proved (\ref{eq: th 2 5}), and the proof of (\ref{eq: th 2 6}) is similar.	This concludes the proof of Theorem \ref{theorem: e 2}.
\end{proof}

\begin{lemma}\label{lemma: srs expectations}
	Let $i,j,k,l$ be pairwise distinct positive integers, which are no larger than $N$. Suppose that  (C\ref{cond: srs2}) holds. Under SRS, we have
	\begin{eqnarray}
	&&E[(I_{N,i}\pi_{N,i}^{-1}-1)^4\mid\mathcal{F}_N] =O(n^{-3}N^3),\label{eq: srs 1}\\
	&&E[(I_{N,i}\pi_{N,i}^{-1}-1)^3(I_{N,j}\pi_{N,j}^{-1}-1)\mid\mathcal{F}_N] = O(N^2n^{-2}), \label{eq: srs 2}\\
	&&E[(I_{N,i}\pi_{N,i}^{-1}-1)^2(I_{N,j}\pi_{N,j}^{-1}-1)^2\mid\mathcal{F}_N] = O(N^2n^{-2}), \label{eq: srs 3}\\
	&&E[(I_{N,i}\pi_{N,i}^{-1}-1)(I_{N,j}\pi_{N,j}^{-1}-1)(I_{N,k}\pi_{N,k}^{-1}-1)^2\mid\mathcal{F}_N] \notag \\ 
	&&= O(Nn^{-2}), \label{eq: srs 4}\\
	&&E[(I_{N,i}\pi_{N,i}^{-1}-1)(I_{N,j}\pi_{N,j}^{-1}-1)(I_{N,k}\pi_{N,k}^{-1}-1)(I_{N,l}\pi_{N,l}^{-1}-1)\mid\mathcal{F}_N] \notag \\ 
	&&= O(n^{-2}). \label{eq: srs 5}
	\end{eqnarray}
\end{lemma}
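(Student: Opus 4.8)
The plan is to use the fact that under SRS every inclusion probability equals $\pi_{N,i}=n/N$, so that, writing $w=N/n$ and $b_i:=I_{N,i}\pi_{N,i}^{-1}-1=wI_{N,i}-1$, each $b_i$ is a bounded random variable with $\E(b_i\mid\mathcal{F}_N)=0$ taking the values $w-1$ and $-1$ with probabilities $1/w$ and $1-1/w$. Then \eqref{eq: srs 1} is just the one-variable identity $\E(b_i^4\mid\mathcal{F}_N)=w^{-1}(w-1)^4+(1-w^{-1})=w^{3}(1-n/N)^4+(1-n/N)=O(N^3n^{-3})$, using $n/N\le1-\kappa$. For the four remaining moments I would reduce everything to joint inclusion probabilities: since $I_{N,i}^{r}=I_{N,i}$, each power $b_i^{r}$ equals $\alpha_r I_{N,i}+(-1)^{r}$ for an explicit polynomial $\alpha_r$ in $w$ (for instance $\alpha_2=w^2-2w$, $\alpha_3=w^3-3w^2+3w$), so each of \eqref{eq: srs 2}--\eqref{eq: srs 5} becomes a short linear combination of terms $\E(\prod_{s}I_{N,i_s}\mid\mathcal{F}_N)$ over at most four pairwise-distinct indices, and under SRS a product of $t$ distinct indicators has expectation $(n)_t/(N)_t=\prod_{j=0}^{t-1}(n-j)/(N-j)$.

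The second ingredient is the expansion $(n)_t/(N)_t=f^{t}+\delta_t$ with $f=n/N=1/w$ and
\[
\delta_t=-\binom{t}{2}\frac{f^{t-1}(1-f)}{N}+O\!\left(f^{t-2}N^{-2}\right),
\]
which is legitimate by (C\ref{cond: srs2}) since $1/(fN)=1/n=o(1)$. The guiding observation is that replacing the $I_{N,i}$ by i.i.d.\ $\mathrm{Ber}(f)$ variables would make the $b_i$ i.i.d.\ and centred, so the ``$f^{t}$-parts'' of each linear combination reassemble exactly into the product of marginals $\prod_{s}\E(b_{i_s}^{r_s}\mid\mathcal{F}_N)$, with everything else carried by the corrections $\delta_t$. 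For \eqref{eq: srs 3} this product is the dominant term, $(\E(b_i^2\mid\mathcal{F}_N))^2=w^2(1-f)^2=O(N^2n^{-2})$, which gives the bound at once (the $\delta_2$-correction contributes only $O(N^2n^{-3})$). For \eqref{eq: srs 2}, \eqref{eq: srs 4} and \eqref{eq: srs 5} the product of marginals vanishes because $\E(b_i\mid\mathcal{F}_N)=0$, so those three moments are governed entirely by the $\delta_t$-terms.

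For \eqref{eq: srs 2} and \eqref{eq: srs 4} the surviving $O(f^{t-1}N^{-1})$ pieces of the $\delta_t$'s, after one uses $wf=1$, give $\E(b_i^3b_j\mid\mathcal{F}_N)=O(w^3N^{-1})=O(N^2n^{-3})\subseteq O(N^2n^{-2})$ and $\E(b_ib_jb_k^2\mid\mathcal{F}_N)=O(w^2N^{-1})=O(Nn^{-2})$; already here the leading $f^{t}$-cancellation is essential, since retaining it would leave a spurious factor $w^2$. The delicate case is \eqref{eq: srs 5}: the $O(N^{-1})$ contributions in $w^4\delta_4-4w^3\delta_3+6w^2\delta_2$ sum to $-\frac{6w^2f(1-f)}{N}(wf-1)^2=0$, a \emph{second} cancellation, so that \eqref{eq: srs 5} is controlled only by the residuals $w^4\cdot O(f^2N^{-2})$, $w^3\cdot O(fN^{-2})$ and $w^2\cdot O(N^{-2})$, each equal to $O(w^2N^{-2})=O(n^{-2})$. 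I expect this double cancellation in \eqref{eq: srs 5} to be the main obstacle: a crude term-by-term estimate yields only $O(N^2n^{-2})$, far too weak for later use, and one must expand the falling factorials to second order in $1/N$ and recognise that both the $f^4$-level and the $1/N$-level contributions organise into powers of $wf-1=0$. Grouping terms this way, rather than expanding blindly, makes both cancellations transparent and is the bookkeeping device I would use throughout.
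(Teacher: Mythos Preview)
Your proposal is correct, and the argument is complete: writing $b_i^{r}=\alpha_r I_{N,i}+(-1)^r$ with $\alpha_r=(w-1)^r-(-1)^r$ reduces each moment to a polynomial in the falling-factorial ratios $(n)_t/(N)_t$, and your two-term expansion $(n)_t/(N)_t=f^t-\binom{t}{2}f^{t-1}(1-f)/N+O(f^{t-2}N^{-2})$ is exactly what is needed. The identification of the $f^t$-parts with the i.i.d.\ Bernoulli answer is the right organising principle, and your verification that both the $f^t$-level and the $N^{-1}$-level of \eqref{eq: srs 5} collapse as powers of $wf-1=0$ is accurate; the second cancellation is real and essential, not an artefact.

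The paper takes a different, more elementary route: for each of \eqref{eq: srs 2}--\eqref{eq: srs 5} it enumerates all possible configurations of the indicators (e.g.\ for four indices, the five patterns $\#\{I_i=1\}\in\{0,1,2,3,4\}$), writes down the exact SRS probability of each as a ratio of falling factorials, and then simplifies the resulting sum by brute-force algebra until the stated order emerges. No expansion in $1/N$ is used; the cancellations you exploit appear in the paper only implicitly, as the outcome of combining several explicit fractions. Your approach is more conceptual and scales better (the same device would handle higher mixed moments with no new ideas), and it explains \emph{why} the bounds hold: the leading term is the i.i.d.\ answer, which vanishes whenever a first power of some $b_i$ is present, and the finite-population corrections enter with an extra factor of $n^{-1}$ per order. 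The paper's approach, by contrast, requires no asymptotic expansion and gives exact intermediate expressions, at the cost of heavier bookkeeping; for \eqref{eq: srs 5} in particular it must track five terms through two rounds of simplification before the $O(n^{-2})$ bound becomes visible.
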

\begin{proof}[Proof of Lemma \ref{lemma: srs expectations}]
	Consider
	\begin{eqnarray}
	E[(I_{N,i}\pi_{N,i}^{-1}-1)^4\mid\mathcal{F}_N] &=& \pi_{N,i}(\pi_{N,i}^{-1}-1)^4  +(1-\pi_{N,i}) \notag\\ 
	&=&(1-\pi_{N,i})[(1-\pi_{N,i})^3\pi_{N,i}^{-3}+1]\notag\\
	&\leq& N^3n^{-3}, \notag
	\end{eqnarray}
	where the last inequality holds by the fact that $(1-x)^3 + x^3 \leq1$ for $x\in[0,1]$. Thus, we have proved (\ref{eq: srs 1}).
	
	Denote $\#{A}$ to be the number of elements that equal to 1 in set $A$. Under SRS, we have 
	\begin{eqnarray}
	P_N(\#\{I_{N,i},I_{N,j}\}=2) &=& \displaystyle\frac{n(n-1)}{N(N-1)},\notag\\ 
	P_N(\{I_{N,i}=1,I_{N,j}=0\}) &=& \displaystyle\frac{n(N-n)}{N(N-1)} ,\notag \\
	P_N(\#\{I_{N,i},I_{N,j}\}=0) &=& \displaystyle\frac{(N-n)(N-n-1)}{N(N-1)},\notag
	\end{eqnarray}
	Under SRS, we have $\pi_{N,i}^{-1}-1 = (N-n)n^{-1}$ for $i=1,\ldots,N$. Consider 
	\begin{eqnarray}
	&&E[(I_{N,i}\pi_{N,i}^{-1}-1)^3(I_{N,j}\pi_{N,j}^{-1}-1)\mid\mathcal{F}_N] \notag \\
	&=&   \frac{(N-n)^4}{n^4}\frac{n(n-1)}{N(N-1)} - \frac{(N-n)^3}{n^3}\frac{n(N-n)}{N(N-1)} \notag\\
	&&-\frac{N-n}{n}\frac{n(N-n)}{N(N-1)} + \frac{(N-n)(N-n-1)}{N(N-1)} \notag\\
	&=& -\frac{(N-n)^4}{n^3N(N-1)} + O(1)\notag \\
	&=& O(N^2n^{-2}),\label{eq: lemma 30}
	\end{eqnarray}
	where the last equality holds by the facts that ${(N-n)^4}{[n^3N(N-1)]^{-1}}=O(N^2n^{-3})$ and $N^2n^{-2} = O(1)$ if $n\asymp N$. Thus, we have proved (\ref{eq: srs 2}) by (\ref{eq: lemma 30}). 
	
	Consider 
	\begin{eqnarray}
	&&E[(I_{N,i}\pi_{N,i}^{-1}-1)^2(I_{N,j}\pi_{N,j}^{-1}-1)^2\mid\mathcal{F}_N]\notag \\ 
	&=&\frac{(N-n)^4}{n^4}\frac{n(n-1)}{N(N-1)}  + 2\frac{(N-n)^2}{n^2}\frac{n(N-n)}{N(N-1)} + \frac{(N-n)(N-n-1)}{N(N-1)} \notag \\ 
	&=&O(N^2n^{-2}) + O(Nn^{-1}) + O(1)\notag \\ 
	&=& O(N^2n^{-2}),\notag
	\end{eqnarray}
	which proves (\ref{eq: srs 3}).
	
	Similar with the case for two terms, we have the following results under SRS. That is, 
	\begin{eqnarray}
	P_N(\#\{I_{N,i},I_{N,j},I_{N,k}\}=3) &=&\displaystyle\frac{n(n-1)(n-2)}{N(N-1)(N-2)} ,\notag\\ 
	P_N(\{I_{N,i}=1,I_{N,j}=1,I_{N,k}=0\}) &=& \displaystyle\frac{n(n-1)(N-n)}{N(N-1)(N-2)} ,\notag\\
	P_N(\{I_{N,i}=1,I_{N,j}=0,I_{N,k}=0\}) &=& \displaystyle\frac{n(N-n)(N-n-1)}{N(N-1)(N-2)},\notag\\
	P_N(\#\{I_{N,i},I_{N,j},I_{N,k}\}=0) &=& \displaystyle\frac{(N-n)(N-n-1)(N-n-2)}{N(N-1)(N-2)}.\notag
	\end{eqnarray}
	
	Consider
	\begin{eqnarray}
	&&E[(I_{N,i}\pi_{N,i}^{-1}-1)(I_{N,j}\pi_{N,j}^{-1}-1)(I_{N,k}\pi_{N,k}^{-1}-1)^2\mid\mathcal{F}_N] \notag \\
	&=& \frac{(N-n)^4}{n^4}\frac{n(n-1)(n-2)}{N(N-1)(N-2)} +  \frac{(N-n)^2}{n^2}\frac{n(n-1)(N-n)}{N(N-1)(N-2)} \notag 
	\\  &&-2\frac{(N-n)^3}{n^3}\frac{n(n-1)(N-n)}{N(N-1)(N-2)}-	2\frac{N-n}{n}\frac{n(N-n)(N-n-1)}{N(N-1)(N-2)}	\notag \\ 
	&&	+\frac{(N-n)^2}{n^2}\frac{n(N-n)(N-n-1)}{N(N-1)(N-2)} +\frac{(N-n)(N-n-1)(N-n-2)}{N(N-1)(N-2)}\notag \\
	&=& \frac{(n-1)(n-2)(N-n)^4}{n^3N(N-1)(N-2)} + \frac{(N-n)^3}{nN(N-1)}-2\frac{(n-1)(N-n)^4}{n^2N(N-1)(N-2)}  \notag \\ 
	&& +\frac{(N-n)(N-n-1)(N-n-2) - 2(N-n)^2(N-n-1)}{N(N-1)(N-2)}.\label{eq: srs 1.1}
	\end{eqnarray}
	After some algebra, the first three terms of (\ref{eq: srs 1.1}) are 
	\begin{eqnarray}
	&&\frac{(n-1)(n-2)(N-n)^4}{n^3N(N-1)(N-2)} + \frac{(N-n)^3}{nN(N-1)}\notag \\ &&-2\frac{(n-1)(N-n)^4}{n^2N(N-1)(N-2)}\notag \\ 
	&=& \frac{(N-n)^3}{N(N-1)(N-2)}+ O(Nn^{-2}).\label{eq: srs 1.2}
	\end{eqnarray}
	Together with (\ref{eq: srs 1.1}) and (\ref{eq: srs 1.2}), we have 
	\begin{eqnarray}
	&&E[(I_{N,i}\pi_{N,i}^{-1}-1)(I_{N,j}\pi_{N,j}^{-1}-1)(I_{N,k}\pi_{N,k}^{-1}-1)^2\mid\mathcal{F}_N] \notag \\
	&=& O(Nn^{-2})+\frac{(N-n)^3-(N-n)(N-n-1)(N-n+2)}{N(N-1)(N-2)} \notag\\
	&=& O(Nn^{-2}), \label{eq: lemma 4}
	\end{eqnarray}
	where the last equality holds by (C\ref{cond: srs2}) and the fact that the second term in the first equality converges to 0. Thus, we have shown (\ref{eq: srs 4}) by (\ref{eq: lemma 4}).

	For four terms under SRS, we have 
	\begin{eqnarray}
	P_N(\#\{I_{N,i},I_{N,j},I_{N,k},I_{N,l}\}=4) &=&\displaystyle \frac{n(n-1)(n-2)(n-3)} {N(N-1)(N-2)(N-3)},\notag\\ 
	P_N(\{I_{N,i}=1,I_{N,j}=1,I_{N,k}=1,I_{N,l}=0\}) &=&\displaystyle \frac{n(n-1)(n-2)(N-n)} {N(N-1)(N-2)(N-3)},\notag\\
	P_N(\{I_{N,i}=1,I_{N,j}=1,I_{N,k}=0,I_{N,l}=0\}) &=& \displaystyle\frac{n(n-1)(N-n)(N-n-1)} {N(N-1)(N-2)(N-3)},\notag\\
	P_N(\{I_{N,i}=1,I_{N,j}=0,I_{N,k}=0,I_{N,l}=0\}) &=& \displaystyle\frac{n(N-n)(N-n-1)(N-n-2)} {N(N-1)(N-2)(N-3)},\notag\\
	P_N(\#\{I_{N,i},I_{N,j},I_{N,k},I_{N,l}\}=0) &=&
	\displaystyle	\frac{(N-n)(N-n-1)(N-n-2)(N-n-3)} {N(N-1)(N-2)(N-3)}.\notag
	\end{eqnarray}
	
	Now, consider 
	\begin{eqnarray}
	&&E[(I_{N,i}\pi_{N,i}^{-1}-1)(I_{N,j}\pi_{N,j}^{-1}-1)(I_{N,k}\pi_{N,k}^{-1}-1)(I_{N,l}\pi_{N,l}^{-1}-1)\mid\mathcal{F}_N]\notag \\ 
	&=&\frac{(N-n)^4}{n^4} \frac{n(n-1)(n-2)(n-3)} {N(N-1)(N-2)(N-3)}\notag \\ 
	&& - 4\frac{(N-n)^3}{n^3}\frac{n(n-1)(n-2)(N-n)} {N(N-1)(N-2)(N-3)}\notag \\
	&&+6\frac{(N-n)^2}{n^2}\frac{n(n-1)(N-n)(N-n-1)} {N(N-1)(N-2)(N-3)} \notag \\ 
	&& -4\frac{(N-n)}{n}\frac{n(N-n)(N-n-1)(N-n-2)} {N(N-1)(N-2)(N-3)} \notag \\
	&&+\frac{(N-n)(N-n-1)(N-n-2)(N-n-3)} {N(N-1)(N-2)(N-3)}.\label{eq: srsp 2}
	\end{eqnarray}
	
	Consider 
	\begin{eqnarray}
	&&\frac{(n-1)(n-2)(n-3)}{n^3}(N-n)^4 \notag\\
	&& -4\frac{(n-1)(n-2)}{n^2}(N-n)^4  \notag \\ 
	&&+6 \frac{(n-1)}{n}(N-n)^3(N-n-1)  \notag\\
	&&-4(N-n)^2(N-n-1)(N-n-2)  \notag\\
	&&+(N-n)(N-n-1)(N-n-2)(N-n-3)\notag \\ 
	&=&\frac{3(N-n)^4}{n^2} - \frac{6(N-n)^4}{n^3} + \frac{6(N-n)^3}{n} \notag\\
	&& +3(N-n)^2 - 6(N-n)\notag \\ 
	&=& O(N^4n^{-2}),\label{eq: psrs 2}
	\end{eqnarray}
	where the last equality is valid by (C\ref{cond: srs2}). Together with (\ref{eq: srsp 2}) and (\ref{eq: psrs 2}), we have 
	\begin{eqnarray}
	&&E[(I_{N,i}\pi_{N,i}^{-1}-1)(I_{N,j}\pi_{N,j}^{-1}-1)(I_{N,k}\pi_{N,k}^{-1}-1)(I_{N,l}\pi_{N,l}^{-1}-1)\mid\mathcal{F}_N]\notag \\ 
	&=& O(N^4n^{-2}) \{N(N-1)(N-2)(N-3)\}^{-1} \notag \\
	&=& O(n^{-2}).\notag
	\end{eqnarray}
	Thus, we have proved (\ref{eq: srs 5}).
\end{proof}

\begin{proof}[Proof of Lemma \ref{lemma: srs 1}] Based on basic algebra and (C\ref{cond: C4poi}), we can show   (\ref{eq: lemma 2.2}), so the proof is omitted here.
	
	Note that 
	\begin{eqnarray}
	s_{N,SRS}^2 &=& n^{-1}\sum_{i=1}^{n}y_i^2 -\left(n^{-1}\sum_{i=1}^{n}y_i\right)^2,\notag \\ 
	\sigma_{N,SRS}^2 &=& N^{-1}\sum_{i=1}^{N}y_i^2 - \left(N^{-1}\sum_{i=1}^{N}y_i\right)^2.\notag
	\end{eqnarray}
	To show (\ref{eq: lemma srs 1.1}), it is enough to show that
	\begin{eqnarray}
	n^{-1}\sum_{i=1}^{n}y_i^2 - N^{-1}\sum_{i=1}^{N}y_i^2 &\to&0,\label{eq: psrs 1.1}\\
	n^{-1}\sum_{i=1}^{n}y_i - N^{-1}\sum_{i=1}^{N}y_i &\to&0\label{eq: psrs 1.2}
	\end{eqnarray} 
	almost surely. 
	
	First, we show (\ref{eq: psrs 1.1}), and we have 
	$$
	n^{-1}\sum_{i=1}^{n}y_i^2 - N^{-1}\sum_{i=1}^{N}y_i^2 = N^{-1}\sum_{i=1}^{N}(I_{N,i}\pi_{N,i}^{-1}-1)y_{N,i}^2.
	$$
	Based on (C\ref{cond: srs2}) and the proof of  Lemma \ref{lemma: almost sure variance poi}, it is enough to show that 
	$$
	N^{-4}E\left[\left\{\sum_{i=1}^{N}(I_{N,i}\pi_{N,i}^{-1}-1)y_{N,i}^2\right\}^4\mid\mathcal{F}_N \right]=O(n^{-2}).\label{eq: psrs 1}
	$$
	
	By some basic algebra and (C\ref{cond: C4poi}), we have 
	\begin{eqnarray}
	\sum_{i\neq j}y_{N,i}^6y_{N,j}^2 &=& O(N^2), \label{eq: sum 6,2}\\
	\sum_{i\neq j\neq k}y_{N,i}^2y_{N,j}^2y_{N,k}^4  &=& O(N^3), \label{eq: sum 2,2,4}\\
	\sum_{i\neq j\neq k\neq l}y_{N,i}^2y_{N,j}^2y_{N,k}^2y_{N,l}^2  &=& O(N^4), \label{eq: sum 2,2,2,2}
	\end{eqnarray}
	where $i\neq j\neq k$  and $i\neq j\neq k\neq l$ are defined similarly as $i\neq j$ for (\ref{eq: sum 4,4}).
	
	Consider 
	\begin{eqnarray}
	&& N^{-4}E\left[\left\{\sum_{i=1}^{N}(I_{N,i}\pi_{N,i}^{-1}-1)y_{N,i}^2\right\}^4\mid\mathcal{F}_N \right]\notag \\ 
	&=& N^{-4}\sum_{i=1}^{N}E\{(I_{N,i}\pi_{N,i}^{-1}-1)^4\mid\mathcal{F}_N\}y_{N,i}^8\notag\\
	&& +\notag  N^{-4}\sum_{i\neq j}E\{(I_{N,i}\pi_{N,i}^{-1}-1)^2(I_{N,j}\pi_{N,j}^{-1}-1)^2\mid\mathcal{F}_N\}y_{N,i}^4y_{N,j}^4\notag\\
	&& + N^{-4}\sum_{i\neq j}E\{(I_{N,i}\pi_{N,i}^{-1}-1)^3(I_{N,j}\pi_{N,j}^{-1}-1)\mid\mathcal{F}_N\}y_{N,i}^6y_{N,j}^2  \notag \\
	&& + N^{-4}\sum_{i\neq j\neq k}E\{(I_{N,i}\pi_{N,i}^{-1}-1)(I_{N,j}\pi_{N,j}^{-1}-1)(I_{N,k}\pi_{N,k}^{-1}-1)^2\mid\mathcal{F}_N\}y_{N,i}^2y_{N,j}^2y_{N,k}^4  \notag \\ 
	&&+ N^{-4}\sum_{i\neq j\neq k\neq l}E\{(I_{N,i}\pi_{N,i}^{-1}-1)(I_{N,j}\pi_{N,j}^{-1}-1)(I_{N,k}\pi_{N,k}^{-1}-1)(I_{N,l}\pi_{N,l}^{-1}-1)\mid\mathcal{F}_N\}y_{N,i}^2y_{N,j}^2y_{N,k}^2y_{N,l}^2\notag \\
	&=& O(n^{-2}),\notag
	\end{eqnarray}
	where the last equality holds by Lemma \ref{lemma: srs expectations}, (\ref{eq: sum 4,4}) and (\ref{eq: sum 6,2}) to (\ref{eq: sum 2,2,2,2}). Thus, we have proved (\ref{eq: psrs 1.1}). Similarly, we can prove (\ref{eq: psrs 1.2}). 
	
	Note that 
	$$
	{\mu}^{(3)}_{N,SRS}=N^{-1}\sum_{i=1}^{N}y_{N,i}^3 - 3\bar{Y}_NN^{-1}\sum_{i=1}^{N}y_{N,i}^2 + 2\bar{Y}_N^3.
	$$
	To show (\ref{eq: lemma srs 1.2}), by (\ref{eq: psrs 1.1})  and (\ref{eq: psrs 1.2}), it remains to show 
	\begin{eqnarray}
	n^{-1}\sum_{i=1}^ny_i^3 - N^{-1}\sum_{i=1}^{N}y_{N,i}^3  &\to& 0, \label{eq: psrs 2.3} 
	\end{eqnarray}
	in probability.
	
	Note that 
	$$
	n^{-1}\sum_{i=1}^ny_i^3 - N^{-1}\sum_{i=1}^{N}y_{N,i}^3 = N^{-1}\sum_{i=1}^{N}(I_{N,i}\pi_{N,i}^{-1} -1)y_{N,i}^3. 
	$$
	Consider 
	\begin{eqnarray}
	E\left(n^{-1}\sum_{i=1}^ny_i^3 - N^{-1}\sum_{i=1}^{N}y_{N,i}^3\mid\mathcal{F}_N\right) &=& 0\label{eq: psrs 3.1}\\
	\mathrm{var}\left(n^{-1}\sum_{i=1}^ny_i^3 - N^{-1}\sum_{i=1}^{N}y_{N,i}^3\mid\mathcal{F}_N\right)&=& n^{-1}(1-nN^{-1})\sigma_{N,3}^2\notag \\ 
	&=& O(n^{-1}),\label{eq: psrs 3.2}
	\end{eqnarray}
	where (\ref{eq: psrs 3.1}) holds by the sampling design, $\sigma_{N,3}^2$ is the finite population variance of $\{y_{N,1}^3,\ldots,y_{N,N}^3\}$, and the second equality of (\ref{eq: psrs 3.2}) holds by (C\ref{cond: C4poi}).
	Together with (\ref{eq: psrs 3.1}) and (\ref{eq: psrs 3.2}), we have proved (\ref{eq: psrs 2.3}).
\end{proof}

\begin{proof}[Proof of Theorem \ref{theorem: srs 2}]
	First, we show that 
	\begin{equation}
	N^{-1}\sum_{i=1}^N(y_{N,i}^*)^8 = O_p(1).\label{eq: srs2 1}
	\end{equation}
	Consider 
	\begin{eqnarray}
	N^{-1}E_*\left\{\sum_{i=1}^N(y_{N,i}^*)^8\right\} &=& n^{-1}\sum_{i=1}^ny_{N,i}^8\label{eq: srsp 1.1} \\ 
	E\left(n^{-1}\sum_{i=1}^ny_{N,i}^8\mid\mathcal{F}_N\right) &=& N^{-1}\sum_{i=1}^Ny_{N,i}^8 \notag \\ 
	&=& O(1).\label{eq: srsp 1.2}
	\end{eqnarray}
	Together with (\ref{eq: srsp 1.1}) and (\ref{eq: srsp 1.2}), we have proved (\ref{eq: srs2 1}) using the Markov inequality.
	
	By (C\ref{cond: srs1}), there exists a strongly non-latticed distribution $G_{SRS}$ such that 
	\begin{equation}
	N^{-1}\sum_{i=1}^N\exp(\iota ty_i)\to \int \exp(\iota tx)\mathrm{d}G_{SRS}(x) \label{eq: srs2 22}
	\end{equation} 
	as $N\to\infty$ for $t\in\mathbb{R}$. Next, we show that 
	\begin{equation}
	N^{-1}\sum_{i=1}^N\exp(\iota ty_i^*)\to \int \exp(\iota tx)\mathrm{d}G_{SRS}(x) \label{eq: srs2 2}
	\end{equation}
	in probability 	as $N\to\infty$ for $t\in\mathbb{R}$, where $\{y_i^*:i=1,\ldots,N\}$ is the bootstrap finite population.
	
	By Euler's formula, we have
	\begin{eqnarray}
	&\displaystyle N^{-1}\sum_{i=1}^N\exp(\iota ty_i) = N^{-1}\sum_{i=1}^N\cos(ty_i) + \iota N^{-1}\sum_{i=1}^N\sin(ty_i),\notag \\ 
	&\displaystyle N^{-1}\sum_{i=1}^N\exp(\iota ty_i^*) = N^{-1}\sum_{i=1}^n N_i^*\cos(ty_i) + \iota N^{-1}\sum_{i=1}^nN_i^*\sin(ty_i).\notag 
	\end{eqnarray} 
	It is enough to show that 
	\begin{eqnarray}
	& \displaystyle N^{-1}\sum_{i=1}^n N_i^*\cos(ty_i) = N^{-1}\sum_{i=1}^N\cos(ty_i) + o_p(1) , \label{eq: srs2 2.1} \\ 
	& \displaystyle N^{-1}\sum_{i=1}^nN_i^*\sin(ty_i) =  N^{-1}\sum_{i=1}^N\sin(ty_i) + o_p(1).\label{eq: srs2 2.2} 
	\end{eqnarray}
	We only show the result (\ref{eq: srs2 2.1}), and (\ref{eq: srs2 2.2}) can be obtained in a similar manner.
	
	By the first step of the proposed bootstrap method, we have \
	\begin{eqnarray}
	N^{-1}E_*\left\{\sum_{i=1}^n N_i^*\cos(ty_i)\right\} &=& n^{-1}\sum_{i=1}^n \cos(ty_i)\label{eq: srs2 3.1}\\
	N^{-2}\mathrm{var}_*\left\{\sum_{i=1}^n N_i^*\cos(ty_i)\right\}&\leq&	N^{-2}\sum_{i=1}^nNn^{-1}\notag \\ 
	&=& o(1)\label{eq: srs2 3.2},
	\end{eqnarray}
	where the inequality of (\ref{eq: srs2 3.2}) holds by the negative correlation among $N_i^*$ and the fact that $\lvert\cos(ty_i)\rvert\leq 1$. Using similar argument in (\ref{eq: psrs 3.1}) and (\ref{eq: psrs 3.2}), we can have shown (\ref{eq: srs2 2.1}) by results in (\ref{eq: srs2 3.1}) and (\ref{eq: srs2 3.2}).	By (\ref{eq: srs2 2.1}) and (\ref{eq: srs2 2.2}), we have proved (\ref{eq: srs2 2}).
	
	Thus, by (C\ref{cond: srs2}), (\ref{eq: srs2 1}) and (\ref{eq: srs2 2}), we have 
	\begin{equation}
	\hat{F}^*_{N,SRS}(z) = \Phi(z)  - \frac{(1-n/N)^{1/2}{\mu}^{(3)*}_{N,SRS}}{6n^{1/2} (\sigma_{N,SRS}^*)^3}\left\{\frac{1-2n/N}{1-n/N}(z^2-1)-3z^2\right\}\phi(z) + o_p(n^{-1/2})
	\notag\label{eq: srs2 boot edg}
	\end{equation}
	almost surely conditional on the generated bootstrap finite population, where ${\mu}^{(3)*}_{SRS}$ and $(\sigma_{SRS}^*)^2$ are the bootstrap central third moment and variance.
	
	Based on Lemma \ref{lemma: srs 1}, it remains to show that 
	\begin{eqnarray}
	{\mu}^{(3)*}_{N,SRS} - \hat{\mu}^{(3)}_{N,SRS} &\to& 0 \label{eq: srs2 4}\\
	(\sigma_{N,SRS}^*)^2 - s_{N,SRS}^2 &\to& 0 \label{eq: srs2 5}
	\end{eqnarray}
	in probability. By some algebra, it is equivalent to show 
	\begin{eqnarray}
	N^{-1}\sum_{i=1}^N(y_{N,i}^*)^3  - n^{-1}\sum_{i=1}^ny_{N,i}^3 &\to& 0, \label{eq: srs2 4.1} \\ 
	N^{-1}\sum_{i=1}^N(y_{N,i}^*)^2  - n^{-1}\sum_{i=1}^ny_{N,i}^2 &\to& 0, \label{eq: srs2 4.2} \\ 
	N^{-1}\sum_{i=1}^Ny_{N,i}^*  - n^{-1}\sum_{i=1}^ny_{N,i} &\to& 0, \label{eq: srs2 4.3}  
	\end{eqnarray}
	in probability. 
	
	Consider 
	\begin{eqnarray}
	N^{-1}E_*\left\{\sum_{i=1}^N(y_{N,i}^*)^3\right\} &=&n^{-1}\sum_{i=1}^ny_{N,i}^3\label{eq: srs 5.1} \\ 
	\mathrm{var}_* \left\{N^{-1}\sum_{i=1}^N(y_{N,i}^*)^3\right\} &\leq& N^{-2} \sum_{i=1}^nNn^{-1}y_{N,i}^6\notag \\ 
	&=& o_p(1), \label{eq: srs 5.2}
	\end{eqnarray}
	where the last equality of (\ref{eq: srs 5.2}) is derived by the Markov inequality and a similar procedure for (\ref{eq: srsp 1.2}). Thus, we have proved (\ref{eq: srs2 4.1}) by (\ref{eq: srs 5.1}) and (\ref{eq: srs 5.2}). Similarly, we can prove (\ref{eq: srs2 4.2}) and (\ref{eq: srs2 4.3}). Therefore, we have shown (\ref{eq: srs2 4}) and (\ref{eq: srs2 5}), which concludes the proof of Theorem \ref{theorem: srs 2}.
\end{proof}

\begin{proof}[Proof of Lemma \ref{lemma: pps 1}]
	
	%For simplicity, denote $Z_{N}$ to be the random variable on $\mathcal{F}_N$ with the probability measure $G_{N,PPS}$, and $Z_{N,i}\sim Z_N$ to be the random variable associated with the sample for $i=1,\ldots,n$ under PPS sampling. Recall that  $G_{N,PPS}(Z_{N} = p_{N,i}^{-1}y_{N,i}) = p_{N,i}$. Denote the realized value of $Z_{N,i}$ to be $z_{N,i}$. 
	Mentioned that $Z_{N,1},\ldots,Z_{N,n}\overset{i.i.d.}{\sim}G_{N,PPS}$ and $\mathbb{P}(Z_{N,i}=p_{N,k}^{-1}y_{N,k})=p_{N,k}$ for $i=1,\ldots,n$; $k=1,\ldots,N$,
	we have 
	\begin{eqnarray}
	E(N^{-\delta}\lvert Z_{N,i}\rvert^{\delta}\mid \mathcal{F}_N) &=& N^{-\delta}\sum_{i=1}^Np_{N,i}^{-(\delta-1)}\lvert y_{N,i}\rvert^{\delta}\notag \\ 
	&=& O\left(N^{-1}\sum_{i=1}^N\lvert y_{N,i}\rvert^{\delta}\right)\notag \\ 
	&=& O(1),\label{lemma: 1 1}
	\end{eqnarray}
	for all positive $\delta\leq 8$, where the second equality holds by (C\ref{cond: C4}), and the last equality holds by (C\ref{cond: C4poi}). 
	
	\iffalse
	Denote $X_{N,i}^{(3)} = N^{-2}\{Z_{N,i}^2 - E(Z_{N,i}^2\mid \mathcal{F}_N)\}$ for $i=1,\ldots,n$, and $D_N^{(3)}$ to be the event that $\{n^{-1}\lvert \sum_{i=1}^nX_{N,i}^{(3)}\rvert>\epsilon\}$, where $\epsilon$ is a fixed positive number.  By the property of PPS sampling, random variables $\{X_{N,i}^{(3)}:i=1,\ldots,n\}$ are pairwise independent. 
	
	Consider 
	\begin{eqnarray}
	&&E\left\{\left(n^{-1}\sum_{i=1}^nX_{N,i}^{(3)}\right)^4\mid \mathcal{F}_N\right\}\notag\\&=& n^{-4}  \left[\sum_{i=1}^nE\{(X_{N,i}^{(3)})^4\mid \mathcal{F}_N\} + \sum_{(i,j)\in\Gamma_N}E\{(X_{N,i}^{(3)})^2\mid \mathcal{F}_N\}E\{(X_{N,j}^{(3)})^2\mid \mathcal{F}_N\}\right]\notag\\
	&=& O(n^{-2}),\notag
	\end{eqnarray}
	where the last equality holds by  a similar argument with Lemma \ref{lemma: almost sure variance poi} and (\ref{lemma: 1 1}). By (C\ref{cond: C4}) and the Borel-Cantelli Lemma, we have shown
	\begin{equation}
	n^{-1}N^{-2}\sum_{i=1}^nZ_{N,i}^2 - N^{-2}E(Z_{N,i}^2\mid \mathcal{F}_N)\to 0 
	\end{equation}
	almost surely. Similarly, we can show 
	\begin{equation}
	n^{-1}N^{-1}\sum_{i=1}^nZ_{N,i} - N^{-1}E(Z_{N,i}\mid \mathcal{F}_N)\to 0
	\end{equation}
	almost surely.

	\fi
	
	By the strong law of large numbers,
	\begin{equation}
	n^{-1}\sum_{i=1}^nN^{-2}Z_{N,i}^2 - E(N^{-2}Z_{N,i}^2\mid \mathcal{F}_N)\to 0\label{eq: lemma 2 1.1}
	\end{equation}
	almost surely and
	\begin{equation}
	n^{-1}\sum_{i=1}^nN^{-1}Z_{N,i} - E(N^{-1}Z_{N,i}\mid \mathcal{F}_N)\to 0\label{eq: lemma 2 1.2}
	\end{equation}
	almost surely.
	
	Note that 
	\begin{eqnarray}
	N^{-2}s_{N,PPS}^2 &=& n^{-1}\sum_{i=1}^nN^{-2}Z_{N,i}^2 - big(n^{-1}\sum_{i=1}^nN^{-1}Z_{N,i}\big)^2,\notag\\
	N^{-2} \sigma_{N,PPS}^2&=&E(N^{-2}Z_{N,i}^2\mid \mathcal{F}_N) - \{E(N^{-1}Z_{N,i}\mid \mathcal{F}_N)\}^2.\notag
	\end{eqnarray}
	By (\ref{eq: lemma 2 1.1}) and (\ref{eq: lemma 2 1.2}), we have proved (\ref{eq: lemma pps 1.2}). 
	
	Notice that 
	\begin{eqnarray}
	N^{-3}{\mu}^{(3)}_{PPS} &=& N^{-3}\sum_{i=1}^Np_{N,i}(p_{N,i}^{-3}y_{N,i}^3 - 3p_{N,i}^{-2}y_{N,i}^2Y_N+3p_{N,i}^{-1}y_{N,i}Y_N^2 - Y_N^3)\notag \\
	&=& N^{-3}\sum_{i=1}^Np_{N,i}^{-2}y_{N,i}^3 - 3N^{-3}Y_N\sum_{i=1}^Np_{N,i}^{-1}y_{N,i}^2+2N^{-3}Y_N^3\notag \\ &=& O(1),\label{eq: third moment}
	\end{eqnarray}
	where $Y_N = E(Z_{N,i}\mid \mathcal{F}_N) = \sum_{i=1}^Ny_{N,i}$, and the last equality of (\ref{eq: third moment}) holds by (C\ref{cond: C4}) and (\ref{lemma: 1 1}). In addition, for $\zeta=1,2,3$, we have
	\begin{equation}
	\notag E\left[\left\{n^{-1}\sum_{i=1}^nN^{-\zeta}Z_{N,i}^{\zeta} -E(N^{-\zeta}Z_{N,i}^{\zeta}\mid \mathcal{F}_N)\right\}\mid \mathcal{F}_N\right] = 0
	\end{equation}
	and
	\begin{eqnarray}
	\mathrm{var}\left(n^{-1}\sum_{i=1}^nN^{-\zeta}Z_{N,i}^{\zeta}\mid \mathcal{F}_N\right) \leq n^{-1}N^{-2\zeta}E(Z_{N,i}^{2\zeta}\mid \mathcal{F}_N)=O(n^{-1}).\notag 
	\end{eqnarray}
	By Markov's inequality, 
	\begin{eqnarray}
	\notag 	n^{-1}\sum_{i=1}^nN^{-\zeta}Z_{N,i}^{\zeta} -E(N^{-\zeta}Z_{N,i}^{\zeta}\mid \mathcal{F}_N) = O_p(n^{-1/2})
	\end{eqnarray}
	for $\zeta=1,2,3$, from which we can prove that $N^{-3}(\hat{\mu}^{(3)}_{N,PPS} - {\mu}^{(3)}_{N,PPS}) = O_p(n^{-1/2})$. Thus, we complete the proof of Lemma \ref{lemma: pps 1}.
\end{proof}

\begin{proof}[Proof of Theorem \ref{theorem: pps edgeworth origin}] 
	\iffalse
	Denote $W_N = \sigma_{N,PPS}^{-1}(Z_N-Y_N)$, and we can shown that $E(W_N\mid\mathcal{F}_N) = 0$ and $\mathrm{var}(W_N\mid \mathcal{F}_N)=1$. Let $\bX_N = (W_N,W_N^2)$ and $A(\bx) = \{g(\bx) - g(\bmu_N)\}/h(\bx)$, where $g(\bx) = \bx^{(1)}$, $\bmu_N=E(\bX_N\mid \mathcal{F}_N)$ and $h(\bx)^2 = n(n-1)^{-1}\{\bx^{(2)} - (\bx^{(1)})^2\}$, $\bx^{(i)}$ is the $i$-th component of $\bx$. Thus, it can be shown that $A(\bx)$ has three continuous derivatives and $\lVert \bX_N\rVert^3<\infty$ by (C\ref{cond: C9}) and a similar argument  for (\ref{eq: third moment}). It can be shown that $\hat{V}_{N,PPS}^{-1/2}(\hat{Y}_{N,PPS}-Y_N) = A(n^{-1}\sum_{i=1}^n\bX_{N,i})$, where $\bX_{N,i} = (W_{N,i},W_{N,i}^2)$ and $W_{N,i} = \sigma_{N,PPS}^{-1}(Z_{N,i} - Y_N)$. Thus, by Theorem 2.2 and a similar procedure as the first example in Section 2.6 of \citet{hall2013bootstrap}, we have
	\fi
	
	Rewrite
	\begin{eqnarray}
	\notag T_{N,PPS} &=& \hat{V}_{N,PPS}^{-1/2}(\hat{Y}_{N,PPS}-Y_N) \\
	\notag &=& n^{1/2}\left(n^{-1}\sum_{i=1}^{n}N^{-1}Z_{N,i}-N^{-1}Y_{N}\right) \\
	\notag & & \times\left\{n^{-1}\sum_{i=1}^{n}(N^{-1}Z_{N,i})^2-\left(n^{-1}\sum_{i=1}^{n}N^{-1}Z_{N,i}\right)^2\right\}^{-1/2},
	\end{eqnarray}
	where $N^{-1}Y_N=E(N^{-1}Z_{N,i}\mid\mathcal{F}_N)$. By \eqref{lemma: 1 1}, we have $E(|N^{-1}Z_{N,i}|^3\mid\mathcal{F}_N)<\infty$. Using the results of \citet{Hall1987Edgeworth}, as the distribution of $Z_{N,i}$ is non-lattice, we have
	\begin{equation}
	\hat{F}_{N,PPS}(z) = \Phi(z)  + \frac{{\mu}^{(3)}_{N,PPS}}{6\sqrt{n} \sigma_{N,PPS}^3}(2z^2+1)\phi(z) + o(n^{-1/2})\label{eq: th 2 0},\notag
	\end{equation}
	where $\sigma_{N,PPS}^2=E\{(Z_{N,i}-Y_N)^2\mid \mathcal{F}_N\}=\sum_{i=1}^{N}p_{N,i}(p_{N,i}^{-1}y_{N,i}-Y_{N})^2$ and $\mu^{(3)}_{N,PPS} = E\{(Z_{N,i}-Y_N)^3\mid \mathcal{F}_N\}=\sum_{i=1}^{N}p_{N,i}(p_{N,i}^{-1}y_{N,i}-Y_{N})^3$. Finally, according to Lemma \ref{lemma: pps 1}, we have shown Theorem \ref{theorem: pps edgeworth origin}.
\end{proof}

\begin{proof}[Proof of Theorem \ref{th: last one}]
	
	Mentioned that $\mathbb{P}_{PPS}(p_{N,a,i}=p_{N,k})=p_{N,k}$ for $i=1,\ldots,n$; $k=1,\ldots,N$ and $p_{N,a,1},\ldots,p_{N,a,n}$ are independent. For any $a$ such that $0\leq \delta\leq 8$,
	\begin{eqnarray}
	\notag E\left\{\left|n^{-1}\sum_{i=1}^nN^{-1}p_{N,a,i}^{-1}y_{N,a,i}^{\delta}\right|\mid\mathcal{F}_N\right\}\leq N^{-1}\sum_{i=1}^{N}|y_{N,i}|^{\delta}<\infty.
	\end{eqnarray}
	Thus, by SLLN,
	%By a similar argument made for Lemma \ref{lemma: pps 1}, we have 
	\begin{equation}
	n^{-1}\sum_{i=1}^nN^{-1}p_{N,a,i}^{-1}y_{N,a,i}^{\delta} \to N^{-1}\sum_{i=1}^{N}y_{N,i}^{\delta} \label{eq: last 1 1}
	\end{equation}
	with probability 1 for all $0\leq \delta\leq 8$.
	
	In the first step of our proposed bootstrap method, $y_{N,1}^*,\ldots,y_{N,N}^*$ are independently and identically distributed (i.i.d.) with $\mathbb{P}_*(y_{N,i}^*=y_{N,a,j})=\rho_{N,j}=p_{N,a,j}^{-1}\big(\sum_{\ell=1}^np_{N,a,\ell}^{-1}\big)^{-1}$ for $i=1,\ldots,N$; $j=1,\ldots,n$. Let $\mathbb{P}_{*}$ be the probability measure for the first step of the proposed bootstrap method conditional on the realized sample $\{y_{N,a,1},\ldots,y_{N,a,n}\}$. Then, we have
	\begin{eqnarray}
	\notag & & E_*\left\{N^{-1}\sum_{i=1}^N(y_{N,i}^*)^8\right\} \\
	&=& \sum_{i=1}^np_{N,a,i}^{-1}\left(\sum_{j=1}^np_{N,a,j}^{-1}\right)^{-1}y_{N,a,i}^8 \notag \\
	&=& \left(n^{-1}\sum_{i=1}^nN^{-1}p_{N,a,i}^{-1}\right)^{-1}n^{-1}\sum_{i=1}^{n}N^{-1}p_{N,a,i}^{-1}y_{N,a,i}^8 \notag \\
	&=& O_p(1), \label{eq: last 11}
	\end{eqnarray}
	from which, we get that	conditional on the series of realized samples,
	\begin{eqnarray}
	N^{-1}\sum_{i=1}^N(y_{N,i}^*)^8=O_p(1). \label{eq: last 12}
	\end{eqnarray}
	
	Recall that $C_N^* = \sum_{i=1}^nN_{a,i}^*p_{N,a,i}$, where $N_{a,i}^*$ is the number of repetitions of the $i$-th realized sample in the proposed bootstrap method. Next, we show
	\begin{eqnarray}
	C_N^* &=& 1+o_p(1) \label{eq: last 2}.
	\end{eqnarray} 
	Consider
	\begin{eqnarray}
	E_*(C_N^*) &=& \left(n^{-1}\sum_{i=1}^n N^{-1}p_{N,a,i}^{-1}\right)^{-1}, \label{eq: last 2.1} \\
	\mathrm{var}_*(C_N^*) &\leq&  N\left(\sum_{i=1}^n p_{N,a,i}^{-1}\right)^{-1}  \sum_{i=1}^np_{N,a,i}\notag \\
	&=& O\left(N^{-1} \left(n^{-1}\sum_{i=1}^n N^{-1}p_{N,a,i}^{-1}\right)^{-1}\right), \label{eq: last 2.2}
	\end{eqnarray}
	where the equality of (\ref{eq: last 2.2}) holds by (C\ref{cond: C4}).
	By \eqref{eq: last 2.1} and \eqref{eq: last 2.2}, we have shown \eqref{eq: last 2} according to \eqref{eq: last 1 1} with $\delta=0$.
	
	Similarly, we can show 
	\begin{equation}
	nN_{a,i}^*p_{N,a,i} = 1 + o_p(1) \label{eq: last 5.1}
	\end{equation}
	for $i=1,\ldots,n$.
	
	Let $((C_N^*)^{-1}p_{N,b,i}^*,y_{N,b,i}^*)$ be the quantities for the $i$-th selected element from the bootstrap finite population $\mathcal{F}_{N}^*$. Denote $Z_{N,i}^*=C_N^*(p_{N,b,i}^*)^{-1}y_{N,b,i}^*$ for $i=1,\ldots,n$. Then $\mathbb{P}_{PPS}^*\big(Z_{N,i}^* = C_N^*(p_{N,k}^*)^{-1}y_{N,k}^*\big) = (C_N^*)^{-1}p_{N,k}^*$ for $i=1,\ldots,n$; $k=1,\ldots,N$, where $\mathbb{P}_{PPS}^*$ is the counterpart of $\mathbb{P}_{PPS}$ conditional on the bootstrap finite population $\mathcal{F}_{N}^*$.
	
	Conditional on the bootstrap finite population $\mathcal{F}_{N}^*$, denote $F^*_{N,PPS}(z)=\mathbb{P}_{PPS}^*(T_{N,PPS}^*\leq z)$ as the distribution of  $T_{N,PPS}^*=(\hat{V}_{N,PPS}^*)^{-1/2}(\hat{Y}_{N,PPS}^*-Y_N^*)$, where $Y_N^* = \sum_{i=1}^Ny_{N,i}^*$, $\hat{Y}_{N,PPS}^*=n^{-1}\sum_{i=1}^{n}Z_{N,i}^*$ and
	${V}_{N,PPS}^* = n^{-2}\sum_{i=1}^n(Z_{N,i}^*-\bar{Z}_{N}^*)^2$ with $\bar{Z}_{N}^*=n^{-1}\sum_{i=1}^{n}Z_{N,i}^*=\hat{Y}_{N,PPS}^*$.
	%, and the probability for the element in the bootstrap finite population is  Recall that $\{p_{N,i}^*:i=1,\ldots,N\}$ consists of $N_{a,i}^*$ copies of $p_{N,a,i}$ for $i=1,\ldots,n$.
	
	Consider 
	\begin{eqnarray}
	E\left(n^{-1}\sum_{i=1}^nN^{-3}|Z_{N,i}|^3  \mid \mathcal{F}_N\right) &=& E(N^{-3}|Z_{N,i}|^3\mid \mathcal{F}_N) \notag \\
	&=& O(1), \label{eq: firstone} \\ 
	\mathrm{var}\left(n^{-1}\sum_{i=1}^nN^{-3}|Z_{N,i}|^3  \mid \mathcal{F}_N\right) &=& n^{-1}\mathrm{var}(N^{-3}|Z_{N,i}|^3\mid \mathcal{F}_N)\notag \\ 
	&\leq&  n^{-1}E(N^{-6}Z_{N,i}^6\mid \mathcal{F}_N)\notag \\
	&=&O(n^{-1}),\label{eq: secondone}
	\end{eqnarray}
	where the results of (\ref{eq: firstone}) and (\ref{eq: secondone}) are based on (\ref{lemma: 1 1}).
	
	Recall that $E_{**}(\cdot)$ is the expectation with respect to the sampling design conditional on the bootstrap finite population and $\{p_{N,k}^*:k=1,\ldots,N\}$ consists of $N_{a,i}^*$ copies of $p_{N,a,i}$ for $i=1,\ldots,n$. Consider
	\begin{eqnarray}
	\notag & & E_{**}\left(|N^{-1}Z_{N,i}^*|^3\right) \\
	&=& N^{-3}\sum_{i=1}^N(C_N^*)^{-1}p_{N,i}^* |C_N^*(p_{N,i}^*)^{-1}y_{N,i}^*|^3 \notag \\ 
	&=& N^{-3}(C_N^*)^2\sum_{i=1}^N(p_{N,i}^*)^{-2}|y_{N,i}^*|^3\notag \\ 
	&=& N^{-3}(C_N^*)^2\sum_{i=1}^nN_{a,i}^*p_{N,a,i}^{-2}|y_{N,a,i}|^3\notag\\
	&=& n^{-1}\sum_{i=1}^n|N^{-1}p_{N,a,i}^{-1}y_{N,a,i}|^3\{1+o_p(1)\} \notag\\
	&=&O_p(1),\label{eq: last 3}
	\end{eqnarray}
	where the fourth equality holds by (\ref{eq: last 5.1}), and last equality holds by Lemma \ref{lemma: pps 1}, (C\ref{cond: C4}), (\ref{eq: firstone}) and (\ref{eq: secondone}).

	Consider the characteristic function of $N^{-1}Z_{N,i}$ and $N^{-1}Z_{N,i}^*$. Specifically, the characteristic function of $N^{-1}Z_{N,i}$ is  
	\begin{eqnarray}
	\notag \psi_{Z,N}(t) &=& \sum_{i=1}^Np_{N,i}\exp(\iota t N^{-1}y_{N,i}/p_{N,i}) %\\
	%&=&  \sum_{i=1}^Np_{N,i}\cos( t y_{N,i}/p_{N,i}) + \iota\sum_{i=1}^Np_{N,i}\sin( t y_{N,i}/p_{N,i}),
	\end{eqnarray}
	and the characteristic function of $N^{-1}Z_{N,i}^*$, conditional on the bootstrap finite population $\mathcal{F}_N^*$, is 
	\begin{eqnarray}
	\psi_{Z,N}^*(t)&=&\sum_{i=1}^N(C_N^*)^{-1}p_{N,i}^*\exp(\iota t N^{-1}C_N^*y^*_{N,i}/p^*_{N,i})\notag %\\ 
	%&=& (C_N^*)^{-1}\sum_{i=1}^nN_i^*p_{N,a,i}\exp(\iota t C_N^*y_{N,a,i}/p_{N,a,i})\notag \\
	%&=& (C_N^*)^{-1}\sum_{i=1}^nN_i^*p_{N,a,i}\cos(t C_N^*y_{N,a,i}/p_{N,a,i}) \\
	%& & + \iota (C_N^*)^{-1}\sum_{i=1}^nN_i^*p_{N,a,i}\sin(t C_N^*y_{N,a,i}/p_{N,a,i})\notag.
	\end{eqnarray}
	
	To show that the distribution of $Z_{N,i}^*$ is non-lattice in probability conditional on the bootstrap finite population $\mathcal{F}_N^*$, it is enough to show that, for any fixed $t_0>0$,
	\begin{eqnarray}\label{eq:nonlattice}
	\sup_{|t|\leq t_0}\left|\psi_{Z,N}^*(t)-\psi_{Z,N}(t)\right|\to 0
	\end{eqnarray}
	in probability as $n\to\infty$. By remarking that
	\begin{eqnarray}
	\notag & & \sup_{|t|\leq t_0}\left|\psi_{Z,N}^*(t)-\psi_{Z,N}(t)\right|\\
	\notag & \leq & \sup_{|t|\leq t_0}\left|\psi_{Z,N}^*(t)-\sum_{i=1}^N(C_N^*)^{-1}p_{N,i}^*\exp(\iota t N^{-1}y^*_{N,i}/p^*_{N,i})\right| \\
	& & + \sup_{|t|\leq t_0}\left|\sum_{i=1}^N(C_N^*)^{-1}p_{N,i}^*\exp(\iota t N^{-1}y^*_{N,i}/p^*_{N,i})-\psi_{Z,N}(t)\right|.
	\end{eqnarray}
	First,
	\begin{eqnarray}
	\notag & & \sup_{|t|\leq t_0}\left|\psi_{Z,N}^*(t)-\sum_{i=1}^N(C_N^*)^{-1}p_{N,i}^*\exp(\iota t N^{-1}y^*_{N,i}/p^*_{N,i})\right| \\
	\notag & \leq & \sup_{|t|\leq t_0}\sum_{i=1}^N(C_N^*)^{-1}p_{N,i}^*\left|\exp(\iota t N^{-1}C_N^*y^*_{N,i}/p^*_{N,i})-\exp(\iota t N^{-1}y^*_{N,i}/p^*_{N,i})\right| \\
	\notag & \leq & t_0N^{-1}\sum_{i=1}^N(C_N^*)^{-1}\left|y^*_{N,i}\right|\left|C_N^*-1\right| \to 0
	\end{eqnarray}
	in probability as $n\to\infty$.
	Second,
	\begin{eqnarray}
	\notag & & \sup_{|t|\leq t_0}\left|\sum_{i=1}^N(C_N^*)^{-1}p_{N,i}^*\exp(\iota t N^{-1}y^*_{N,i}/p^*_{N,i})-\psi_{Z,N}(t)\right| \\
	\notag & = & \sup_{|t|\leq t_0}\bigg|(C_N^*)^{-1}\sum_{i=1}^nN_i^*p_{N,a,i}\exp(\iota t N^{-1}y_{N,a,i}/p_{N,a,i}) \\
	\notag & & ~~~~~~-\sum_{i=1}^Np_{N,i}\cos( t N^{-1}y_{N,i}/p_{N,i}) - \iota\sum_{i=1}^Np_{N,i}\sin( t N^{-1}y_{N,i}/p_{N,i})\bigg| \\
	\notag & \leq & \bigg| (C_N^*)^{-1}\sum_{i=1}^nN_i^*p_{N,a,i}\cos(t N^{-1}y_{N,a,i}/p_{N,a,i}) \\
	\notag & & ~~~~~~~~~~~~~~~~~~~~~~~~~~~~~~~~~~-\sum_{i=1}^Np_{N,i}\cos( t N^{-1}y_{N,i}/p_{N,i})\bigg| \\
	\notag & & + \bigg|(C_N^*)^{-1}\sum_{i=1}^nN_i^*p_{N,a,i}\sin(t N^{-1}y_{N,a,i}/p_{N,a,i}) \\
	\notag & & ~~~~~~~~~~~~~~~~~~~~~~~~~~~~~~~~~~-\sum_{i=1}^Np_{N,i}\sin( t N^{-1}y_{N,i}/p_{N,i})\bigg|.
	\end{eqnarray}
	
	It suffices to show that 
	\begin{eqnarray}
	&&(C_N^*)^{-1}\sum_{i=1}^nN_i^*p_{N,a,i}\cos(t N^{-1}y_{N,a,i}/p_{N,a,i}) \notag \\
	&=& \sum_{i=1}^Np_{N,i}\cos( t N^{-1}y_{N,i}/p_{N,i})+o_p(1), \label{eq: pps 1.1}\\
	&&(C_N^*)^{-1}\sum_{i=1}^nN_i^*p_{N,a,i}\sin(t N^{-1}y_{N,a,i}/p_{N,a,i})\notag \\ 
	&=& \sum_{i=1}^Np_{N,i}\sin( t N^{-1}y_{N,i}/p_{N,i})+o_p(1). \label{eq: pps 1.2}
	\end{eqnarray} 
	We can show (\ref{eq: pps 1.1}) since (\ref{eq: last 2}) and (\ref{eq: last 5.1}) hold, $\lvert\cos(t y_{N,a,i}/p_{N,a,i})\rvert\leq 1$ and $Z_{N,1},\ldots,Z_{N,n}$ are independent and identically distributed random variables. Similarly, we can show (\ref{eq: pps 1.2}).
	By (\ref{eq: last 11}), (C\ref{cond: PPS1}), and the fact that the distribution of $Z_{N,i}^*$ is non-lattice in probability, we have the following result by \citet{Hall1987Edgeworth}:
	\begin{equation}
	\hat{F}_{N,PPS}^*(z) = \Phi(z)  + \frac{{\mu}^{(3)*}_{N,PPS}}{6\sqrt{n} (\sigma_{N,PPS}^*)^3}(2z^2+1)\phi(z) + o_p(n^{-1/2})\label{eq: last 5}
	\end{equation} 
	uniformly in $z\in\mathbb{R}$, where $\mu_{N,PPS}^{(3)*} = E_{**}\{(Z_{N,i}^* - Y_N^*)^3\}$.

	Based on a similar argument made for (\ref{eq: last 3}), we have 
	\begin{eqnarray}
	N^{-3}(\mu_{N,PPS}^{(3)*} - \hat{\mu}_{N,PPS}^{(3)}) &=&o_p(1) ,\label{eq: last 6.1}\\
	N^{-2}\{(\sigma^*_{N,PPS})^2 - s^2_{N,PPS}\} &=& o_p(1) \label{eq: last 6.2}.	
	\end{eqnarray}
	
	Together with Lemma \ref{lemma: pps 1},  (\ref{eq: last 5}) to (\ref{eq: last 6.2}) and (C\ref{cond: C9}), we have proved Theorem \ref{th: last one}. 
\end{proof}

\subsection{Design-unbiased estimates for the two-stage sampling designs}\label{appd: B}
For the two-stage sampling designs in the second simulation study,  Poisson sampling and PPS sampling are used in the first stage, and an SRS design is independently conducted within each selected cluster in the second stage. For the sampling designs in the first stage, denote $\pi_i = n_1N_iN^{-1}$ to be the first-order inclusion probability for Poisson sampling, and $p_i=N_iN^{-1}$ to be the selection probability for PPS sampling. In this section, we comply to the notation convention in Section 1.2.8 of \citet{fuller2009sampling} to discuss the variance estimation under the two-stage sampling designs.  %Without loss of generality, denote  $ U_N = \{y_{i,j}:h=1,\ldots,C; j=1,\ldots,N_i\}$ to highlight its hierarchical structure.

For the two-stage sampling design, where Poisson sampling is applied in the first stage, the design-based estimator of $\bar{Y}$ is
$$
\tilde{Y} = N^{-1}\sum_{i\in A}{\pi_i}^{-1}\hat{Y}_{i,\cdot} = n_1^{-1}\sum_{i\in A}N_i^{-1}\hat{Y}_{i,\cdot},
$$
where $A$ is the index of the selected clusters in the first stage, $\hat{Y}_{i,\cdot} = N_in_2^{-1}\sum_{j\in B_i}y_{i,j}$ is an design-unbiased estimate of the cluster total $Y_{i,\cdot} = \sum_{j=1}^{N_i}y_{i,j}$ under the SRS design, and $B_i$ is the index set of the sample within the selected cluster indexed by $i$.  It can be shown that the same form holds when PPS sampling is used in the first stage.

First, we discuss the variance estimator of $\tilde{Y}$ for the two-stage sampling design where Poisson sampling is used in the first stage. As shown in Section 1.2.8 by \citet{fuller2009sampling}, the variance of $\tilde{Y}$ can be decomposed into two parts. That is,
\begin{equation}
\mathrm{var}(\tilde{Y}\mid  U_N) = V_1 + V_2,\label{eq: Poi decomps}
\end{equation}
where $V_1 = E[\mathrm{var}\{\tilde{Y}\mid (A, U_N)\}\mid U_N ]$ and $V_2 = \mathrm{var}[E\{\tilde{Y}\mid (A, U_N)\}\mid U_N ].$

Consider
\begin{eqnarray}
\mathrm{var}\{\tilde{Y}\mid (A, U_N)\} &=& N^{-2}\sum_{i\in A}{\pi_i^{-2}}\mathrm{var}\{\hat{Y}_{i,\cdot}\mid(A, U_N)\}\label{eq: V1}
,\end{eqnarray}
where the equality holds since the SRS design is independently conducted within each selected cluster, $\mathrm{var}\{\hat{Y}_{i,\cdot}\mid(A, U_N)\} = N_i(N_i-n_2)n_2^{-1}S_{i}^2$, $S_i^2 = (N_i-1)^{-1}\sum_{j=1}^{N_i}(y_{i,j} - \bar{Y}_{i,\cdot})^2$ is the finite population variance within the $i$-th cluster, and $\bar{Y}_{i,\cdot} = N_i^{-1}Y_{i,\cdot}$ is the finite population mean of the $i$-th cluster. Since the sample variance $s_i^2 = (n_2-1)^{-1} \sum_{j\in B_i}(y_{i,j} - \tilde{Y}_{i,\cdot})^2$ is an unbiased estimator of $S_i^2$, where $\tilde{Y}_{i,\cdot} = N_i^{-1}\hat{Y}_{i,\cdot}$ is the estimated cluster mean, the first term of (\ref{eq: Poi decomps}) can be estimated by
\begin{equation}
\hat{V}_1 = N^{-2}\sum_{i\in A}\pi_i^{-2}\hat{V}\{\hat{Y}_{i,\cdot}\mid(A, U_N)\}, \label{eq: Poi first part apprx}
\end{equation}
where $\hat{V}\{\hat{Y}_{i,\cdot}\mid(A, U_N)\} = N_i(N_i-n_2)n_2^{-1}s_{i}^2$.

For the second term of (\ref{eq: Poi decomps}), consider
\begin{equation}
E\{\tilde{Y}\mid (A, U_N)\} = N^{-1}\sum_{i\in A}\pi_i^{-1}Y_{i,\cdot}.\notag
\end{equation}
Since Poisson sampling is used in the first stage, we have
\begin{eqnarray}
\mathrm{var}[E\{\tilde{Y}\mid (A, U_N)\}\mid U_N ]&=&N^{-2} \sum_{i=1}^H\pi_{i}^{-1}(1-\pi_i)Y_{i,\cdot}^2,\label{eq: Poi 2}
\end{eqnarray}
which can be estimated by
$
N^{-2} \sum_{i\in A}\pi_{i}^{-2}(1-\pi_i)Y_{i,\cdot}^2.
$
Notice that
\begin{eqnarray}
E \{\hat{Y}_{i,\cdot}^2\mid (A, U_N)\} &=& [E \{\hat{Y}_{i,\cdot}\mid (A, U_N)\}]^2 + \mathrm{var}\{\hat{Y}_{i,\cdot}\mid (A, U_N)\}\notag \\
&=& Y_{i,\cdot}^2 + \mathrm{var}\{\hat{Y}_{i,\cdot}\mid (A, U_N)\}.\label{eq: Poi 3}
\end{eqnarray}

By (\ref{eq: Poi 2}) and (\ref{eq: Poi 3}) and the fact that $s_i^2$ is an unbiased estimator of $S_i^2$, the second term of (\ref{eq: Poi decomps}) can be estimated by
\begin{equation}
\hat{V}_2 = N^{-2}\sum_{i\in A}\pi_i^{-2}(1-\pi_i)[\hat{Y}_{i,\cdot}^2 - \hat{V}\{\hat{Y}_{i,\cdot}\mid(A, U_N)\}]. \label{eq: Poi second part apprx}
\end{equation}

By (\ref{eq: Poi first part apprx}) and (\ref{eq: Poi second part apprx}), the variance of $\tilde{Y}$ can be estimated by
$$
\tilde{V} = N^{-2}\left[\sum_{i\in A}\pi_{i}^{-2}(1-\pi_i)\hat{Y}_{i,\cdot}^2 + \sum_{i\in A}\pi_i^{-1}\hat{V}\{\hat{Y}_{i,\cdot}\mid(A, U_N)\}\right],
$$
when Poisson sampling is used in the first stage.

Next, we use variance decomposition (\ref{eq: Poi decomps}) to derive the variance estimator of $\tilde{Y}$ under the two-stage sampling design where  PPS sampling is applied in the first stage. The result shown in (\ref{eq: V1}) holds, and we can still use (\ref{eq: Poi first part apprx}) to approximate $V_1$.

Consider
\begin{eqnarray}
&&\mathrm{var}[E\{\tilde{Y}\mid (A, U_N)\}\mid U_N ]\notag \\&=&N^{-2} n_1^{-1}(n_1-1)^{-1}\left(\sum_{i\in A}Z_{i,\cdot}^2 - n_1\bar{Z}^2 \right),\label{eq: PPS 2}
\end{eqnarray}
where the equality holds by the property of PPS sampling, $Z_{i,\cdot} = Y_{i,\cdot}p_i^{-1}$ and $\bar{Z}=n_1^{-1}\sum_{i\in A}Z_{i,\cdot}$. Based on (\ref{eq: Poi 3}), we can estimate $Z_{i,\cdot}^2$ by
$$
p_i^{-2}[\hat{Y}_{i,\cdot}^2 - \hat{V}\{\hat{Y}_{i,\cdot}\mid(A, U_N)\}].
$$

Consider
\begin{eqnarray}
E\left\{\tilde{Z}^2\mid(A, U_N)\right\} &=& \bar{Z}^2 + \mathrm{var}\{\tilde{Z}\mid(A, U_N)\}\notag\\
&=& \bar{Z}^2 + n_1^{-2}\sum_{i\in A}p_i^{-2}\mathrm{var}\{\hat{Y}_{i,\cdot}\mid(A, U_N)\},\notag
\end{eqnarray}
where $\tilde{Z} = n_1^{-1}\sum_{i\in A}\hat{Y}_{i,\cdot}p_i^{-1}$. Thus,
we can estimate $\bar{Z}^2$ by
$$
\tilde{Z}^2  - n_1^{-2}\sum_{i\in A}p_i^{-2}\hat{V}\{\hat{Y}_{i,\cdot}\mid(A, U_N)\}.
$$

By (\ref{eq: Poi first part apprx}), (\ref{eq: PPS 2}) and the two approximations above, we can obtain the variance estimate of $\tilde{Y}$ by
\begin{eqnarray}
\tilde{V} &=& N^{-2}n_1^{-1}(n_1-1)^{-1}\sum_{i\in A}p_i^{-2}[\hat{Y}_{i,\cdot}^2 + (n_1-2)n_1^{-1}\hat{V}\{\hat{Y}_{i,\cdot}\mid(A, U_N)\}] - N^{-2}\tilde{Z}^2 \notag
\end{eqnarray}
for the two-stage sampling design with PPS sampling is used in the first stage.%\bibliographystyle{dcu}
\bibliographystyle{dcu}
\bibliography{paper-ref_should}
% AOS,AOAS: If there are supplements please fill:
%\begin{supplement}[id=suppA]
%  \sname{Supplement A}
%  \stitle{Title}
%  \slink[doi]{10.1214/00-AOASXXXXSUPP}
%  \sdatatype{.pdf}"
%  \sdescription{Some text}
%\end{supplement}

\end{document}